    \pgfplotsset{
        compat=1.14,
        width=8cm,
    }
\numberwithin{equation}{section}
\newtheorem{theorem}{Theorem}[section]
\newtheorem{corollary}[theorem]{Corollary}
\newtheorem{lemma}[theorem]{Lemma}
\newtheorem{proposition}[theorem]{Proposition}
\newtheorem{conjecture}[theorem]{Conjecture}
\newtheorem*{claim}{Claim}
\newtheorem*{claim 1}{Claim 1}
\newtheorem*{claim 2}{Claim 2}
\theoremstyle{definition}
\newtheorem{definition}[theorem]{Definition}
\newtheorem{remark}[theorem]{Remark}
\newtheorem{example}[theorem]{Example}
\newtheorem*{acknowledgment}{Acknowledgment}
\newcommand{\C}{\mathbb{C}}
\newcommand{\R}{\mathbb{R}}
\newcommand{\Z}{\mathbb{Z}}
\newcommand{\N}{\mathbb{N}}
\newcommand{\T}{\mathbb{T}}
\newcommand{\fD}{\mathfrak{D}}
\newcommand{\ud}{\mathrm{d}}
\def\inn#1#2{\langle#1,#2\rangle}
\newcommand{\supp}{\mathrm{supp}\,}
\renewcommand{\angle}{\measuredangle}
\begin{document}

\title[Oscillatory integral operators of arbitrary signature]{Sharp $L^p$ estimates for oscillatory integral operators of arbitrary signature}

\author[J. Hickman]{Jonathan Hickman}
\address{School of Mathematics, The University of Edinburgh, Edinburgh EH9 3JZ, UK}
\email{jonathan.hickman@ed.ac.uk}

\author[M. Iliopoulou]{Marina Iliopoulou}
\address{School of Mathematics, Statistics and Actuarial Science, University of Kent, Canterbury CT2 7PE, UK.}
\email{m.iliopoulou@kent.ac.uk}

\subjclass[2020]{42B20}
\keywords{Oscillatory integrals, H\"ormander operators, polynomial partitioning}

\maketitle


\begin{abstract} The sharp range of $L^p$-estimates for the class of H\"ormander-type oscillatory integral operators is established in all dimensions under a general signature assumption on the phase. This simultaneously generalises earlier work of the authors and Guth, which treats the maximal signature case, and also work of Stein and Bourgain--Guth, which treats the minimal signature case. 
\end{abstract}

\section{Introduction} 




\subsection{Main results} This article concerns $L^p$ bounds for oscillatory integral operators that are natural variable coefficient generalisations of the Fourier extension operator associated to surfaces of non-vanishing Gaussian curvature. To describe the basic setup, for $d \geq 1$ let $B^d$ denote the unit ball in $\R^d$ and fix a dimension $n \geq 2$. Suppose $a \in C^{\infty}_c(\R^n \times \R^{n-1})$ is supported in $B^n \times B^{n-1}$ and consider a smooth function $\phi \colon B^n \times B^{n-1} \to \R$ which satisfies the following conditions:
\begin{itemize}
\item[H1)] $\mathrm{rank}\, \partial_{\omega x}^2 \phi(x;\omega) = n-1$ for all $(x;\omega) \in B^n \times B^{n-1}$.
\item[H2)] Defining the map $G \colon B^n \times B^{n-1} \to S^{n-1}$ by $G(x;\omega) := \frac{G_0(x;\omega)}{|G_0(x;\omega) |}$ where
\begin{equation*}
G_0(x;\omega) := \bigwedge_{j=1}^{n-1} \partial_{\omega_j} \partial_x\phi(x;\omega),
\end{equation*}
the curvature condition
\begin{equation}\label{non vanishing curvature}
\det \partial^2_{\omega \omega} \langle \partial_x\phi(x;\omega),G(x; \omega_0)\rangle|_{\omega = \omega_0} \neq 0
\end{equation}
holds for all $(x; \omega_0) \in \mathrm{supp}\,a$.
\end{itemize}
For any $\lambda > 1$ let $a^{\lambda}(x;\omega) := a(x/\lambda;\omega)$ and $\phi^{\lambda}(x;\omega) := \lambda\phi(x/\lambda;\omega)$ and define the operator $T^{\lambda}$ by
\begin{equation}\label{Hormander operator}
T^{\lambda}f(x) :=  \int_{B^{n-1}} e^{2 \pi i \phi^{\lambda}(x; \omega)} a^{\lambda}(x;\omega) f(\omega)\,\ud \omega
\end{equation}
for all integrable $f \colon B^{n-1} \to \C$. In this case $T^{\lambda}$ is said to be a \emph{H\"ormander-type operator}. 

A prototypical example is given by the choice of phase 
\begin{equation*}
\phi_{\mathrm{ell}}(x;\omega) := \langle x',\omega \rangle +  x_n \cdot  \frac{1}{2}|\omega|^2, \qquad x = (x',x_n) \in \R^{n-1} \times \R;
\end{equation*}
in this case \eqref{Hormander operator} is the well-known extension operator $E_{\mathrm{ell}}$ associated to the elliptic paraboloid (with the additional cutoff function $a^{\lambda}$ localising the operator to a spatial ball of radius $\lambda$): see Example~\ref{prototypical example} below.

Operators of the form \eqref{Hormander operator} were introduced by H\"ormander \cite{Hormander1973} as a simultaneous generalisation of Fourier extension operators and operators which arise in the Carleson--Sj\"olin approach to the study of Bochner--Riesz means \cite{Carleson1972}. The $L^p$ theory of H\"ormander-type operators has been investigated in a number of articles over the last few decades: see, for instance, \cite{Hormander1973, Stein1986, Bourgain1991, Bourgain1995, Minicozzi1997, Wisewell2005, Bennett2006, Lee2006, Bourgain2011, Bennett2014, GHI2019} and references therein. A recent survey of the history of the problem can be found in the introductory section of \cite{GHI2019}.

It has been observed that, in general, the $L^p$ mapping properties of $T^{\lambda}$ are determined by finer geometric conditions on the phase than H1) and H2) above \cite{Bourgain1991, Bourgain1995, Minicozzi1997, Wisewell2005}. In particular, in addition to the Hessian in \eqref{non vanishing curvature} having full rank, the behaviour of the operator can often depend on the \emph{signature} of the matrix.

\begin{definition}
Suppose $\phi$ is a phase which satisfies H1) and H2) above. The eigenvalues of the symmetric matrix 
\begin{equation*}
\partial^2_{\omega \omega} \langle \partial_x\phi(x;\omega),G(x;\omega_0)\rangle|_{\omega = \omega_0}
\end{equation*}
can be defined as continuous functions on $B^n \times B^{n-1}$ which are bounded away from 0. The signature of $\phi$ is defined to be the quantity $\mathrm{sgn} (\phi) := |\sigma_+ - \sigma_-|$ where $\sigma_+$ and $\sigma_-$ are, respectively, the number of positive and the number of negative eigenvalue functions.
\end{definition}

The aim of this article is to prove $L^p$ estimates for general H\"ormander-type operators, with a range of $p$ determined by the signature of the phase. 

\begin{theorem}\label{main theorem} Suppose $T^{\lambda}$ is a H\"ormander-type operator. For all $\varepsilon  > 0$ the \emph{a priori} estimate\footnote{Given a (possibly empty) list of objects $L$, for real numbers $A_p, B_p \geq 0$ depending on some Lebesgue exponent $p$ the notation $A_p \lesssim_L B_p$ or $B_p \gtrsim_L A_p$ signifies that $A_p \leq CB_p$ for some constant $C = C_{L,n,p} \geq 0$ depending on the objects in the list, $n$ and $p$. In addition, $A_p \sim_L B_p$ is used to signify that $A_p \lesssim_L B_p$ and $A_p \gtrsim_L B_p$.} 
\begin{equation}\label{Hormander estimate}
\|T^{\lambda}f\|_{L^p(\R^n)} \lesssim_{\varepsilon, \phi, a} \lambda^{\varepsilon}\|f\|_{L^{p}(B^{n-1})}
\end{equation}
holds whenever $p$ satisfies
\begin{equation}\label{main theorem 1}
p \geq \left\{\begin{array}{ll}
  \displaystyle 2 \cdot \frac{\mathrm{sgn} (\phi) + 2(n+1)}{\mathrm{sgn} (\phi) + 2(n-1)} & \textrm{if $n$ is odd} \\[8pt]
\displaystyle 2 \cdot \frac{\mathrm{sgn} (\phi) + 2n+3}{\mathrm{sgn} (\phi) + 2n-1} & \textrm{if $n$ is even} 
\end{array} \right. .
\end{equation}
\end{theorem}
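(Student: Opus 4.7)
The strategy is the polynomial partitioning / induction-on-scales framework pioneered by Guth for the Fourier extension operator, in the variable-coefficient form developed by the present authors with Guth in \cite{GHI2019}, but with a refined broad--narrow analysis whose multilinearity parameter is tuned to the signature $s := \mathrm{sgn}(\phi)$. When $s = n-1$ (the maximal case) the argument must recover the GHI bounds, and when $s \in \{0,1\}$ (the minimal case, which subsumes Carleson--Sj\"olin/Bochner--Riesz-type operators) it must recover the classical bounds of Stein and of Bourgain--Guth; the new content is the intermediate regime.

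First I would reduce $\phi$ to a normal form in which the Hessian matrix appearing in \eqref{non vanishing curvature} is approximately diagonal with $\sigma_+$ entries $+1$ and $\sigma_-$ entries $-1$, so that one may work with a model phase of the shape $\langle x',\omega\rangle + x_n Q_s(\omega) + O(|\omega|^3)$, where $Q_s$ is a quadratic form of signature $(\sigma_+,\sigma_-)$ and $x = (x',x_n) \in \R^{n-1} \times \R$. The overarching induction is on the radius $R \leq \lambda$ at which one analyses the operator, with a secondary induction on the dimension $n$ (and, implicitly, on $s$).

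At each inductive stage one performs a broad--narrow decomposition at a bounded scale $K^{-1}$: on each ball $B_{K^2}$, either the operator is concentrated near a low-dimensional affine subspace of the frequency domain (\emph{narrow}), or its mass distributes across a multilinearly transverse collection of $k$ caps (\emph{broad}). I expect the optimal choice of $k$ to decrease with $s$, since a larger signature produces more genuinely curved directions and fewer caps are needed to achieve transversality. The broad piece is controlled by a variable-coefficient multilinear Kakeya/restriction estimate in the spirit of Bennett--Carbery--Tao, adapted as in \cite{Bennett2014, GHI2019}. The narrow piece is reduced to a H\"ormander operator on a lower-dimensional parameter space with a smaller effective signature, to which the secondary induction applies. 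Polynomial partitioning of a fixed degree $D$ is then applied to $T^\lambda f$ in physical space, splitting $B_R$ into cells (to which induction-on-scale is applied) and a thin wall $W$; the wall is handled by the transverse equidistribution analysis of wavepackets tangent to the vanishing variety $Z(P)$ developed in \cite{GHI2019}.

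The main obstacle, in my view, is the bookkeeping required to close the induction. The exponent arising from the polynomial partitioning, the exponent contributed by the $k$-linear broad estimate, and the exponent losses incurred on the wall all depend on $s$, $k$ and the depth of the iteration, and they must align exactly to produce the sharp values in \eqref{main theorem 1}. In particular, the parity dichotomy in \eqref{main theorem 1} strongly suggests that a signature-sensitive extremiser---heuristically an operator whose wavepackets concentrate on a roughly $\lfloor (n-1+s)/2 \rfloor$-dimensional "flat" configuration of tubes---saturates the bound, and the proof must both detect this configuration in the narrow analysis and reduce it to a lower-dimensional instance of the theorem. Balancing the narrow reduction against the broad multilinear estimate at the optimal $k$, while keeping all induction losses subpolynomial in $\lambda$, is where I would expect the real technical work to lie.
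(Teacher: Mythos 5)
Your skeleton is the right one — reduction to a normal form, a broad--narrow decomposition, $k$-broad estimates proved by polynomial partitioning, and a wall analysis via transverse equidistribution — but the proposal misses both places where the signature actually enters the argument, and the substitutes you describe would fail. First, you invoke the transverse equidistribution analysis "developed in \cite{GHI2019}" on the wall. For non-maximal signature that estimate is simply false: already for $Q(\omega)=\omega_1\omega_2$ in $\R^3$ with wave packets aligned along $V=\langle \vec{e}_1\rangle^{\perp}$, the frequency support of $E_Qf$ lies along $A_{\mathrm{hyp}}\times\R$ with $V^{\perp}\subseteq A_{\mathrm{hyp}}\times\R$, so the uncertainty principle forces equidistribution \emph{along} $V$ rather than transverse to it, and the mass can concentrate in an $O(1)$-neighbourhood of $V$ (this is exactly Bourgain's counterexample mechanism). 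The paper's replacement is a \emph{partial} transverse equidistribution estimate (Lemma~\ref{trans eq lem 1}): equidistribution holds only in a subspace $V'$ of $V^{\perp}$, constructed essentially as $V_{\mathrm{sl}}^{\perp}\cap(\mathrm{I}_{n-1,\sigma}V_{\mathrm{sl}})^{\perp}$, whose dimension is bounded below by $\mu(n,\sigma,\dim V)=\max\{n-2\dim V+1,\tfrac{n+1+\sigma}{2}-\dim V,0\}$ rather than $n-\dim V$. This exponent is what produces the signature-dependent threshold $\bar p(n,\sigma,k)$ in Theorem~\ref{k-broad theorem}; without it the induction in the partitioning argument does not close at the claimed exponents.

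Second, your narrow analysis — reducing to a H\"ormander operator on a lower-dimensional parameter space "with a smaller effective signature" and running a secondary induction on dimension — is not viable in general: when $f$ is supported near an affine slice $A$, the corresponding frequency slice $\Sigma_A$ of $\bbH^{n-1,\sigma}$ can be an entire \emph{linear subspace} (of dimension up to $\tfrac{n-1-\sigma}{2}$), so there is no lower-dimensional curved operator to induct on. What the paper does instead is prove a signature-dependent $\ell^p$-decoupling inequality (Proposition~\ref{dec prop}): by Lemma~\ref{rest form lem} each slice has at least $d-1-\nu(n,\sigma,d)$ principal curvatures bounded away from zero, Bourgain--Demeter decoupling is applied in those directions and only trivial Plancherel-plus-interpolation decoupling in the remaining flat directions, yielding the loss $\delta^{-e(n,\sigma,d)(1/2-1/p)}$ that feeds into the Bourgain--Guth passage from broad to linear estimates (Proposition~\ref{proposition Bourgain Guth}). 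Relatedly, your expectation that the optimal multilinearity $k$ decreases with the signature is incorrect: the optimal choice in the paper is $k_*=\tfrac{n+1}{2}$ ($n$ odd) or $\tfrac{n+2}{2}$ ($n$ even), independent of $\sigma$; the signature enters only through $\bar p(n,\sigma,k)$ and $e(n,\sigma,k-1)$, whose interplay produces the exponents in \eqref{main theorem 1}.
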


The `extreme' cases of this result already appear in the literature:\medskip

\noindent\underline{Minimal $\sigma$.} Stein \cite{Stein1986} and Bourgain--Guth \cite{Bourgain2011} showed that \textit{all} H\"ormander-type operators satisfy \eqref{Hormander estimate} for\footnote{More precisely, Stein \cite{Stein1986} proved a stronger $L^2 \to L^p$ bound with no $\varepsilon$-loss in \textit{all} dimensions for $p \geq 2 \cdot \frac{n+1}{n-1}$. The larger range of exponents in the even dimensional case was later obtained by Bourgain--Guth \cite{Bourgain2011}.}
\begin{equation}\label{minimal signature range}
p \geq \left\{\begin{array}{ll}
  \displaystyle 2 \cdot \frac{n+1}{n-1} & \textrm{if $n$ is odd} \\[8pt]
\displaystyle 2 \cdot \frac{n+2}{n} & \textrm{if $n$ is even} 
\end{array} \right. .
\end{equation}
This yields Theorem~\ref{main theorem} in the special case where the signature is minimal (so that $\mathrm{sgn} (\phi) = 0$ if $n$ is odd and $\mathrm{sgn} (\phi) = 1$ if $n$ is even).\medskip

\noindent\underline{Maximal $\sigma$.}  On the other hand, if $\mathrm{sgn} (\phi) = n-1$, then it was shown by Lee \cite{Lee2006} for $n = 3$ (see also \cite{Bourgain2011}) and by Guth and the authors \cite{GHI2019} for $n \geq 4$ that \eqref{Hormander estimate} holds for \begin{equation*}
p \geq \left\{\begin{array}{ll}
  \displaystyle 2 \cdot \frac{3n+1}{3n-3} & \textrm{if $n$ is odd} \\[8pt]
\displaystyle 2 \cdot \frac{3n+2}{3n-2} & \textrm{if $n$ is even} 
\end{array} \right. ,
\end{equation*}
agreeing with the range of exponents in \eqref{main theorem 1}.\medskip

Theorem~\ref{main theorem} gives new bounds away from these extremes. In particular, in all other cases the previous best known range of exponents is \eqref{minimal signature range}, arising from the work of Stein \cite{Stein1986} and Bourgain--Guth \cite{Bourgain2011}. If $0 < \mathrm{sgn} (\phi) < n-1$ for $n$ odd or $1 < \mathrm{sgn} (\phi) < n-1$ for $n$ even, then \eqref{main theorem 1} provides a strictly larger range than \eqref{minimal signature range}. 

\subsection{Sharpness} An interesting feature of the result is that it is sharp for specific choices of operator, in the following sense.

\begin{proposition}\label{lin nec prop} For every dimension $n \geq 2$ and every $0 \leq \sigma \leq n-1$ such that $n-1 - \sigma$ is even, there exists a H\"ormander-type operator with $\mathrm{sgn}(\phi) = \sigma$ for which \eqref{Hormander estimate} fails whenever $p$ does not satisfy \eqref{main theorem 1}.
\end{proposition}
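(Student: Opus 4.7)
The plan is to establish the proposition by explicit construction. For each pair $(n,\sigma)$ satisfying the hypotheses, I would exhibit a Hörmander-type phase $\phi_\sigma$ of signature $\sigma$ together with a sequence of test inputs $f_\sigma^{(\lambda)}$ along which the estimate \eqref{Hormander estimate} fails whenever $p$ falls short of the threshold in \eqref{main theorem 1}. The construction interpolates between the two extreme cases already in the literature: the standard Knapp cap for the translation-invariant phase, realising the minimal-signature Stein/Bourgain--Guth threshold \cite{Stein1986, Bourgain2011}, and the Bourgain--Wisewell focusing construction (refined in higher dimensions by Guth and the authors), realising the maximal-signature threshold \cite{Bourgain1991, Wisewell2005, GHI2019}.

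Set $k := (n-1-\sigma)/2 \in \Z_{\geq 0}$. I would take a phase of the form
\[
\phi_\sigma(x;\omega) = \langle x', \omega\rangle + x_n \psi(\omega) + x_n^2 \chi(\omega),
\]
where $\psi$ is a non-degenerate quadratic form on $\R^{n-1}$ of signature $\sigma$, and $\chi$ is a small higher-order polynomial correction adapted from the maximal-signature construction of \cite{GHI2019}. The correction is chosen to act non-trivially on a distinguished $\omega$-subspace (the \emph{focusing block}) responsible for the higher-codimensional wave-packet concentration, and trivially on a complementary $2k$-dimensional \emph{hyperbolic block} whose role is solely to reduce the overall signature from $n-1$ to $\sigma$. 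A direct Hessian calculation confirms that $\phi_\sigma$ satisfies H1), and that the Hessian appearing in \eqref{non vanishing curvature} equals $\partial^2\psi + 2x_n \partial^2\chi + O(x_n^2)$; this Hessian retains the signature $\sigma$ of $\partial^2\psi$ on $\supp a$ provided the amplitude is localised to a sufficiently small $x_n$-neighbourhood of $0$.

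A standard wave-packet decomposition associates to each $\lambda^{-1/2}$-cap $\omega_j$ a tube $\theta_{\omega_j}\subset B(0,\lambda)$ of dimensions $\underbrace{\lambda^{1/2} \times \cdots \times \lambda^{1/2}}_{n-1} \times \lambda$ on which $T^\lambda \chi_{\omega_j}$ has essentially constant amplitude. By the choice of $\chi$, the tubes corresponding to a suitable family of caps varying within the focusing block all pass within $O(\lambda^{1/2})$ of a common $k$-dimensional algebraic subvariety $V_\sigma \subset B(0,\lambda)$; the hyperbolic directions merely translate the tubes laterally. Taking $f_\sigma^{(\lambda)}$ to be a modulated sum of the corresponding indicator functions, with the modulations chosen to produce constructive interference on a $\lambda^{1/2}$-neighbourhood of $V_\sigma$, yields, after an $L^p$ count balancing amplitude against focal volume, precisely the threshold exponent in \eqref{main theorem 1}. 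The split between the odd-$n$ and even-$n$ formulas reflects an integrality constraint on the focusing dimension $k$.

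The main obstacle is to achieve three goals simultaneously: preserving the signature $\sigma$ across $\supp a$; producing focusing on a variety of precisely the right dimension $k$; and ensuring that the neutral hyperbolic block does not disrupt the constructive interference near $V_\sigma$. The first is handled by smallness of $\chi$ together with localisation of $a$; the second by transplanting the focusing configuration of \cite{GHI2019} into the sub-phase supported on the focusing block; and the third via a transversality argument verifying that the oscillatory contributions from the hyperbolic directions remain $O(1)$ over the focal window and so do not disturb the interference on $V_\sigma$. Once these geometric facts are established, the remaining $L^p$ calculation is routine.
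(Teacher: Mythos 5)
Your overall strategy --- combining an elliptic (maximal-signature) building block with a hyperbolic (signature-zero) block to interpolate between the two extremes --- is the same one the paper uses, but you have assigned the wrong roles to the two blocks, and this is a genuine gap rather than a cosmetic difference. In your construction the $2k$-dimensional hyperbolic block is inert: you say its role is ``solely to reduce the overall signature'' and that the hyperbolic directions ``merely translate the tubes laterally,'' with all of the focusing transplanted from \cite{GHI2019} into the complementary $\sigma$-dimensional block. But the signature-dependent degradation of the exponent in \eqref{main theorem 1} (the threshold \emph{increases} as $\sigma$ decreases) is driven precisely by focusing in the hyperbolic directions: Bourgain's example \cite{Bourgain1991} shows that for a signature-zero block in $d$ variables one may take $h \equiv 1$ and obtain $|T^{\lambda}_{\mathrm{hyp}}h| \gtrsim \lambda^{-(d-1)/4}$ on an $O(1)$-neighbourhood (not an $O(\lambda^{1/2})$-neighbourhood) of a variety of dimension $m_d = \lfloor\frac{d+2}{2}\rfloor$ --- a far stronger concentration than anything the elliptic mechanism produces. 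If the hyperbolic block genuinely contributes nothing beyond lateral translation, your example yields at best the necessary condition coming from the $(\sigma+1)$-dimensional elliptic construction tensored with a trivial factor, which is strictly weaker than \eqref{main theorem 1}. The degenerate case $\sigma = 0$, $n$ odd makes this concrete: your focusing block is then empty, your construction produces no nontrivial lower bound at all, yet the proposition demands failure for all $p < 2\cdot\frac{n+1}{n-1}$ --- which is exactly what Bourgain's hyperbolic focusing delivers.

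The paper's proof takes the phase $\phi(x;\omega) = \langle x',\omega\rangle + \frac{1}{2}\langle \mathbf{A}(x_n)\omega,\omega\rangle$ with $\mathbf{A} = \mathbf{H}_{n-\sigma}\oplus\mathbf{E}_{\sigma+1}$ and the test function $f = h\otimes g$, where $h$ is Bourgain's hyperbolic example in dimension $n-\sigma$ (focusing at scale $O(1)$ on $Z_{n-\sigma}$, via \eqref{lin hyp ex 2}) and $g$ is the Bourgain--Guth elliptic example in dimension $\sigma+1$ (focusing at scale $O(\lambda^{1/2})$, via \eqref{lin ell ex 2}); the operator then factorises as in \eqref{lin ten ex 1}, and Fubini combines the two lower bounds. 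To repair your argument you would need to (i) place a genuine focusing construction on the hyperbolic block rather than leaving it neutral, and (ii) correct the dimension count: the relevant concentration set is not a $k$-dimensional variety with $k = (n-1-\sigma)/2$, but essentially a product whose factors have dimensions $m_{n-\sigma} = \frac{n-\sigma+1}{2}$ and $m_{\sigma+1} = \lfloor\frac{\sigma+3}{2}\rfloor$. As written, the final ``$L^p$ count balancing amplitude against focal volume'' cannot land on the threshold in \eqref{main theorem 1}.
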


These examples are given by essentially taking tensor products of existent examples for the $\sigma = 0$ and $\sigma=n-1$ cases, which are due to Bourgain \cite{Bourgain1991, Bourgain1995} and Bourgain--Guth \cite{Bourgain2011} (see also \cite{Minicozzi1997, Wisewell2005}). The details are discussed in \S\ref{lin nec sec} below. 

\subsection{Non-sharpness} It is also important to note that there exist examples of operators for which \eqref{Hormander estimate} is known to hold for a wider range of exponents than \eqref{main theorem 1}. For instance, the extension operator $E_{\mathrm{ell}}$ associated to the elliptic paraboloid, which is a prototypical example in the maximal signature case, has been shown to satisfy a wider range of $L^p$ estimates than \eqref{main theorem 1} in all but a finite number of dimensions (see \cite{Bourgain2011, Guth2016, HR2019, Wang}). More generally, one may consider extension operators associated to arbitrary paraboloids.

\begin{example}\label{prototypical example} Given a non-degenerate quadratic form $Q \colon \R^{n-1} \to \R$, define the associated \textit{extension operator}
\begin{equation}\label{proto 1}
    E_Qf(x) := \int_{B^{n-1}} e^{2\pi i(\langle x',\omega\rangle +x_n Q(\omega))} f(\omega)\,\ud\omega, \qquad x=(x',x_n)\in\mathbb{R}^{n-1}\times \mathbb{R}.
\end{equation}
Let $0 \leq \sigma \leq n-1$ be such that $n-1-\sigma$ is even. Affine invariance typically reduces the study of these operators to that of the prototypical examples where
\begin{equation*}
    Q_\sigma(\omega):= \frac{1}{2}\inn{\mathrm{I}_{n-1,\sigma}\,\omega}{\omega} = \frac{1}{2} \sum_{j=1}^{\frac{n-1+\sigma}{2}} \omega_j^2 - \frac{1}{2} \sum_{j=\frac{n+1+\sigma}{2}}^{n-1} \omega_j^2.
\end{equation*}
Here, writing $\mathrm{I}_d$ for a $d \times d$ identity matrix, the $(n-1) \times (n-1)$ matrix $\mathrm{I}_{n-1,\sigma}$ is given in block form by
\begin{equation*}
   \mathrm{I}_{n-1,\sigma} :=
   \begin{bmatrix}
   \mathrm{I}_{\frac{n-1+\sigma}{2}} & 0 \\
   0 & - \mathrm{I}_{\frac{n-1-\sigma}{2}}
   \end{bmatrix}.
\end{equation*}
In this case, the corresponding phase in \eqref{proto 1} has signature $\sigma$ and $E_{\sigma} := E_{Q_{\sigma}}$ is the extension operator associated to (a compact piece of) the hyperbolic paraboloid \begin{equation*}
  \mathbb{H}^{n-1,\sigma} :=  \{(\omega,Q_{\sigma}(\omega)):\omega\in \R^{n-1}\}.
\end{equation*}
As discussed in \S\ref{reductions section} below, at a local level all H\"ormander-type operators are smooth  perturbations of the prototypical operators $E_{\sigma} $. \end{example}

It is conjectured \cite{Stein1979} that the operators $E_Q$ (and, in fact, extension operators associated to any surface of non-vanishing Gaussian curvature) are $L^p(B^{n-1}) \to L^p(\R^n)$ bounded for $p > 2\cdot \frac{n}{n-1}$, regardless of the signature. Restriction theory for hyperbolic parabol\ae\ involves a number of novel considerations compared with that of the elliptic case, and has been investigated in a variety of works \cite{Lee2005, Vargas2005, Bourgain2011, CL2017, Stovall2019, Barron}. There has also been a recent programme \cite{BMV, BMV2017, BMV2020, BMVa} to investigate $L^p$-boundedness of extension operators associated to negatively-curved surfaces given by smooth perturbations of the hyperbolic paraboloid $\mathbb{H}^{2,0}$ from Example~\ref{prototypical example}; this turns out to be a rather subtle problem for $p < 4$.  




\subsection{Relation to other problems} It is well-known that $L^p$ estimates for the Fourier extension operators are related to many central questions in harmonic analysis such as the Kakeya conjecture, the Bochner--Riesz conjecture and the local smoothing conjecture for the wave equation (see, for instance, \cite{Tao1998}). In the maximal signature case, $L^p$ estimates for H\"ormander-type operators imply Bochner--Riesz estimates and are further connected to curved variants of the above problems defined over manifolds (see, for instance, \cite{BHS2020, Sogge1987, Sogge2017}), although some of the implications are not as strong as in the Euclidean setting (see\footnote{Note the statements of Corollary 1.4 and Corollary 1.5 in \cite{GHI2019} contain an unwanted $\lambda^{(n-1)/2}$ factor. The authors thank Pierre Germain for pointing out this typographical error.}  \cite[\S1.2]{GHI2019} for results and further details). For operators with general signature, Theorem \ref{main theorem} relates to  further generalisations of the Kakeya and local smoothing problems, the latter now defined with respect to a class of Fourier integral operators. The connections with FIO theory are discussed in detail in \cite{BHS, BHS2020}; see \cite{Wisewell2005} and \cite{Bourgain2011} for further details of the underlying Kakeya-type problems.




\subsection{The r\^ole of the signature}\label{new sec}
The proof of Theorem~\ref{main theorem} follows the argument used to establish the $\mathrm{sgn}(\phi) = n-1$ case from \cite{GHI2019}, with a number of modifications to take account of the relaxed signature hypothesis. There are two significant points of departure from \cite{GHI2019}, where the signature plays a critical r\^ole in the argument (also reflected in the sharp examples in \S\ref{lin nec sec} and \S\ref{mult nec sec}). In both cases, to illustrate the underlying ideas it suffices only to consider the prototypical operators $E_Q$ introduced in Example~\ref{prototypical example}.\medskip

\noindent \textit{Partial transverse equidistribution.} Transverse equidistribution estimates were introduced in \cite{Guth2018} in relation to the elliptic extension operator $E_{\mathrm{ell}}$ and play a significant r\^ole here. In order to describe the setup, it is necessary to briefly review the notion of wave packet decomposition (see \S\ref{wave packet section} for further details). Decompose $B^{n-1}$ into a family of finitely-overlapping $R^{-1/2}$ discs $\theta = B(\omega_{\theta},R^{-1/2})$. By means of a partition of unity, for $f:B^{n-1}\rightarrow \mathbb{C}$ write $f = \sum_{\theta} f_{\theta}$ where each $f_{\theta}$ is supported in $\theta$. Forming a Fourier series decomposition, one may further decompose $f_{\theta} = \sum_v f_{\theta,v}$ where the frequencies $v$ lie in the lattice $R^{1/2}\Z^{n-1}$ and the $\hat{f}_{\theta,v}$ are essentially supported in disjoint balls of radius $R^{1/2}$. The functions $E_Q f_{\theta,v}$ satisfy the following key properties:
\begin{enumerate}[i)]
    \item On $B(0,R)$, each $E_Q f_{\theta,v}$ is essentially supported in a tube $T_{\theta,v}$ of length $R$ and diameter $R^{1/2}$ which is parallel to the normal direction $G(\omega_{\theta}):=(-\partial_\omega Q(\omega_\theta),1)^\top$ and has position dictated by $v$. 
    \item The Fourier transform $\big(E_Q f_{\theta,v}\big)\;\widehat{}\;$ has (distributional) support on the cap
    \begin{equation*}
        \Sigma(\theta) := \big\{ (\omega, Q(\omega)) : \omega \in \theta \big\}. 
    \end{equation*}
\end{enumerate}
For general H\"ormander-type operators $T^\lambda$ a similar setup holds, with the exception that the tubes $T_{\theta,v}$ carrying the functions $T^\lambda f_{\theta,v}$ may be curved (see \S\ref{wave packet section}).

The incidence geometry of the tubes $T_{\theta,v}$ is a major consideration in the $L^p$-theory of H\"ormander-type operators. A  critical case occurs when $f$ is chosen so that the $T_{\theta,v}$ for which $Ef_{\theta,v} \nequiv 0$\footnote{Or for which $Ef_{\theta,v}$ is ``non-negligable''.} are aligned along a lower dimensional manifold $Z$ (or, more precisely, a lower dimensional algebraic variety) inside $B(0,R)$; indeed, analogous situations appear when considering extremal configurations in classical incidence geometry (see, for instance, \cite{Guth2015}), and in fact the (variable coefficient) sharp examples in \S\ref{lin nec sec} exhibit similar structure. Under this hypothesis, by property i) above, $E_Qf$ is essentially supported in $N_{R^{1/2}}Z$, the $R^{1/2}$-neighbourhood of $Z$. It is important to note, however, that the $Ef_{\theta,v}$ each carry some oscillation. If there is sufficient constructive/destructive interference between the wave packets, then it could be the case that the mass of $E_Qf$ is concentrated in a much thinner subset of $N_{R^{1/2}}Z$. 

The signature influences the way in which the wave packets $E_Qf_{\theta,v}$ can interfere with each other. The reason behind this, as explained below, is that the signature largely determines the relationship between the direction $G(\omega_{\theta})$ of each tube $T_{\theta,v}$ on the spatial side and the position of the cap $\Sigma(\theta)$ on the frequency side. In the maximal signature case this relationship, together with the uncertainty principle, ensures that the mass of $E_Qf$ cannot concentrate in a thinner neighbourhood of the variety, but must be evenly spread across $N_{R^{1/2}}Z$. For general maximal signature H\"ormander-type operators, this property can be formally realised via \textit{transverse equidistribution estimates}, which roughly take the form\footnote{Here $\fint_E F := \frac{1}{|E|} \int_E F$ denotes the integral average.}
\begin{equation}\label{intro trans eq}
    \fint_{N_{\rho^{1/2}}Z \cap B(0,R)} |T^{\lambda}f|^2 \lesssim \fint_{N_{R^{1/2}}Z \cap B(0,R)} |T^{\lambda}f|^2, \qquad \rho \leq R.
\end{equation}
These estimates play an important r\^ole in the proof of the maximal signature case of Theorem~\ref{main theorem} by efficiently relating the wave packet geometry at different scales (see \cite{GHI2019, Guth2018}). 
If the maximal signature hypothesis is dropped,  however, then \eqref{intro trans eq} no longer holds in general. Nevertheless, there is a spectrum of weaker variants of \eqref{intro trans eq}, involving additional powers of $(R/\rho)$, which do hold in the general case. The relevant strength of these \textit{partial transverse equidistribution estimates} depends on the signature of the underlying operator. The precise form of these inequalities is discussed in \S\ref{trans eq sec} below.

It remains to explain how the signature affects the localisation properties of $E_Q f$. Here an elliptic case is contrasted with a hyperbolic case in $\mathbb{R}^3$, for wave packets aligned along the subspace $V:=\langle \vec{e}_1 \rangle^{\perp}$, the 2-dimensional plane orthogonal to $\vec{e}_1$.


%


\begin{figure}
    \centering
      \begin{subfigure}[b]{0.54\textwidth}
        \includegraphics[width=\textwidth]{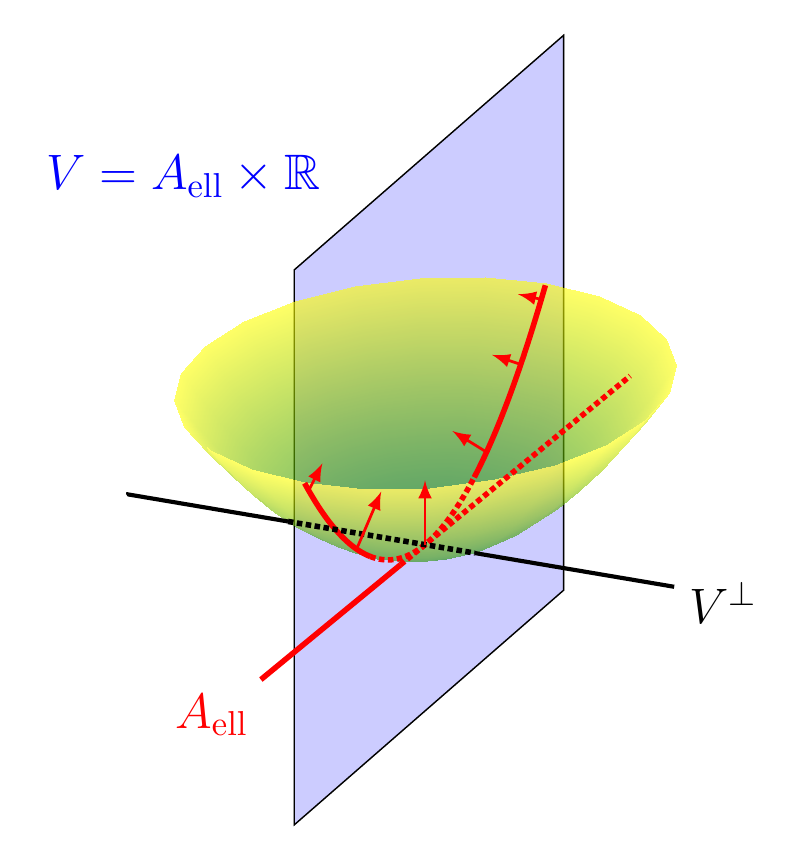}
    \end{subfigure}
\quad
    \begin{subfigure}[b]{0.42\textwidth}
        \includegraphics[width=\textwidth]{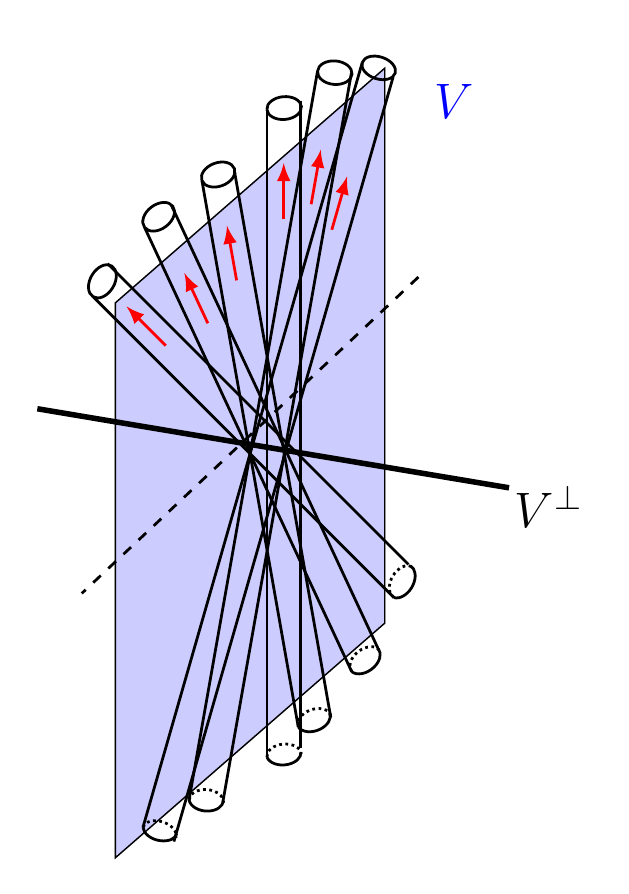}
         \end{subfigure}

    \caption{Transverse equidistribution in the elliptic case. On the spatial side (right-hand figure) the wave packets are aligned along a plane $V$. On the frequency side (left-hand figure), the frequency support is aligned along $V = A_{\mathrm{ell}} \times \R$.}
    \label{ell eq figure}
\end{figure}


%


In particular, consider the elliptic extension operator $E_{\mathrm{ell}}$ in $\R^3$ given by the signature 2 form $Q_{\mathrm{ell}}(\omega) := \frac{1}{2}\big(\omega_1^2 + \omega_2^2\big)$. The situation is depicted in Figure~\ref{ell eq figure}. The directions $G(\omega_{\theta})$ all lie inside $V$, thus the $\omega_{\theta}$ lie along the line $A_{\mathrm{ell}} = \{\omega_1 = 0\}$ in $\R^2$. The Fourier support of $E_{\mathrm{ell}}f$ thus lies in a union of caps $\Sigma(\theta)$ over $\theta$ centred along $A_{\mathrm{ell}}$, so $\supp \big(E_{\mathrm{ell}}f\big)\;\widehat{}\; \subseteq N_{R^{-1/2}} (A_{\mathrm{ell}} \times \R)$. Owing to this localisation, the uncertainty principle implies that $E_{\mathrm{ell}}f$ is essentially constant at scale $R^{1/2}$ in the direction transverse (that is, normal) to $A_{\mathrm{ell}} \times \R$. Crucially, $A_{\mathrm{ell}} \times \R=
V$, thus the mass of $E_{\mathrm{ell}}f$ must be \textit{equidistributed} across the slab $N_{R^{1/2}}(V)$ in the \textit{transverse} direction to $V$. This observation can be used to prove (a suitably rigorous formulation of) the transverse equidistribution estimate \eqref{intro trans eq} in this case: see \cite{Guth2018}.

The above case is somewhat special since $V$ equals $A_{\mathrm{ell}} \times \R$, the plane along which the Fourier support of $E_{\mathrm{ell}}f$ is aligned. For general 2-planes $V$, the Fourier support is aligned along a (possibly) different 2-plane $V'$. However, a key observation is that, in the elliptic case, $V$ and $V'$ only ever differ by a small angle, so again equidistribution of $E_{\mathrm{ell}}f$ holds at scale $R^{1/2}$ in the direction transverse to $V$. Moreover, the argument generalises to higher dimensions: if the tubes $T_{\theta,v}$ lie along a $k$-plane $V$ in $\R^n$, then $E_{\mathrm{ell}}f$ is equidistributed in directions belonging to $V^{\perp}$. Variants also hold when $V$ is replaced by a more general algebraic variety $Z$ (see \cite{Guth2018}).


%


\begin{figure}
    \centering
      \begin{subfigure}[b]{0.57\textwidth}
        \includegraphics[width=\textwidth]{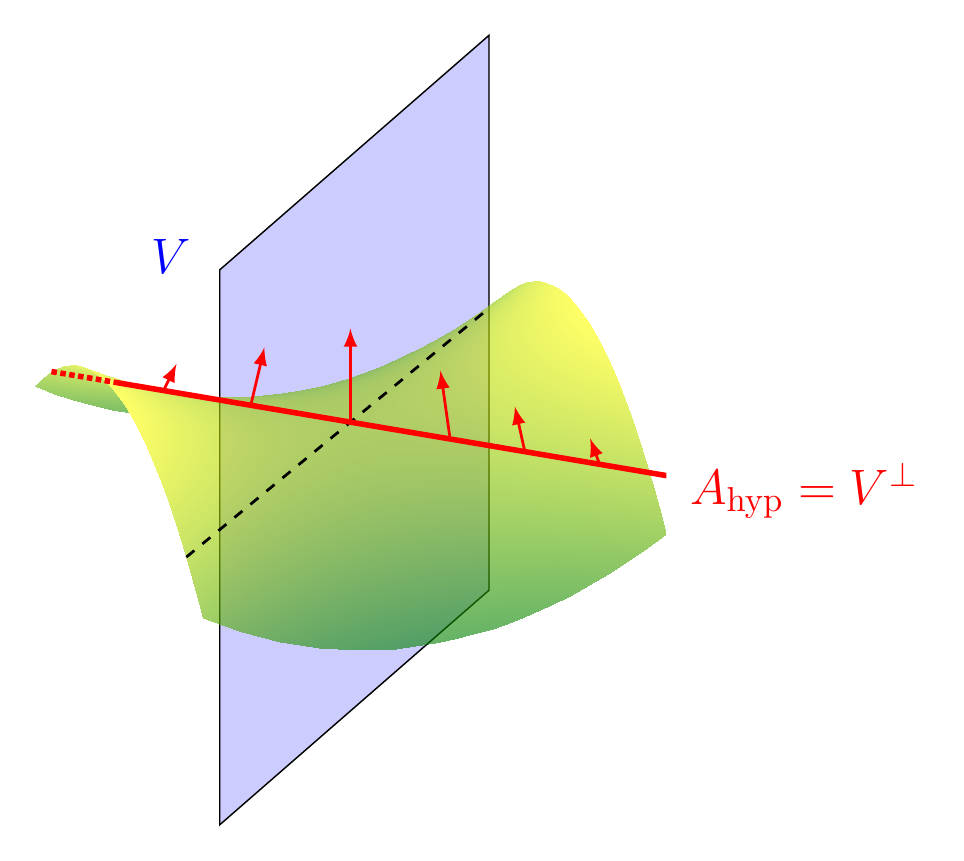}
    \end{subfigure}
\quad
    \begin{subfigure}[b]{0.39\textwidth}
        \includegraphics[width=\textwidth]{tube_eq_fig.pdf}
         \end{subfigure}

    \caption{Failure of transverse equidistribution in the hyperbolic case. On the spatial side (right-hand figure) the wave packets are aligned along the same plane $V$ as in the elliptic case. However, on the frequency side (left-hand figure), the frequency support is aligned along $V' = A_{\mathrm{hyp}} \times \R$ where $A_{\mathrm{hyp}} = V^{\perp}$.}
    \label{hyp eq figure}
\end{figure}


%


For contrast, now consider the case of the hyperbolic extension operator $E_{\mathrm{hyp}}$ in $\R^3$ given by the signature 0 form $Q(\omega) := \omega_1\omega_2$. This situation is depicted in Figure~\ref{hyp eq figure}. The $\omega_{\theta}$ must lie along $A_{\mathrm{hyp}} = \{\omega_2 = 0\}$, so $\supp \big(E_{\mathrm{hyp}}f\big)\;\widehat{}\;$ is contained in $N_{R^{-1/2}} (A_{\mathrm{hyp}} \times \mathbb{R})$. This localisation of the Fourier support guarantees that $E_{\mathrm{hyp}}f$ is equidistributed at scale $R^{1/2}$ in directions transverse to $A_{\mathrm{hyp}}\times \mathbb{R}$. However, this time, these directions are not transverse to $V$; instead, they lie \textit{along} $V$. Indeed, not only are $A_{\mathrm{hyp}}\times \mathbb{R}$ and $V$ different, but in fact $V^\perp \subseteq A_{\mathrm{hyp}}\times \mathbb{R}$. Consequently, the transverse equidistribution estimate \eqref{intro trans eq} no longer holds, and the constructive/destructive interference patterns between the $T_{\theta,v}$ can in fact lead to the concentration of the mass of $E_Qf$ in a tiny $O(1)$-neighbourhood of $V$. The variable coefficient counterexamples of Bourgain \cite{Bourgain1991, Bourgain1995} for H\"ormander-type operators of signature 0 exhibit destructive interference of this kind (see \cite{GHI2019} for further details).

In the mixed signature case in $\mathbb{R}^n$, in general only partial equidistribution occurs as a fusion of the above two situations. Specifically, consider an operator $E_Q$ associated to some $Q$ with signature $\sigma$ and let $V$ be a $k$-dimensional subspace of $\R^n$. In general, if the $T_{\theta,v}$ are aligned along $V$, then the Fourier support of $E_Qf$ will be aligned along a $k$-dimensional affine subspace $V' := A \times \R$, where $A = \{\omega \in B^{n-1} : G(\omega) \in V\}$. The problem is to understand the relationship between $V$ and $V'$. In particular, if $V$ and $V'$ are close to one another (that is, the angle between them is small), then this mirrors the situation in the elliptic case and transverse equidistribution holds. If $V$ and $V'$ are far from one another (that is, the angle between them is large), then this mirrors the above hyperbolic case and transverse equidistribution can fail. It transpires that, in general, a hybrid of these two situations occurs: a \textit{partial transverse equidistribution} holds for $E_Q f$ inside $N_{R^{1/2}}V$, where the equidistribution property holds only for directions lying in a certain \textit{subspace} $W$ of $V^{\perp}$. The dimension of $W$ can be bounded as a function of $n$, $k$ and, importantly, $\sigma$. If $\sigma$ is large then $W$ has large dimension and one is close to guaranteeing the full transverse equidistribution property \eqref{intro trans eq} enjoyed by the elliptic case. If $\sigma$ is small, then the dimension of $W$ is small and only a weak version of \eqref{intro trans eq} holds. For instance, if $\sigma \leq 2k - n - 1$, then the subspace $W$ can be zero dimensional, in which case no non-trivial transverse equidistribution estimates hold: see \S\ref{trans eq sec} for details.
 \medskip

\noindent \textit{Decoupling.} Although both elliptic and hyperbolic paraboloids have non-vanishing Gaussian curvature, hyperbolic paraboloids contain linear subspaces. The existence of such subspaces precludes certain bilinear estimates for extension operators associated to hyperbolic paraboloids \cite{Vargas2005,Lee2005} and means only weak $\ell^2$-decoupling inequalities hold for such operators \cite{BD2017}. In the present paper, the norm $\|T^{\lambda}f\|_{L^p(\R^n)}$ is studied via a \textit{broad/narrow analysis}, as introduced in \cite{Bourgain2011} (see also \cite{Guth2018, GHI2019}). This analysis involves certain $\ell^p$-decoupling estimates, the strength of which also depends on the signature. Similar observations have appeared previously in \cite{BD2017} and the recent paper~\cite{Barron}.

 In particular, the broad/narrow analysis requires analysing the so-called ``narrow'' contributions to $\|T^{\lambda}f\|_{L^p(\R^n)}$, which arise when the support of $f$ is localised close to a submanifold of $\mathbb{R}^{n-1}$. Consequently, one is led to consider certain slices of the (variable) hypersurfaces defined with respect to the phase $\phi$. These contributions are dealt with using a combination of a decoupling inequality and a rescaling argument. The efficiency of the decoupling inequality depends on how curved these slices are, which in turn depends on the signature.

More concretely, for the extension operator $E_{\sigma}f$ from Example~\ref{prototypical example}, the narrow contributions occur when the support of $f$ is localised close to an affine subspace $A$ of $\mathbb{R}^{n-1}$. In this case, as in the earlier discussion on transverse equidistribution, the Fourier transform $\big(E_{\sigma}f\big)\;\widehat{}\;$ is supported in a neighbourhood of the slice $\Sigma_A$ of $\mathbb{H}^{n-1,\sigma}$ formed by intersecting $\mathbb{H}^{n-1,\sigma}$ with the plane $A \times \R$. The favourable situation occurs when $\Sigma_A$ is well-curved, in the sense that the principal curvatures of this surface (viewed as a hypersurface lying in $A \times \R$) are all bounded away from zero. This is always the case for the elliptic paraboloid. For well-curved $\Sigma_A$ one may use the strong decoupling inequalities from \cite{BD2017} (or \cite{Bourgain2013, BD2015} in the elliptic case) to study the narrow contribution. For hyperbolic paraboloids, however, it can happen that a given slice coincides with a linear subspace of $\mathbb{H}^{n-1,\sigma}$: for instance, $\mathbb{H}^{n-1,\sigma}$ contains the $\frac{n-1-\sigma}{2}$-dimensional linear subspace of all $(\xi_1,\ldots,\xi_n)\in\hat{\mathbb{R}}^n$ satisfying \begin{equation*}
    \xi_j = \left\{\begin{array}{ll}
     \xi_{j+\frac{n-1+\sigma}{2}} &  \textrm{ for $1 \leq j \leq \tfrac{n-1-\sigma}{2}$,}  \\
     0 & \textrm{ for $\tfrac{n+1-\sigma}{2} \leq j \leq \tfrac{n-1+\sigma}{2}$ or $j=n$}\\
    \end{array}\right. .
\end{equation*}
In this case, owing to the lack of curvature, no non-trivial decoupling inequalities exist to control the narrow contribution and, consequently, much poorer estimates hold. In general, to obtain the best possible decoupling inequalities for a slice $\Sigma_A$, one needs to rely on the principal curvatures of $\Sigma_A$ which are bounded away from zero. The number of these curvatures can be estimated in terms of the signature $\sigma$. If $\sigma$ is large, then typically there will be many large principal curvatures and strong decoupling estimates will hold. If $\sigma$ is small, then for certain slices there will be few large principal curvatures and only weak decoupling estimates are available. This discussion is made precise in Proposition~\ref{dec prop} and Corollary~\ref{dec cor} below.




\subsection{Methodology: $k$-broad estimates} As in \cite{Guth2018, GHI2019}, the main ingredient in the proof of Theorem~\ref{main theorem} is a \textit{$k$-broad estimate}.

\begin{theorem}\label{k-broad theorem} Let $T^{\lambda}$ be a H\"ormander-type operator of reduced phase $\phi$. For all $2 \leq k \leq n$ and $\varepsilon > 0$ the $k$-broad estimate
\begin{equation*}
\|T^{\lambda}f\|_{\mathrm{BL}^p_{k,A}(\R^n)} \lesssim_{ \varepsilon} \lambda^{\varepsilon} \|f\|_{L^2(B^{n-1})}
\end{equation*}
holds for some integer $A\geq 1$ whenever $p$ satisfies $p \geq  \bar{p}(n,\mathrm{sgn}(\phi), k)$ for
\begin{equation*}
\bar{p}(n,\sigma, k) := \left\{\begin{array}{ll}
                                2 \cdot \frac{n+1}{n-1} & \textrm{for $1 \leq k \leq \frac{n+1-\sigma}{2}$} \\[6pt]
                                2 \cdot \frac{n+2k+1+\sigma}{n+2k-3+\sigma} & \textrm{for $\frac{n+1-\sigma}{2} \leq k \leq \frac{n+1+\sigma}{2}$} \\[6pt]
                                2 \cdot \frac{k}{k-1} & \textrm{for $\frac{n+1+\sigma}{2} \leq k \leq n$}
                               \end{array} \right. .
\end{equation*}
\end{theorem}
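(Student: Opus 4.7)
The plan is to adapt the $k$-broad induction-on-scales machinery of Guth \cite{Guth2018} and \cite{GHI2019}, which handles the $\sigma = n-1$ case, so as to take account of the general signature assumption via the partial transverse equidistribution and signature-sensitive decoupling tools highlighted in \S\ref{new sec}. The overall structure will be a nested induction: an outer induction on the scale $\lambda$ (or equivalently the parameter $R = \lambda^{1-\delta}$ after parabolic rescaling) and an inner induction on $k$. At each step I intend to apply a broad/narrow decomposition à la Bourgain--Guth \cite{Bourgain2011} on the $\omega$-side, followed by a polynomial partitioning of the spatial side as in \cite{Guth2018}.

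First I would fix a reduced phase and invoke the wave packet decomposition of \S\ref{wave packet section} to write $T^\lambda f = \sum_{\theta,v} T^\lambda f_{\theta,v}$. The broad/narrow split divides $B^{n-1}$ into caps of size $K^{-1}$ and passes to the narrow term when the caps of $f_\theta$ that contribute to a point $x$ are essentially concentrated near some affine subspace $A \subset \R^{n-1}$. The narrow term would be estimated via the $\ell^p$-decoupling inequality for the slice $\Sigma_A$ of the variable hypersurface, whose efficiency is controlled by the number of non-degenerate principal curvatures of $\Sigma_A$, and the latter is in turn governed by $\mathrm{sgn}(\phi)$; this is precisely the content of Proposition~\ref{dec prop} and Corollary~\ref{dec cor} that are promised later in the paper. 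After decoupling I would rescale the narrow pieces via the standard parabolic rescaling for H\"ormander operators and feed them back into the inductive hypothesis, thereby gaining the factor needed to close the induction.

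For the broad contribution I would run the polynomial partitioning of Guth, producing cells, transverse tubes and tangent tubes to a variety $Z$ of dimension $m \in \{k,\dots,n-1\}$ and controlled degree. The cellular term is handled by applying the induction on the scale inside each cell; the transverse term by Wongkew-type tube/variety incidence bounds combined with the inductive $k$-broad estimate on each tube. The tangential term, where wave packets cluster in $N_{R^{1/2}}Z$, is the most delicate: here I would appeal to the partial transverse equidistribution estimate described in \S\ref{trans eq sec}. The subspace $W \subset V^\perp$ along which equidistribution survives has dimension bounded below by a function of $n$, $m$ and $\sigma$, and this dimension is exactly what quantifies the $L^2$-mass gain when one restricts to a thinner neighbourhood of $Z$. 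Iterating the partitioning on $Z$ (as in \cite{GHI2019}) until one reaches a variety of dimension $k$ would produce, after optimisation over the iteration depth, the signature-dependent exponent $\bar{p}(n,\sigma,k)$.

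The main obstacle, I expect, is the middle regime $\tfrac{n+1-\sigma}{2}\le k \le \tfrac{n+1+\sigma}{2}$, where neither the pure elliptic analysis (which drives the right endpoint $2k/(k-1)$) nor the pure Bourgain $\sigma=0$ analysis (which drives the flat Stein--Tomas exponent $2(n+1)/(n-1)$) is sharp. In this regime one must track the precise dimension of the equidistribution subspace $W$ through each level of the polynomial partitioning iteration, while simultaneously keeping the decoupling loss in the narrow step in sync with the gain, so that the two sides of the broad/narrow inequality balance at exactly $p = 2(n+2k+1+\sigma)/(n+2k-3+\sigma)$. Verifying that the bookkeeping is consistent across all values of $m$ and $k$, and that the final estimate genuinely closes without circularity, is the step on which the proof will stand or fall; it is also where the statements of the partial transverse equidistribution lemmas of \S\ref{trans eq sec} and the signature-dependent decoupling corollary will need to be used in their sharpest form.
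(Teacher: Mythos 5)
Your proposal correctly identifies the engine that actually proves the $k$-broad estimate in the paper: a polynomial-partitioning induction in the style of \cite{Guth2018, GHI2019} (induction on the scale $R$, on the parameter $A$, and on the dimension $m$ of the variety $Z$), with the cellular and transverse terms handled as before and the tangential term handled by the \emph{partial} transverse equidistribution estimate of \S\ref{trans eq sec}. This is indeed how the paper proceeds: Theorem~\ref{k-broad theorem} is deduced from an inductive proposition identical to Proposition 10.1 of \cite{GHI2019} except that the equidistribution exponent $n-m$ is replaced by $\mu(n,\sigma,m) = \max\{n-2m+1, \tfrac{n+1+\sigma}{2}-m, 0\}$, and the induction closes because the target exponent is redefined as $e_{k,n,\sigma}(p)=\tfrac12(\tfrac12-\tfrac1p)\tfrac{n+1+\sigma+2k}{2}$. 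Your instinct that the middle regime $\tfrac{n+1-\sigma}{2}\le k\le\tfrac{n+1+\sigma}{2}$ is where the new bookkeeping must be verified is also right; the two outer regimes follow from Stein's estimate and from Bennett--Carbery--Tao (the latter corresponding to $\mu=0$, i.e.\ no equidistribution at all), not from "pure elliptic analysis".

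The genuine flaw is structural: you have grafted the broad/narrow decomposition and the signature-dependent $\ell^p$-decoupling into the proof of the $k$-broad estimate, where they play no r\^ole. In this paper (as in \cite{Guth2018, GHI2019}) decoupling and the Bourgain--Guth narrow analysis are used \emph{only} in the subsequent passage from the $k$-broad inequality to the linear $L^p\to L^p$ bound of Theorem~\ref{main theorem} (\S\ref{narrow sec}--\S\ref{broad/narrow sec}). The $k$-broad norm $\mathrm{BL}^p_{k,A}$ is constructed precisely so that contributions from caps clustering near a $(k-1)$-plane are discarded by definition; there is no narrow term left to decouple. Moreover, your proposed narrow step — decouple, parabolically rescale, and feed back into the inductive hypothesis — is incompatible with the statement being proved: that mechanism produces an $\ell^p$ sum of rescaled pieces controlled by $\|f\|_{L^p}$, whereas the $k$-broad estimate has $\|f\|_{L^2(B^{n-1})}$ on the right-hand side and its induction is closed using $L^2$-orthogonality of wave packets, not rescaling of narrow caps. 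If you attempted to run the argument as written you would either find nothing for the decoupling to act on, or you would be proving the wrong inequality. Excise the narrow/decoupling step from this proof, keep it for the deduction of Theorem~\ref{main theorem}, and make the induction parameters explicit ($R$, $A$ and $\dim Z$, not $k$), and the remainder of your outline matches the paper's argument.
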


For the definition of the $k$-broad norm, see \cite{Guth2018, GHI2019}. For technical reasons, the theorem is stated for the slightly restrictive class of \textit{reduced phases}, which are defined in \S\ref{reductions section}. Once Theorem~\ref{k-broad theorem} is established, Theorem~\ref{main theorem} follows by a now-standard argument originating in \cite{Bourgain2011}: see \S\ref{broad/narrow sec} for further details.

As with Theorem~\ref{main theorem}, certain `extreme' cases of Theorem~\ref{k-broad theorem} can be deduced from existent results:
\begin{itemize}
\item For $1 \leq k \leq \frac{n+1-\mathrm{sgn}(\phi)}{2}$ the result follows from Stein's oscillatory integral estimate \cite{Stein1986}.
\item For $\frac{n+1+\mathrm{sgn}(\phi)}{2} \leq k \leq n$ the result follows from the multilinear oscillatory integral estimates of Bennett--Carbery--Tao \cite{Bennett2006}.\footnote{The oscillatory integral estimates in \cite{Bennett2006} are stated only at the $n$-linear level but the argument adapts to give results at all levels of linearity: see \cite[\S 5]{Bourgain2011} for an explicit statement of the $k$-linear estimates. The passage from multilinear to $k$-broad inequalities is described in detail in \cite[\S 6]{GHI2019}.}
\item If $\mathrm{sgn}(\phi) = n - 1$, then the $k = 2$ case follows from the bilinear estimates of Lee \cite{Lee2006} and all remaining values of $k$ (under the maximal signature assumption) are treated in \cite{GHI2019}.
\end{itemize}
In all other cases Theorem~\ref{k-broad theorem} is new. It is also sharp in the sense that the range of $p$ cannot be extended. This can be shown by considering extension operators of the type discussed in Example~\ref{prototypical example} above. The range of $L^p$ is then given by testing the estimate against functions formed by tensor products of the standard test functions appearing in, for instance, \cite{Vargas2005}. The sharpness of Theorem~\ref{k-broad theorem} is discussed in detail in \S\ref{mult nec sec} below.  

Theorem~\ref{k-broad theorem} has a multilinear flavour, and serves as a substitute for the stronger $k$-linear Conjecture~\ref{k-linear conjecture} below.

\begin{definition}
Let $1 \leq k \leq n$ and $\mathbf{T}= (T_1, \dots, T_k)$ be a $k$-tuple of H\"ormander-type operators of the same signature, where $T_j$ has associated phase $\phi_j$, amplitude $a_j$ and generalised Gauss map $G_j$ for $1 \leq j \leq  k$. Then $\mathbf{T}^{\lambda}$ is said to be $\nu$-transverse for some $0 < \nu \leq 1$ (and all $\lambda \geq 1$) if
\begin{equation*}
\big|\bigwedge_{j=1}^k G_j(x;\omega_j) \big| \geq \nu \quad \textrm{for all  $(x; \omega_j) \in \mathrm{supp}\,a_j$  for $1 \leq j \leq k$. }
\end{equation*}
\end{definition}

\begin{conjecture}\label{k-linear conjecture} Let $ (T_1, \dots, T_k)$ be a $\nu$-transverse $k$-tuple of H\"ormander-type operators of the same signature $\sigma$. For any $\lambda \geq 1$ and $1\leq k\leq n$ the $k$-linear estimate
\begin{equation*}
\big\| \prod_{j=1}^k |T^{\lambda}_jf_j|^{1/k}\big\|_{L^p(\R^n)} \lesssim_{\nu, \phi} \prod_{j=1}^k \|f_j\|_{L^2(B^{n-1})}^{1/k}
\end{equation*}
holds whenever $p$ satisfies $p\geq \bar{p}(n,\sigma,k)$. 
\end{conjecture}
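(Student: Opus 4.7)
The plan is to reduce to the two known extremes wherever possible and attack the intermediate regime by induction on scales, following the template of \cite{Guth2018, GHI2019}. For $1\leq k\leq \tfrac{n+1-\sigma}{2}$ the target exponent reduces to $\bar{p}=2\cdot\tfrac{n+1}{n-1}$, and the $k$-linear bound is then immediate from Stein's linear estimate \cite{Stein1986} together with H\"older's inequality. For $\tfrac{n+1+\sigma}{2}\leq k\leq n$ the target $\bar{p}=2k/(k-1)$ coincides with the sharp exponent of the Bennett--Carbery--Tao $k$-linear oscillatory integral estimate \cite{Bennett2006}, which applies under $\nu$-transversality without reference to the signature, so this range is resolved after the reduction to reduced phases of \S\ref{reductions section}. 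The substantive regime is the middle one, $\tfrac{n+1-\sigma}{2}<k<\tfrac{n+1+\sigma}{2}$, where $\bar{p}$ interpolates between the Stein and BCT endpoints.

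In this middle range I would run the polynomial partitioning scheme of \cite{Guth2018, GHI2019}, adapted to the $k$-linear setting rather than the $k$-broad one. Fix $p=\bar{p}(n,\sigma,k)$ and induct on the spatial scale $R$. Decompose each $f_j$ into wave packets $f_{j,\theta,v}$ whose images $T_j^{\lambda} f_{j,\theta,v}$ concentrate on curved tubes of length $R$ and diameter $R^{1/2}$. A trichotomy then applies: either the mass of $\prod_j |T_j^{\lambda} f_j|^{1/k}$ concentrates in a small number of cells of a low-degree polynomial partition (the cellular case, closed by rescaling and the inductive hypothesis), or it concentrates in a thin neighbourhood $N_{R^{1/2}}Z$ of a low-degree variety $Z$ (the algebraic case, splitting into transverse and tangential subcases). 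Transversality of $\mathbf{T}^{\lambda}$ allows one to retain the Bennett--Carbery--Tao multilinear Kakeya input at each step.

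The tangential algebraic subcase is where the signature becomes critical, and the two inputs highlighted in \S\ref{new sec} must be combined. First, the partial transverse equidistribution estimates of \S\ref{trans eq sec} should be used in place of the full equidistribution of \cite{Guth2018}: they give a comparison of $L^2$-densities of $T_j^\lambda f_j$ on $N_{\rho^{1/2}}Z$ and $N_{R^{1/2}}Z$ with a loss of $(R/\rho)^{\gamma/2}$, where $\gamma$ is the codimension of the equidistributing subspace $W\subseteq V^{\perp}$ and is controlled in terms of $n,k,\sigma$. Second, in place of the $\ell^2$-decoupling of Bourgain--Demeter one inserts the signature-adapted $\ell^p$-decoupling for the slices $\Sigma_A$ (Proposition~\ref{dec prop} and Corollary~\ref{dec cor}), whose efficiency is dictated by the number of non-degenerate principal curvatures of $\Sigma_A$. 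The formula for $\bar{p}(n,\sigma,k)$ is rigged precisely so that the transverse equidistribution loss is absorbed by the rescaling step, closing the induction.

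The main obstacle -- and the reason the statement is left as a conjecture -- is that polynomial partitioning plus wave packet analysis inherently produces $k$-\emph{broad} rather than $k$-\emph{linear} control: one must discard the wave packets whose directions fall near $G_{j'}(\omega_{\theta_{j'}})$ for some $j'\neq j$, and the $\nu$-transversality hypothesis by itself offers no mechanism to recover those discarded packets. Even in the maximal signature case $\sigma=n-1$ the analogous $k$-linear bound is open and, in the elliptic case, is already known to imply the full Fourier restriction conjecture. I would therefore expect any honest attack to require either a genuinely new broad-to-linear device under transversality, or a refined wave packet analysis that avoids discarding the near-parallel packets in the first place; without such an ingredient, the scheme above can at best reproduce Theorem~\ref{k-broad theorem} and not the full Conjecture~\ref{k-linear conjecture}.
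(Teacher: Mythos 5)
The statement you were asked to prove is labelled a \emph{conjecture} in the paper: the authors offer no proof of it, remarking only that it generalises a conjecture of Bennett and that it formally implies the $k$-broad Theorem~\ref{k-broad theorem}. Your write-up correctly recognises this. You accurately identify the two ranges of $k$ that are accessible by known methods (Stein plus H\"older for $k \leq \frac{n+1-\sigma}{2}$; Bennett--Carbery--Tao for $k \geq \frac{n+1+\sigma}{2}$), you correctly describe how the paper's actual machinery (polynomial partitioning, the partial transverse equidistribution of \S\ref{trans eq sec}, the signature-adapted decoupling of \S\ref{narrow sec}) is deployed in the intermediate regime, and — most importantly — you correctly diagnose why that machinery yields only the $k$-broad substitute and not the $k$-linear statement: the broad norm discards the wave packet contributions near the bad directions, and $\nu$-transversality alone provides no mechanism to recover them. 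This matches the paper's own position exactly.

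Two small caveats. First, in the range $\frac{n+1+\sigma}{2} \leq k \leq n$ the Bennett--Carbery--Tao estimates for H\"ormander-type operators carry an $R^{\varepsilon}$ (equivalently $\lambda^{\varepsilon}$) loss, whereas the conjecture as stated is loss-free; so even that range is not literally ``resolved'' without an additional $\varepsilon$-removal step, and you should not assert it as settled. Second, the role you assign to the decoupling inequalities in the tangential algebraic subcase is slightly off: in the paper's architecture, Proposition~\ref{dec prop} and Corollary~\ref{dec cor} enter only in the broad-to-linear passage of \S\ref{broad/narrow sec} (the narrow part of the Bourgain--Guth decomposition), not inside the polynomial partitioning induction, where the signature enters solely through Lemma~\ref{trans eq lem 1}. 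Neither point changes your conclusion: what you have written is a correct assessment of the state of the problem, not a proof, and no proof exists in the paper to compare it against.
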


This conjecture is a natural generalisation of a conjecture of Bennett \cite{Bennett2014} concerning the elliptic case. It formally implies Theorem \ref{k-broad theorem} (see \cite[\S 6.2]{GHI2019}).




\subsection{Structure of the article} The layout of the article is as follows:
\begin{itemize}
    \item In \S\ref{lin nec sec} the sharpness of Theorem~\ref{main theorem} is demonstrated and, in particular, the proof of Proposition~\ref{lin nec prop} is presented.
    \item In \S\ref{mult nec sec} the sharpness of Theorem~\ref{k-broad theorem} and Conjecture~\ref{k-linear conjecture} is discussed.
\end{itemize}

The remainder of the article deals with the proofs of Theorems~\ref{main theorem} and~\ref{k-broad theorem}. The presentation is \textit{not} self-contained. In particular, the sister paper \cite{GHI2019}, which treats the maximal signature case, is heavily referenced. The argument in \cite{GHI2019} is fairly modular in nature and, as discussed in \S\ref{new sec}, the signature hypothesis plays a crucial r\^ole only in two places in the argument:
\begin{enumerate}[i)]
    \item The \textit{transverse equidistribution estimates}, which are used to prove the bounds for the $k$-broad norms.
    \item The \textit{decoupling estimates}, used in the passage from $k$-broad to linear estimates as part of the Bourgain--Guth method \cite{Bourgain2011}. 
\end{enumerate} 
These two isolated steps are treated in detail in the present paper. Many other parts of the proof are merely sketched or even omitted entirely, since they are either minor modifications of or identical to corresponding arguments in \cite{GHI2019}. Indeed, once the transverse equidistribution and decoupling theory is established in the general signature setting, the rest of the argument from \cite{GHI2019} carries through with only changes to the numerology. In particular, the remainder of the article proceeds as follows:

\begin{itemize}
    \item In \S\ref{preliminaries section} various preliminaries for the proofs of Theorems~\ref{main theorem} and~\ref{k-broad theorem} are recalled from the literature. This includes the definition of the $k$-broad norms and operators of reduced phase. 
    \item In \S\ref{trans eq sec} the crucial transverse equidistribution estimates are stated and proved. 
    \item In \S\ref{broad theorem proof sec} there is a brief description of how to adapt the argument from \cite{Guth2018, GHI2019}, using the transverse equidistribution results from the previous section, to prove Theorem~\ref{k-broad theorem}.
    \item In \S\ref{narrow sec} the relevant  decoupling theory is discussed.
    \item In \S\ref{broad/narrow sec} Theorem~\ref{k-broad theorem} is combined with the decoupling estimates from \S\ref{narrow sec} to complete the proof Theorem~\ref{main theorem}.
\end{itemize}

\begin{acknowledgment} The authors would like to thank  Alex Barron and Larry Guth for discussions on topics related to this article. This material is partly based upon work supported by the National Science Foundation under Grant No. DMS-1440140 while the authors were in residence at the Mathematical Sciences Research Institute in Berkeley, California, during the Spring 2017 semester.
\end{acknowledgment}




\section{Necessary conditions: linear bounds}\label{lin nec sec}

\subsection{Overview} In this section sharp examples for Theorem~\ref{main theorem} are obtained, thereby proving Proposition~\ref{lin nec prop}. They arise simply by tensoring existing examples for the extremal cases of minimal and maximal signatures. 

All of the phases considered below are of the following basic form: given a smooth 1-parameter family of symmetric matrices $\mathbf{A} \colon \R \to \mathrm{Mat}(n-1, \R)$, define $\phi \colon \R^n \times \R^{n-1} \to \R$ by
\begin{equation}\label{lin ex 1}
    \phi(x;\omega) := \inn{x'}{\omega} + \frac{1}{2} \inn{\mathbf{A}(x_n)\,\omega}{\omega}. 
\end{equation}
In order for this phase function to satisfy the conditions H1) and H2) from the introduction, the component-wise derivative $\mathbf{A}'$ of $\mathbf{A}$ must be invertible on a neighbourhood of the origin. In this case, the signature of the phase function $\phi$ corresponds to the common signature of the matrices $\mathbf{A}'(x_n)$ for $x_n$ near 0.

In the forthcoming examples $T^{\lambda}$ is taken to be a H\"ormander-type operator defined with respect to the phase $\phi^{\lambda}$ for some $\phi$ as in \eqref{lin ex 1}, and an amplitude with sufficiently small support so that the conditions H1) and H2) are satisfied. The analysis pivots on finding suitable choices of $\mathbf{A}$ and test functions $f$ so that $T^{\lambda}f$ is highly concentrated near a low degree algebraic variety. In particular, the varieties in question will be hyperbolic paraboloids of the form 
\begin{equation}\label{lin ex 2}
    Z_d := \big\{x \in \R^d : x_{2j-1}x_d=\lambda x_{2j}\;\textrm{for all } 1 \leq j \leq \lfloor\tfrac{d-1}{2}\rfloor\big\}.
\end{equation}
Note that each $Z_d$ is of dimension $m_d := \lfloor \frac{d+2}{2} \rfloor$. This corresponds to the minimal dimension for `Kakeya sets of curves' in $\R^d$: see \cite{Bourgain1991, Wisewell2005, Bourgain2011}. For further details on the r\^ole of algebraic varieties in the study of oscillatory integral operators see, for instance, the introductory discussions in \cite{Guth2018} or \cite{GHI2019}.

\subsection{Hyperbolic example}\label{lin hyp sec} The first example is due to Bourgain \cite{Bourgain1991} (see also \cite{Bourgain1995}) and corresponds to the minimal signature case.  

For $d \geq 3$ odd let $\mathbf{H}_d \colon \R \to \mathrm{Mat}(d-1,\R)$ be given by
\begin{equation*}
\mathbf{H}_d(t) := 
\underbrace{\begin{pmatrix}
0 & t \\
t & t^2
\end{pmatrix}
\oplus \dots \oplus 
\begin{pmatrix}
0 & t \\
t & t^2
\end{pmatrix}}_{\text{$ \frac{d-1}{2}$-fold}} 
\end{equation*}
Near the origin the derivative matrix $\mathbf{H}_d'(t)$ is a perturbation of 
\begin{equation}\label{lin hyp ex 1}
    \begin{pmatrix}
0 & 1 \\
1 & 0
\end{pmatrix}
\oplus \dots \oplus 
\begin{pmatrix}
0 & 1 \\
1 & 0
\end{pmatrix}
\end{equation}
and is therefore invertible with signature 0. Note that \eqref{lin hyp ex 1} corresponds to the matrix $\mathrm{I}_{d-1,\sigma}$ from Example~\ref{prototypical example} after a coordinate rotation. 

Taking $\mathbf{A} = \mathbf{H}_d$, let $T_{\mathrm{hyp}}^{\lambda}$ be a  H\"ormander-type operator with phase $\phi^{\lambda}$ for $\phi$ as defined in \eqref{lin ex 1}. A key observation of Bourgain \cite{Bourgain1991} is that there exists\footnote{In fact, one may take $h \equiv 1$.} a smooth function $h \colon \R^{d-1} \to \mathbb{C}$ satisfying:
\begin{itemize}
    \item $|h(\omega)| \sim 1$ for all $\omega \in B^{d-1}$.
    \item There exists a dimensional constant $c > 0$ such that
    \begin{equation}\label{lin hyp ex 2}
    |T_{\mathrm{hyp}}^{\lambda} h(x)|\gtrsim \lambda^{-\frac{d-1}{4}} \qquad \text{for all $x\in N_cZ_d\cap B(0,\lambda)$,}
\end{equation}
where the variety $Z_d$ is as in \eqref{lin ex 2}. 
\end{itemize}
This bound follows from a simple stationary phase computation. In addition to \cite{Bourgain1991, Bourgain1995}, see the expositions in \cite{Wisewell2005, Sogge2017, GHI2019} for further details.

\subsection{Elliptic example}\label{lin ell sec} The second example is due to Bourgain--Guth \cite{Bourgain2011} and corresponds to the maximal signature case.  

For $d \geq 2$ let $\mathbf{E}_d \colon \R \to \mathrm{Mat}(d-1,\R)$ be given by 
\begin{equation*}
\mathbf{E}_d(t) := 
\underbrace{\begin{pmatrix}
t & t^2 \\
t^2 & t+t^3
\end{pmatrix}
\oplus \dots \oplus 
\begin{pmatrix}
t & t^2 \\
t^2 & t+t^3
\end{pmatrix}}_{\text{$\lfloor\frac{d-1}{2}\rfloor$-fold}}
\oplus 
\begin{pmatrix}
t
\end{pmatrix}^*
\end{equation*}
where the $^*$ indicates that the final $(t)$ block appears if and only if $d$ is even. Near the origin the derivative matrix $\mathbf{E}_n'$ is a perturbation of the identity and is therefore invertible with maximal signature $d-1$. 

Taking $\mathbf{A} = \mathbf{E}_d$, let $T_{\mathrm{ell}}^{\lambda}$ be a  H\"ormander-type operator with phase $\phi^{\lambda}$ for $\phi$ as defined in \eqref{lin ex 1}. Roughly speaking, in \cite{Bourgain2011} it is shown that there exists a smooth function $g \colon \R^{d-1} \to \mathbb{C}$ satisfying:
\begin{itemize}
    \item $|g(\omega)| \sim 1$ for all $\omega \in B^{d-1}$.
    \item There exists a dimensional constant $c > 0$ such that
    \begin{equation}\label{lin ell ex 1}
    |T_{\mathrm{ell}}^{\lambda}\, g(x)|\gtrsim \lambda^{-(d+m_d-2)/4} \qquad \text{for all $x\in N_{c\lambda^{1/2}}Z_d\cap B(0,\lambda)$,}
\end{equation}
where the variety $Z_d$ is as in \eqref{lin ex 2} and $m_d = \dim Z_d = \lfloor\frac{d+2}{2}\rfloor$. 
\end{itemize}
The estimate \eqref{lin ell ex 1} is not quite precise since the example in \cite{Bourgain2011} is randomised and the pointwise bound \eqref{lin ell ex 1} holds only in expectation. However, there exists a function $g$ for which the weaker substitute
   \begin{equation}\label{lin ell ex 2}\tag{\theequation$'$}
    \|T_{\mathrm{ell}}^{\lambda}\, g\|_{L^p(\R^d)} \gtrsim \lambda^{-(d+m_d-2)/4} \lambda^{(d+m_d)/2p} 
\end{equation}
does hold, and this suffices for the present purpose. In addition to \cite{Bourgain2011}, see the exposition in \cite{GHI2019} for further details. 

\subsection{Tensored examples} To prove Proposition~\ref{lin nec prop}, the linear estimates are tested against examples formed by tensoring the hyperbolic and elliptic examples described above. To this end, fix $1 \leq \sigma \leq n - 1$ with $n-1-\sigma$ even and let 
\begin{equation*}
    \mathbf{A}_{n,\sigma} := \mathbf{H}_{n-\sigma} \oplus \mathbf{E}_{\sigma+1} \colon \R \to \mathrm{Mat}(n-1,\R). 
\end{equation*}
Taking $\mathbf{A} = \mathbf{A}_{n,\sigma}$, let $T^{\lambda}$ be a H\"ormander-type operator with phase $\phi^{\lambda}$ for $\phi$ as defined in \eqref{lin ex 1}. Let $f$ denote the tensor product $f := h \otimes g \colon \R^{n-1} \to \C$ where
\begin{itemize}
    \item $h \colon \mathbb{R}^{n-\sigma-1}\rightarrow\mathbb{C}$ is a hyperbolic example as in \S\ref{lin hyp sec} in dimension $n - \sigma$,
    \item $g \colon :\mathbb{R}^\sigma\rightarrow\mathbb{C}$ is an elliptic example as in \S\ref{lin ell sec} in dimension $\sigma+1$.
\end{itemize}

If the amplitudes are suitably defined, then it follows that
\begin{equation}\label{lin ten ex 1}
    T^{\lambda}f(x) = T^{\lambda}_{\mathrm{hyp}}h(x',x_n)T^{\lambda}_{\mathrm{ell}}\,g(x'',x_n) \quad \textrm{for $x = (x',x'',x_n) \in \R^{n-\sigma - 1} \times \R^{\sigma} \times \R$,}
\end{equation}
where $T^{\lambda}_{\mathrm{hyp}}$ is defined with respect to $\mathbf{H}_{n-\sigma}$ and $T^{\lambda}_{\mathrm{ell}}$ is defined with respect to $\mathbf{E}_{\sigma+1}$. 

Suppose that for all $\varepsilon>0$ the estimate 
\begin{equation*}
    \|T^{\lambda} f\|_{L^p(\R^n)}\lesssim_{\varepsilon} \lambda^{\varepsilon} \|f\|_{L^p(B^{n-1})}
\end{equation*} 
holds for $T^{\lambda}$ and $f$ as above, uniformly in $\lambda$. The construction ensures that $\|f\|_{L^p(B^{n-1})} \sim 1$ and so 
\begin{equation}\label{lin ten ex 2}
    \|T^{\lambda} f\|_{L^p(\R^n)}\lesssim_{\varepsilon} \lambda^{\varepsilon}.
\end{equation} 
Thus, to obtain the desired $p$ constraints, the problem is to bound the left-hand side of \eqref{lin ten ex 2} from below.

Before proceeding, it is helpful to make a few simple geometric observations regarding the varieties $Z_d$. Given $x_d \in \R$ let
\begin{equation*}
 Z_d[x_d] := Z_d \cap (\R^{d-1} \times \{x_d\})  
\end{equation*}
denote the $x_d$-slice of $Z_d$. It is clear from the definition that the slices $Z_d[x_d]$ are affine subspaces of dimension $m_d-1$. Thus, for $c \sim 1$, one has the volume bound
\begin{equation} \label{lin ten ex 3}
    |N_{c} Z_d[x_d] \cap B(0,\lambda)| \gtrsim \lambda^{m_d-1} \qquad \textrm{for all $|x_d| \leq \lambda/2$},
\end{equation}
where, for each $x_d$, the neighbourhood $N_cZ_d[x_d]$ is considered inside the affine space $\mathbb{R}^{n-\sigma-1}\times \{0\}^{\sigma}\times\{x_d\}$. By \eqref{lin ten ex 1} and Fubini's theorem,
\begin{equation*}
     \|T^{\lambda} f\|_{L^p(\R^n)}^p = \int_{\R} \|T_{\mathrm{hyp}}^{\lambda}h\|_{L^p(\R^{n-\sigma-1}\times\{x_n\})}^p\|T_{\mathrm{ell}}^{\lambda}\,g\|_{L^p(\R^{\sigma}\times\{x_n\})}^p  \,\ud x_n.
\end{equation*}
At the expense of an inequality, one may restrict the $L^p(\R^{n-\sigma-1}\times \{x_n\})$ norm integration to the slice $N_cZ_{n-\sigma}[x_n] \cap B(0,\lambda)$
for the constant $c$ as in \S\ref{lin hyp sec}. In view of \eqref{lin hyp ex 2} and \eqref{lin ten ex 3}, it follows that 
\begin{equation*}\nonumber
     \|T^{\lambda} f\|_{L^p(\R^n)} \gtrsim \lambda^{-(n-1-\sigma)/4}\lambda^{(m_{n-\sigma}-1)/p}\|T_{\mathrm{ell}}^{\lambda}\,g\|_{L^p(\R^{\sigma} \times [-\lambda/2, \lambda/2])}
\end{equation*}     
If the amplitude of $T^{\lambda}_{\mathrm{ell}}$ has suitably small $x_d$-support, then the right-hand norm coincides with the global $L^p$-norm and one may apply \eqref{lin ell ex 2} to conclude that
\begin{equation}\label{lin ten ex 4}
 \|T^{\lambda} f\|_{L^p(\R^n)} \gtrsim \lambda^{-(n-1-\sigma)/4} \lambda^{(m_{n-\sigma}-1)/p} \lambda^{-(\sigma+m_{\sigma + 1}-1)/4} \lambda^{(\sigma+m_{\sigma+1} + 1)/2p}. 
\end{equation}

In order for \eqref{lin ten ex 2} to hold uniformly in $\lambda$, the exponent on the right-hand side of \eqref{lin ten ex 4} must be non-positive. Note that the parities of $n$ and $\sigma+1$ agree and so
\begin{equation*}
 m_{n-\sigma} = \frac{n-\sigma+1}{2} \quad \textrm{and} \quad   m_{\sigma + 1} = \left\{
    \begin{array}{ll}
      \frac{\sigma+2}{2}  & \textrm{if $n$ is odd} \\[3pt]
      \frac{\sigma+3}{2}  & \textrm{if $n$ is even}
    \end{array}
     \right. .
\end{equation*}
Thus, a little algebra shows that the non-positivity of the right-hand exponent in \eqref{lin ten ex 4} is equivalent to
\begin{align*}
    \frac{2(n-1)+\sigma}{2}- \frac{2(n+1)+\sigma}{p}&\geq 0 \qquad \textrm{if $n$ is odd,} \\
    \frac{2n-1+\sigma}{2}- \frac{2n+3+\sigma}{p}&\geq 0 \qquad \textrm{if $n$ is even,}
\end{align*}
which yields the desired condition \eqref{main theorem 1} after rearranging.




\section{Necessary conditions: multilinear bounds}\label{mult nec sec}

Here examples of H\"ormander-type operators are constructed which demonstrate that the range of exponents in Conjecture~\ref{k-linear conjecture} cannot be extended. 

\begin{proposition}\label{sharp multilinear proposition} Conjecture~\ref{k-linear conjecture} is sharp, in the sense that the conditions on $p$ are necessary.
\end{proposition}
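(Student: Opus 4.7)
The plan is to adapt the linear sharpness argument of \S\ref{lin nec sec} to the $k$-linear setting: test the conjectured estimate against the extension operators $E_\sigma^\lambda$ from Example~\ref{prototypical example}, with test functions formed by tensoring the Bourgain hyperbolic function of \S\ref{lin hyp sec} with Knapp-type bump functions in the elliptic directions (as in \cite{Vargas2005}). Splitting $Q_\sigma$ into its hyperbolic block on $\R^{n-\sigma-1}$ and positive-definite block on $\R^\sigma$ yields the factorisation $E_\sigma^\lambda(h \otimes g) = E_{\mathrm{hyp}}^\lambda h \cdot E_{\mathrm{ell}}^\lambda g$, which underlies each of the three regimes of $\bar p(n,\sigma,k)$.

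For the Stein regime $k \leq \tfrac{n+1-\sigma}{2}$ (where $\bar p = 2(n+1)/(n-1)$), AM--GM reduces the $k$-linear inequality to a linear bound on suitably rotated copies of the Bourgain hyperbolic operator, and \eqref{lin hyp ex 2} forces $p \geq 2(n+1)/(n-1)$. For the Bennett--Carbery--Tao regime $k \geq \tfrac{n+1+\sigma}{2}$ (where $\bar p = 2k/(k-1)$), a $k$-linear test against asymmetric Knapp caps on $\mathbb{H}^{n-1,\sigma}$ (with $\nu$-transverse normals and cap dimensions tuned so that the wave packets jointly focus on an $(n-k)$-dimensional sheet, following \cite{Vargas2005}) yields the desired bound.

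For the intermediate regime, fix the Bourgain function $h$ and choose Knapp bumps $g_1, \ldots, g_k$ on transverse caps in $\R^\sigma$, setting $f_j := h \otimes g_j$; by Fubini,
\begin{equation*}
\Big\|\prod_{j=1}^k |E_\sigma^\lambda f_j|^{1/k}\Big\|_{L^p(\R^n)}^p = \int_\R \|E_{\mathrm{hyp}}^\lambda h(\cdot,x_n)\|_{L^p(\R^{n-\sigma-1})}^p\,\Big\|\prod_{j=1}^k |E_{\mathrm{ell}}^\lambda g_j(\cdot,x_n)|^{1/k}\Big\|_{L^p(\R^\sigma)}^p\,\ud x_n.
\end{equation*}
Estimating the first factor via \eqref{lin hyp ex 2} on $N_c Z_{n-\sigma}[x_n] \cap B(0,\lambda)$ and the second via the elliptic Knapp $k$-linear calculation, then comparing against $\prod_j \|f_j\|_{L^2}^{1/k}$, gives after a little algebra the exponent $p \geq 2(n+2k+1+\sigma)/(n+2k-3+\sigma)$, matching $\bar p$.

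The main obstacle is ensuring $\nu$-transversality of the $k$-tuple in the intermediate regime when $k$ exceeds $\sigma + 1$, the dimension of the elliptic block: the $k$ caps must then be offset partially in the hyperbolic directions, and the shapes of the Knapp bumps $g_j$ have to be tuned so that the exponent arithmetic still yields the sharp $\bar p$. Secondary technicalities include parity constraints inherited from \S\ref{lin hyp sec} (requiring $n - \sigma$ odd for the Bourgain construction to apply verbatim) and the passage from the expected pointwise bound \eqref{lin ell ex 1} to its deterministic $L^p$ substitute \eqref{lin ell ex 2} for the randomised elliptic example of \cite{Bourgain2011}.
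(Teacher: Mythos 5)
There is a genuine gap, and it lies at the heart of the construction. You build your $k$-tuple by fixing a \emph{single} hyperbolic factor — Bourgain's function $h$ from \S\ref{lin hyp sec} — and drawing all of the transversality from Knapp caps in the elliptic block. Two problems follow. First, the lower bound \eqref{lin hyp ex 2} is a statement about the \emph{variable-coefficient} operator $T^{\lambda}_{\mathrm{hyp}}$ built from $\mathbf{H}_d(t)$; it does not hold for the constant-coefficient extension operator $E_\sigma$ you propose to test, since the Kakeya-type compression onto $Z_d$ requires curved tubes. Second, and more fundamentally, even if you run the computation with the variable-coefficient tensor operator, a configuration in which the hyperbolic factor is common to all $k$ functions corresponds to the case $\ell = 1$ of the paper's family of examples and yields only the necessary condition $p \geq 2\cdot\frac{n+k}{n+k-2}$, which is strictly weaker than $\bar{p}(n,\sigma,k)$ whenever $\sigma < n-1$. (The apparent gain $\lambda^{-(d-1)/4}$ from concentration on $N_cZ_d$ is exactly cancelled in the $L^2$-normalised accounting by the small measure of $N_cZ_d$.) The issue you flag as "the main obstacle" — that $\nu$-transversality fails once $k$ exceeds $\sigma+1$ — is not a technicality to be patched by "offsetting the caps"; it is the reason your construction cannot reach the sharp exponent.

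The missing ingredient is the \emph{multilinear} hyperbolic example: for the constant-coefficient zero-signature form $Q'(\omega) = \sum_j \omega_{2j-1}\omega_{2j}$ in $d-1$ variables, one takes $\ell$ functions $h_1,\dots,h_\ell$ supported on slabs of thickness $\lambda^{-1}$ in the even coordinates, translated along the maximal linear subspace contained in the hyperbolic paraboloid (the direct generalisation of the Vargas--Lee bilinear example). These satisfy $\|h_j\|_{L^2}\lesssim\lambda^{-(d-1)/4}$ and $|E_{Q'}h_j|\gtrsim\lambda^{-(d-1)/2}$ on a common box of volume $\sim\lambda^{(d+1)/2}$, and — crucially — they supply $\ell$ mutually transverse directions from within the hyperbolic block. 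Tensoring $\ell$ such slabs with $k-\ell+1$ elliptic wave-packet examples (and verifying transversality of the mixed tuple via the rank computation on the Gauss-map matrix) gives $p \geq 2\cdot\frac{n+k-\ell+1}{n+k-\ell-1}$; optimising $\ell$ subject to $\ell \leq \frac{d+1}{2}$, $d \leq n-\sigma$ and $k-\ell+1 \leq n-d+1$ produces all three regimes of $\bar p(n,\sigma,k)$, with the intermediate regime requiring $\ell = \frac{n-\sigma+1}{2}$. Your Stein-regime argument has the same flaw: a linear necessary condition obtained by "AM--GM" cannot rule out a multilinear estimate (transversality genuinely enlarges the range), and the correct example there is $\ell = k$ hyperbolic slabs, not a single Bourgain function.
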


The proof of Proposition~\ref{sharp multilinear proposition} can be slightly modified to demonstrate the sharpness of Theorem~\ref{k-broad theorem}, up to $\varepsilon$-loss. The details of this simple modification are omitted; see \cite{Guth2018} for a discussion of the elliptic case.

Similarly to the examples for Theorem~\ref{main theorem} discussed in the previous section, the sharpness of the multilinear estimates may be deduced by tensoring appropriate examples from extremal signature regimes. In the multilinear case, however, one may simply work with the prototypical extension operators associated to hyperbolic parabol\ae\ from Example~\ref{prototypical example}.




\subsection{Hyperbolic example}\label{hyp ex section} The first example exploits the fact that hyperbolic parabol\ae\ contain affine subspaces and is a direct generalisation of the bilinear example from \cite{Vargas2005}. The example is applied in the extreme case where the signature of the underlying quadratic form is zero. In particular, let $d \in \N$ be odd and consider the zero signature quadratic form
\begin{equation*}
    Q(\omega) := \sum_{j=1}^{d-1} \omega_{2j-1}\omega_{2j} \qquad \textrm{for} \quad \omega \in \R^{d-1}.
\end{equation*}
Note that this agrees with the form $Q_0$ from Example~\ref{prototypical example} after an orthogonal coordinate transformation.

Let $\psi \in C^{\infty}(\R^{(d-1)/2})$ be non-negative, supported in the unit ball and equal to 1 in a neighbourhood of the origin. Fix $a_i \in \R^{(d-1)/2}$ for $0 \leq i \leq (d-1)/2$ such that $a_0 = 0$, $|a_i| \leq 1/2$ and
\begin{equation}\label{hyp ex 1}
    |a_1 \wedge \dots \wedge a_{(d-1)/2}| \gtrsim 1.
\end{equation} 

 For $\lambda \geq 1$ and $1 \leq \ell \leq (d+1)/2$ define the \textit{$\ell$-linear hyperbolic example in $\R^d$} as the $\ell$-tuple of functions $\mathfrak{H}(d,\ell) := (h_1, \dots, h_{\ell})$ where each $h_j \in C^{\infty}_c(\R^{d-1})$ is given by
\begin{equation*}
h_j(\omega) := \psi(10(\omega_{\mathrm{odd}} - a_{j-1})) \psi(\lambda \omega_{\mathrm{even}}),
\end{equation*}
where $\omega_{\mathrm{odd}} \in \R^{(d-1)/2}$ (respectively,  $\omega_{\mathrm{even}} \in \R^{(d-1)/2}$) is the vector formed from the odd (respectively, even) components of $\omega$; see Figure~\ref{comparing examples figure}.

Clearly, for any such $\mathfrak{H}(d,\ell)$ one may bound
\begin{equation}\label{hyp ex 2}
    \|h_j\|_{L^2(\R^{d-1})} \lesssim \lambda^{-(d-1)/4} \qquad \textrm{for $1 \leq j \leq \ell$.}
\end{equation}
On the other hand, if $\omega \in \supp h_j$, then $|\omega_{\mathrm{odd}}| \leq 2$, $|\omega_{\mathrm{even}}| \leq \lambda^{-1}$ and $|Q(\omega)| \leq \lambda^{-1}$. Thus, 
\begin{equation}\label{hyp ex 3}
    |E_Qh_j(x,t)| \gtrsim \lambda^{-(d-1)/2} \qquad \textrm{for $(x,t) \in \Pi_d(\lambda)$}
\end{equation}
where $\Pi_d(\lambda)$ is the rectangular region
\begin{equation*}
  \Pi_d(\lambda)  := [-c,c]^{(d-1)/2} \times [-c\lambda,c\lambda]^{(d+1)/2}
\end{equation*}
for $c > 0$ a sufficiently small dimensional constant.


%


\begin{figure}
    \centering
      \begin{subfigure}[b]{0.48\textwidth}
        \includegraphics[width=\textwidth]{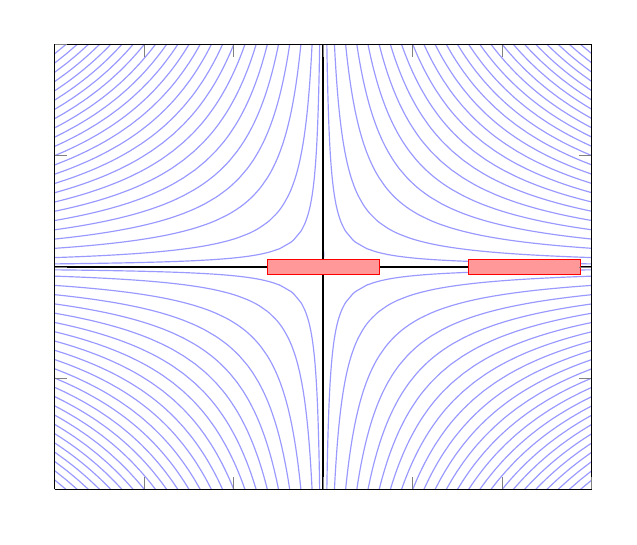}
    \end{subfigure}
\quad
    \begin{subfigure}[b]{0.48\textwidth}
        \includegraphics[width=\textwidth]{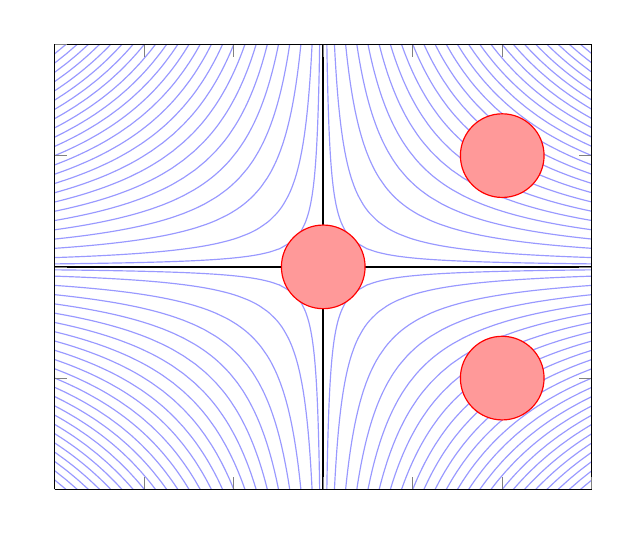}
         \end{subfigure}

    \caption{The \textit{hyperbolic} (left) and \textit{elliptic} (right) examples. In the hyperbolic case, slabs of thickness $\lambda^{-1}$ and width 1 are placed in relation to a linear subspace contained in the hyperbolic paraboloid (in this case the horizontal axis). In the elliptic case (which is also applied in the case of hyperbolic $Q$) a family of $\lambda^{-1/2}$ balls are placed around centres spanning a non-degenerate simplex. Since there is more freedom to place the balls in the elliptic case, it applies at higher levels of multilinearity.}
    \label{comparing examples figure}
\end{figure}


%





\subsection{Elliptic example}\label{ell ex section} The second example corresponds to the sharp example for $L^2$-based multilinear restriction for the elliptic paraboloid. It is a direct generalisation of the bilinear example described, for instance, in \cite{Tao2004}. This example will be applied in both elliptic and hyperbolic cases, but nevertheless is referred to as the \textit{elliptic} example to distinguish it from the hyperbolic example described above. 

For $1 \leq \sigma \leq d-1$ let
\begin{equation*}
    Q(\omega) := \sum_{j=1}^{\frac{d-1-\sigma}{2}} \omega_{2j-1} \omega_{2j} + \frac{1}{2}\sum_{j=d-\sigma}^{d-1} \omega_j^2. 
\end{equation*}
be a quadratic form in $d-1$ variables of signature $\sigma$. In contrast with the hyperbolic case, here the choice of $\sigma$ is not relevant to the numerology arising from the elliptic example. Note that this form agrees with the form $Q_{\sigma}$ from Example~\ref{prototypical example} after an orthogonal coordinate transformation.

 Let $G_{Q,0} \colon \R^{d-1} \to \R^d$ denote the (non-normalised) Gauss map $G_{Q,0}(\omega) :=  (-\partial_{\omega} Q(\omega), 1)^{\top}$ associated to $Q$. Fix $b_j \in \R^{d-1}$ for $0 \leq j \leq d-1$ satisfying $b_0=0$, $|b_j| \leq 1/2$ and
\begin{equation}\label{ell ex 1}
    |b_1 \wedge \dots \wedge b_{d-1}| \gtrsim 1.
\end{equation} 
For $1 \leq \ell \leq d$ let $V_{\ell}$ denote the $\ell$-dimensional subspace of $\R^d$ given by
\begin{equation*}
    V_{\ell} := \big\langle
  G_{Q,0}(b_j)
    \colon
   0 \leq j \leq \ell-1 \big \rangle.
\end{equation*}
For $C \geq 1$ a suitably large dimensional constant and given $\lambda \geq 1$, define $\mathcal{V}_\ell$ to be a maximal $C\lambda^{1/2}$-separated set in $V_\ell\cap \mathbb{R}^{n-1}\times\{0\}\cap B(0,\lambda)$.

The \textit{$\ell$-linear elliptic example in $\R^d$} is the $\ell$-tuple of functions $\mathfrak{G}(d,\ell) := (g_1, \dots, g_{\ell})$ where each $g_j \in C^{\infty}_c(\R^{d-1})$ is given by
\begin{equation*}
g_j := \sum_{v \in \mathcal{V}_{\ell}} g_{j,v} \qquad \textrm{where} \qquad g_{j,v}(\omega) := e^{-2\pi i \inn{v}{\omega - b_{j-1}}}\psi(\lambda^{1/2}(\omega - b_{j-1}))
\end{equation*}
 for $\psi \in C^{\infty}(\R^{d-1})$ a fixed function which is non-negative, supported in the unit ball and equal to 1 in a neighbourhood of the origin; see Figure~\ref{comparing examples figure}.

For any such $\mathfrak{G}(d,\ell)$, using Plancherel, one may bound
\begin{equation}\label{ell ex 3}
    \|g_j\|_{L^2(\R^{d-1})} \lesssim \Big(\sum_{v \in \mathcal{V}_{\ell}} \|g_{j,v}\|_{L^2(\R^{d-1})}^2\Big)^{1/2} \lesssim \lambda^{-(d-\ell)/4} \qquad \textrm{for $1 \leq j \leq \ell$.}
\end{equation}
On the other hand, (non)-stationary phase shows that, on $B(0,\lambda)$, the function $E_Qg_{j,v}$ is rapidly decaying away from the `tube'
\begin{equation*}
    T_{j, v} := \big\{ (x,t) \in B(0,\lambda) : |x - v + t \partial_{\omega}Q(b_{j-1})| \leq c\lambda^{1/2},\,\, |t| \leq \lambda\},
\end{equation*}
where $c > 0$ is a suitable choice of small dimensional constant, and satisfies
\begin{equation}\label{ell ex 4}
   | E_Qg_{j,v}(x,t)| \gtrsim \lambda^{-(d-1)/2} \chi_{T_{j,v}}(x,t).
\end{equation}
In particular, provided $C$ is chosen appropriately in the definition of $\mathcal{V}_{\ell}$, it follows that
\begin{equation}\label{ell ex 5}
    | E_Qg_{j}(x,t)| \gtrsim \lambda^{-(d-1)/2} \sum_{v \in \mathcal{V}_{\ell}} \chi_{T_{j,v}}(x,t).
\end{equation}
The tubes in each family $(T_{j,v})_{v \in \mathcal{V}_{\ell}}$ are pairwise disjoint and their union can be thought of as the intersection of a fixed (that is, independent of $j$) $\ell$-plane slab formed around $V_{\ell}$ of thickness $\lambda^{1/2}$ with $B(0,\lambda)$. More precisely, using the transversality condition \eqref{ell ex 1}, it is not difficult to show that
\begin{equation*}
    \Big|\bigcup_{v_1 \in \mathcal{V}_{\ell}} \cdots \bigcup_{v_{\ell} \in \mathcal{V}_{\ell}} \bigcap_{j=1}^{\ell} T_{j,v_j} \Big| \gtrsim \lambda^{(d+\ell)/2};
\end{equation*}
in particular, the left-hand set contains a union of roughly $\lambda^{\ell/2}$ disjoint balls in $\R^d$ of radius roughly $\lambda^{1/2}$.




\subsection{Tensored examples} To prove Proposition~\ref{sharp multilinear proposition}, the multilinear estimates are tested against examples formed by tensoring the hyperbolic and elliptic examples described above. To this end, fix $1 \leq \sigma \leq n - 1$ with $n-1-\sigma$ even and let
\begin{equation}\label{multi ex 1}
    Q(\omega) := \sum_{j=1}^{\frac{n-1-\sigma}{2}} \omega_{2j-1} \omega_{2j} + \frac{1}{2}\sum_{j=n-\sigma}^{n-1} \omega_j^2.
\end{equation}
be a quadratic form in $n-1$ variables of signature $\sigma$. The multilinear examples subsequently constructed will prove the sharpness of Conjecture \ref{k-linear conjecture} when tested against the extension operator $E_Q$, irrespective of the level $k$ of multilinearity.

Fix $d$ satisfying 
\begin{equation}\label{multi ex 2}
  d \quad \textrm{odd,} \qquad  1 \leq d \leq n- \sigma
\end{equation}
and split the variables $\omega$ and $x$ by writing 
\begin{equation*}
    \omega = (\omega', \omega''), \quad x = (x',x'') \in \R^{d-1} \times \R^{n-d}.
\end{equation*}
The quadratic form is decomposed accordingly by writing 
\begin{equation*}
   Q(\omega) = Q'(\omega') + Q''(\omega''), \qquad Q'(\omega') := Q(\omega',0) \textrm{ and } Q''(\omega'') := Q(0, \omega'').
\end{equation*}
The condition \eqref{multi ex 2} implies that $Q'$ has zero signature, and therefore it makes sense to consider the hyperbolic examples $\mathfrak{h}(d,\ell)$ defined in \S\ref{hyp ex section} applied to this form. Note that, for $h \in C(\R^{d-1})$ and $g \in C(\R^{n-d})$, the tensor product $f := h \otimes g \in C(\R^{n-1})$ satisfies
\begin{equation*}
    E_Qf(x,t) = E_{Q'}h(x',t)E_{Q''}g(x'',t),
\end{equation*}
where $E_Q$, $E_{Q'}$ and $E_{Q''}$ are the extension operators associated to the respective quadratic forms, as defined in Example~\ref{prototypical example}.

Fix $1 \leq k \leq n$ and for $1 \leq \ell \leq k$ satisfying
\begin{equation}\label{multi ex 3}
   1 \leq \ell \leq \frac{d+1}{2} \quad \textrm{and} \quad k - \ell + 1 \leq n - d + 1
\end{equation}
and $\lambda \geq 1$ a large parameter let
\begin{equation*}
    \mathfrak{H}(\ell, d) = (h_1, \dots, h_{\ell}), \qquad \mathfrak{G}(k - \ell + 1, n -d +1) = (g_1, \dots, g_{k - \ell + 1} )
\end{equation*}
be hyperbolic and elliptic examples as defined above. For every level of multilinearity $k$, appropriate $d$ and $\ell$ will be chosen so that tensor products of functions from $\mathfrak{H}(\ell,d)$ and $\mathfrak{G}(k-\ell+1,n-d+1)$ demonstrate the sharpness of Conjecture \ref{k-linear conjecture} for this $k$. The constraints on the parameters in \eqref{multi ex 3} are important:
\begin{itemize}
    \item The first constraint is required in order to carry out the construction of the hyperbolic example $\mathfrak{H}(\ell, d)$ from \S\ref{hyp ex section}. Combined with \eqref{multi ex 2}, it implies that $ \ell \leq (n - \sigma - 1)/2 + 1$, which corresponds to the fact that maximal linear subspaces contained in the graph $\Sigma_Q$ of the form \eqref{multi ex 1} have dimension $(n-\sigma-1)/2$. Furthermore, this constraint will account for the transition in the numerology of Proposition~\ref{sharp multilinear proposition} at $k = (n-\sigma + 1)/2$. 
    \item The second constraint is required in order to carry out the construction of the elliptic example $\mathfrak{G}(k - \ell + 1, n -d +1)$ from \S\ref{ell ex section}. This constraint will account for the transition in the numerology of Proposition~\ref{sharp multilinear proposition} at $k = (n+\sigma + 1)/2$. 
\end{itemize} 

Define $k$ functions
\begin{align*}
    \mathfrak{h}_i &:= h_i \otimes g_1 \colon \R^{n-1} \to \C \qquad \textrm{for $1 \leq i \leq \ell$,} \\
    \mathfrak{g}_j &:= h_1 \otimes g_j \colon \R^{n-1} \to \C \qquad \textrm{for $2 \leq j \leq k - \ell + 1$.}
\end{align*}
In order to apply these examples in the proof of Proposition~\ref{sharp multilinear proposition}, the supports of the $\mathfrak{h}_i$ and $\mathfrak{g}_j$ functions must satisfy the transversality hypothesis. Since the supports of these functions are well-separated, it suffices to check the transversality condition at the centres of the supports only. Given $\omega = (\omega_1, \dots, \omega_{(d-1)/2}) \in \R^{(d-1)/2}$, let $\uparrow \!\!\omega \in \R^{d-1}$ denote the vector 
\begin{equation*}
    \uparrow \!\!\omega := (\omega_1, 0, \omega_2, 0, \dots, \omega_{(d-1)/2}, 0)
\end{equation*}
and note that
\begin{itemize}
    \item $\supp \mathfrak{h}_i$ is centred around $(\uparrow \!\!a_{i-1}, 0)^{\top} \in \R^{d-1} \times \R^{n-d}$,
    \item $\supp \mathfrak{g}_j$ is centred around $(0, b_{j-1})^{\top} \in \R^{d-1} \times \R^{n-d}$.
\end{itemize}
Computing the values of the Gauss map applied to these vectors, forming the relevant matrix and rearranging the rows, it suffices to show that the $n \times k$ matrix\footnote{The numbers outside the matrix represent the numbers of columns or rows in each block.}
\begin{equation*}
\begin{blockarray}{cccc}
    \begin{block}{[ccc]c}
    \mathbf{0} & \mathbf{0} & \mathbf{0} & {\scriptstyle \color{blue} \frac{d-1}{2}} \\
    \mathbf{0} & \mathbf{A} & \mathbf{0} & {\scriptstyle \color{blue} \frac{d-1}{2}} \\
    \mathbf{0} & \mathbf{0} & \mathbf{B} & {\scriptstyle \color{blue} n - d} \\
    1 & \mathbf{1} & \mathbf{1} & {\scriptstyle \color{blue}  1 } \\
\end{block}
{\scriptstyle \color{blue}  1} & {\scriptstyle \color{blue}  \ell - 1} & {\scriptstyle \color{blue}  k - \ell} & 
\end{blockarray}    
\end{equation*}
has full rank, where
\begin{equation*}
    \mathbf{A} \in \mathrm{Mat}\big(\tfrac{d-1}{2},\ell-1\big) \qquad \textrm{and} \qquad \mathbf{B} \in \mathrm{Mat}(n-d,k - \ell)
\end{equation*}
are the matrices whose columns are formed by the vectors $(-a_1, \dots, -a_{l-1})$ and $(-b_1, \dots, -b_{k-\ell})$, respectively. The desired rank condition is immediate from the choices of $a_i$ and $b_j$ and, in particular, \eqref{hyp ex 1} and \eqref{ell ex 1}. 

For now, suppose that the $k$-linear inequality 
\begin{equation}\label{multi ex 4}
\Big\|\prod_{i=1}^{\ell}|E_Q\mathfrak{h}_i|^{1/k}\prod_{j=2}^{k-\ell+1}|E_Q\mathfrak{g}_i|^{1/k}\Big\|_{L^p(B(0,\lambda))} \lesssim \prod_{i=1}^{\ell}\|\mathfrak{h}_i\|_{L^2(\R^{n-1})}^{1/k}\prod_{j=2}^{k-\ell+1}\|\mathfrak{g}_i\|_{L^2(\R^{n-1})}^{1/k}
\end{equation}
holds uniformly in $\lambda$. Presently, it is shown that, for appropriately chosen $d$, this forces
\begin{equation}\label{multi ex 5}
  p \geq q(n, k,\ell) \quad \textrm{where} \quad q(n,k,\ell) := 2 \cdot \frac{n + k - \ell + 1}{n + k - \ell -1}. 
\end{equation}
Plugging the optimal values of $\ell$ into the formula for $q(n, k,\ell)$ yields the desired range of $p$ described in Proposition~\ref{sharp multilinear proposition}. In particular, to maximise $q(n,k,\ell)$ one should choose $\ell$ as large as possible, under the condition that \eqref{multi ex 2} and \eqref{multi ex 3} should hold for some $d$. The correct choices of $\ell$ and $d$, which depend on the $k$ regime, are tabulated in Figure~\ref{l d table}. 

\begin{figure}
    \centering
       {\tabulinesep=1.2mm
     \begin{tabu}{|[1.5pt]c|[1.5pt]c|[1.5pt]c|[1.5pt]c|[1.5pt]}
    \tabucline[1.5pt]{-}
    \rowcolor{gray!25}
    $k$ range  & $\ell$ & $d$ & $q(n,k,\ell)$ \\
    \tabucline[1.5pt]{-}   
    $  \displaystyle 1 \leq k \leq \frac{n-\sigma+1}{2}$     & $k$ & $n- \sigma$ & $\displaystyle 2 \cdot \frac{n+1}{n-1}$ \\
     \hline     
    $\displaystyle \frac{n-\sigma+1}{2} \leq k \leq \frac{n+1 + \sigma}{2}$     &  $\displaystyle \frac{n-\sigma+1}{2}$ & $n- \sigma$ & $\displaystyle 2 \cdot \frac{n+2k + \sigma + 1}{n + 2k + \sigma -3}$ \\  
     \hline     
    $\displaystyle \frac{n+1 + \sigma}{2} \leq k \leq n$     & $n-k+1$ & $2n - 2k + 1$ & $\displaystyle 2 \cdot \frac{k}{k-1}$ \\
     \tabucline[1.5pt]{-}
    \end{tabu}}
    \caption{The value of $q(n,k,\ell)$ is obtained by substituting the corresponding $\ell$ value into the formula in \eqref{multi ex 5}. In all cases, $\ell$ and $d$ are chosen so as to satisfy \eqref{multi ex 2} and \eqref{multi ex 3}.}  
    \label{l d table}
\end{figure}

The first step is to obtain a lower bound for the expression on the left-hand side of \eqref{multi ex 4}. One may write the funtion appearing in the $p$-norm as a product of two functions 
\begin{align*}
 H(x',t) &:= |E_{Q'}h_1(x',t)|^{(k-\ell+1)/k} \prod_{i = 2}^{\ell} |E_{Q'}h_i(x',t)|^{1/k} \qquad \textrm{for $(x',t) \in \R^d$},\\
 G(x'',t) &:= |E_{Q''}g_1(x'',t)|^{\ell/k} \prod_{j = 2}^{k-\ell+1} |E_{Q''}g_j(x'',t)|^{1/k}\qquad \textrm{for $(x'',t) \in \R^{n-d+1}$}.
\end{align*}
Apply \eqref{hyp ex 3} at multilinearity $\ell$ and dimension $d$ to each factor in $H$ to deduce that
\begin{equation}\label{multi ex 6}
    H(x',t) \gtrsim \lambda^{-(d-1)/2} \chi_{\Pi_d(\lambda)}(x',t)
\end{equation}
On the other hand, apply \eqref{ell ex 4} at multilinearity $k - \ell + 1$ and dimension $n - d +1$ to each factor in $G$ to deduce that 
\begin{equation}\label{multi ex 7}
    G(x'',t) \gtrsim \lambda^{-(n-d)/2} \prod_{j=1}^{k-\ell+1} \sum_{v \in \mathcal{V}_{\ell}} \chi_{T_{j,v}}(x'',t),
\end{equation}
using the fact that the tubes $T_{j, v}$ are pairwise disjoint as $v$ varies over $\mathcal{V}_\ell$. Combining these observations,
\begin{equation}\label{multi ex 8}
    \Big\|\prod_{i=1}^{\ell}|E_{Q}\mathfrak{h}_i|^{1/k}\prod_{j=2}^{k-\ell+1}|E_{Q}\mathfrak{g}_i|^{1/k}\Big\|_{L^p(B(0,\lambda))} \gtrsim \lambda^{-(n-1)/2}\lambda^{(d-1)/2p+(n+k - d - \ell +2)/2p}.
\end{equation}
where:
\begin{itemize}
    \item the $\lambda^{-(n-1)/2}$ factor is the product of the coefficients from \eqref{multi ex 6} and \eqref{multi ex 7},
    \item the $\lambda^{(d-1)/2p}$ factor corresponds to the $L^p_{x'}$-norm of the characteristic function in \eqref{multi ex 6},
    \item the $\lambda^{(n+k - d - \ell +2)/2p}$ factor arises from \eqref{multi ex 7} owing to \eqref{ell ex 5}.
    \end{itemize}  

The right-hand side of \eqref{multi ex 4} is now bounded from above. In particular, by exploiting the tensor structure and applying the bounds \eqref{hyp ex 2} and \eqref{ell ex 3}, 
\begin{align}
\nonumber
    \prod_{i=1}^{\ell}\|\mathfrak{h}_i\|_2^{1/k}\prod_{j=2}^{k-\ell+1}\|\mathfrak{g}_i\|_2^{1/k} &= 
    \|h_1\|_2^{(k-\ell+1)/k}\Big(\prod_{i=2}^{\ell}\|h_i\|_2^{1/k}\Big)\|g_1\|_2^{\ell/k}\Big(\prod_{j=2}^{k - \ell + 1}\|g_j\|_2^{1/k}\Big) \\
    \label{multi ex 9}
    &\lesssim \lambda^{-(d-1)/4} \lambda^{-(n-k-d+\ell)/4}.
\end{align}
Note that, as before, \eqref{hyp ex 2} is applied at multilinearity $\ell$ and dimension $d$ whilst \eqref{ell ex 3} is applied with multilinear $k - \ell + 1$ and dimension $n - d +1$.

Plugging \eqref{multi ex 9} and \eqref{multi ex 8} into \eqref{multi ex 4} one concludes that
\begin{equation*}
\lambda^{-(n-1)/2} \lambda^{(n+k - \ell +1)/2p}  \lesssim \lambda^{-(n-1)/4}\lambda^{(k-\ell)/4}.
\end{equation*}
Since the inequality is assumed to hold for all large $\lambda$, this forces the condition described in \eqref{multi ex 5}.




\section{Proof of Theorems~\ref{main theorem} and~\ref{k-broad theorem}: Preliminaries}\label{preliminaries section}




\subsection{Overview} The remainder of the article deals with the proof of the $k$-broad estimates from Thereom ~\ref{k-broad theorem} and the passage from $k$-broad to linear estimates used to establish Theorem~\ref{main theorem}. In this section a variety of definitions and basic results are recalled from the literature (primarily \cite{Guth2018} and \cite{GHI2019}), which will be used throughout the remainder of the paper. In particular:
\begin{itemize}
    \item In \S\ref{basic geometry section} the underlying geometry of H\"ormander-type operators is discussed.
      \item In \S\ref{reductions section} the notation of a \textit{reduced phase} is introduced, and various technical reductions are described.
    \item In \S\ref{wave packet section} the wave packet decomposition for H\"ormander-type operators is recounted. 
\end{itemize}
The treatment here is rather brief and readers new to these concepts are encouraged to consult \cite{Guth2018} or \cite{GHI2019} for further details.




\subsection{Variable coefficient operators: basic geometry}\label{basic geometry section} Consider a smooth phase function $\phi \colon B^n \times B^{n-1} \to \R$ satisfying H1) and H2) from the introduction. Fixing $\bar{x} \in B^n$, the condition H1) implies that the mapping
\begin{equation*}
    \Sigma_{\bar{x}} := \big\{ \partial_x \phi(\bar{x};\omega) : (\bar{x};\omega) \in \supp a\}
\end{equation*}
is a (compact piece of) a smooth hypersurface in $\R^n$. Furthermore, the condition H2) implies that for each $\bar{x}$ the corresponding hypersurface has non-vanishing Gaussian curvature. After further localisation and a suitable coordinate transformation, the condition H1) ensures the existence of a local diffeomorphism $\Psi_{\bar{x}}$ on $\R^{n-1}$ such that
\begin{equation*}
    \partial_{x'}\phi(\bar{x};\Psi_{\bar{x}}(u)) = u \qquad \textrm{for all $u \in \mathrm{Domain}(\Psi_{\bar{x}})$}
\end{equation*}
In particular, the map $\Psi_{\bar{x}}$ corresponds to a graph reparametrisation of the hypersurface $\Sigma_{\bar{x}}$, with graphing function
\begin{equation*}
    h_{\bar{x}}(u) := \partial_{x_n}\phi(\bar{x};\Psi_{\bar{x}}(u)).
\end{equation*}
Throughout the remainder of the paper, it is always assumed that any H\"ormander-type operator with phase $\phi$ is suitably localised and that coordinates are chosen so that the above functions are defined globally on the support of the amplitude.

In view of the rescaled phase and amplitude functions appearing in the definition of $T^{\lambda}$, given $\lambda \geq 1$ and $\bar{x} \in B(0,\lambda)$ define $\Sigma_{\bar{x}}^{\lambda} := \Sigma_{\bar{x}/\lambda}$,  $\Psi_{\bar{x}}^{\lambda} := \Psi_{\bar{x}/\lambda}$ and $h_{\bar{x}}^{\lambda} := h_{\bar{x}/\lambda}$. Similarly, define the rescaled generalised Gauss map 
\begin{equation*}
G^{\lambda}(x; \omega) := G(x/\lambda;\omega) \qquad \textrm{for $(x;\omega) \in \mathrm{supp}\, a^{\lambda}$},
\end{equation*}
taking $G$ to be as defined in condition H2) from the introduction. Since the mapping $\Psi_{\bar{x}}^{\lambda}$ corresponds only to a change of coordinates, it follows that $G^{\lambda}(\bar{x};\omega)$ is parallel to the vector 
\begin{equation*}
    \begin{pmatrix}
    -\partial_u h_{\bar{x}}^{\lambda}(u) \\
    1
    \end{pmatrix}
\end{equation*}
for $u$ satisfying $\Psi_{\bar{x}}^{\lambda}(u) = \omega$. 




\subsection{Reductions}\label{reductions section}
To prove Theorem~\ref{main theorem} for all H\"ormander-type operators with phases of a given signature $\sigma$, one needs only to consider operators which are perturbations of the prototypical extension operators $E_{\sigma}$ from Example~\ref{prototypical example}. In particular, recall that the H\"ormander-type operators under consideration are those of the form
\begin{equation*}
    T^\lambda f(x)=\int_{B^{n-1}}e^{2\pi i \phi^\lambda(x;\omega)}a^\lambda(x;\omega)f(\omega){\rm d}\omega,
\end{equation*}
where the phase $\phi$ satisfies the general conditions H1) and H2). For any $0\leq \sigma\leq n-1$ with $n-1-\sigma$ even, let $\mathrm{I}_{n-1, \sigma}$ denote the $(n-1)\times(n-1)$ matrix of signature $\sigma$ from Example~\ref{prototypical example}.

\begin{lemma}\label{reduction lemma} Let $0\leq \sigma\leq n$ with $n-1-\sigma$ even and $\varepsilon>0$. To prove Theorem~\ref{main theorem} for this fixed $\varepsilon > 0$ for all H\"ormander-type operators with phase function of signature $\sigma$, it suffices to consider the case where the amplitude $a$ is supported on $X \times \Omega$, where $X := X' \times X_n$ and $X' \subset B^{n-1}$, $X_n \subset B^1$ and $\Omega \subset B^{n-1}$ are small balls centred at 0 upon which the phase $\phi$ has the form
\begin{equation*}
\phi(x; \omega) = \langle x', \omega \rangle + x_nh(\omega) + \mathcal{E}(x;\omega).
\end{equation*}
Here $h$ and $\mathcal{E}$ are smooth functions, $h$ is quadratic in $\omega$ and $\mathcal{E}$ is quadratic in $x$ and $\omega$.\footnote{Explicitly, if $(\alpha, \beta) \in \N_0 \times \N_0^{n-1}$ is a pair of multi-indices, then:
\begin{enumerate}[i)]
\item $\partial^{\beta}_{\omega}h(0) = \partial^{\beta}_{\omega}\partial^{\alpha}_x\mathcal{E}(x;0)=0$ whenever $x \in X$ and $|\beta| \leq 1$;
\item  $\partial^{\beta}_{\omega}\partial^{\alpha}_x\mathcal{E}(0;\omega)=0$ whenever $\omega \in \Omega$ and $|\alpha| \leq 1$.
\end{enumerate}} Furthermore, letting $c_{\mathrm{ex}} > 0$ be a small constant, which may depend on the admissible parameters $n$, $p$ and $\varepsilon$, one may assume that the phase function $\phi$ satisfies
\begin{gather*}
\|\partial_{\omega x'}^2 \phi(x; \omega) - \mathrm{I}_{n-1}\|_{\mathrm{op}} < c_{\mathrm{ex}}, \quad |\partial_{\omega}\partial_{x_n} \phi(x; \omega)| < c_{\mathrm{ex}}, \\
\|\partial_{\omega \omega}^2\partial_{x_k} \phi(x; \omega) - \delta_{kn}\mathrm{I}_{n-1,\sigma}\|_{\mathrm{op}} < c_{\mathrm{ex}}
\end{gather*}
 for all $(x;\omega) \in X \times \Omega$ and $1\leq k \leq n$. In addition, 
 \begin{equation}\label{derivative reduction}
\|\partial_{\omega}^{\beta}\partial_x^{\alpha} \phi \|_{L^{\infty}(X \times \Omega)} < c_{\mathrm{ex}} \qquad \textrm{for $1 \leq |\alpha| \leq N_{\mathrm{ex}}$, $3 \leq |\beta| \leq N_{\mathrm{ex}}$.}
\end{equation}
for some large integer $N_{\mathrm{ex}} \in \N$, which can be chosen to depend on $n$, $p$ and $\varepsilon$. If $|\alpha| \geq 2$, then the lower bound on $|\beta|$ can be relaxed to 0 in \eqref{derivative reduction}. Finally, it may assumed that the amplitude $a$ satisfies
\begin{equation*}
\|\partial_{\omega}^{\beta} \partial_x^{\alpha}a\|_{L^{\infty}(X \times \Omega)} \lesssim_{\beta} 1 \qquad \textrm{for all $0 \leq |\alpha|,|\beta| \leq N_{\mathrm{ex}}$.}
\end{equation*}
\end{lemma}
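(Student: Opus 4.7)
The plan is to deduce the lemma by a sequence of reductions, each of which either preserves $L^p \to L^p$ mapping norms exactly (translations, modulations of $f$ and of $T^{\lambda}f$) or rescales $\lambda$ by a bounded factor (linear and parabolic changes of variable). Under either type of reduction the estimate \eqref{Hormander estimate} for the reduced operator implies the same estimate for the original one, up to inessential constants and a harmless multiplicative factor in the $\lambda^{\varepsilon}$ loss.

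First I would localize and affine-normalize at a point. Using a smooth partition of unity, cover $\mathrm{supp}\, a$ by $O(1)$ small boxes and fix one piece with centre $(x_0;\omega_0)$. Translate so that this centre becomes the origin, and modulate $f$ and $T^{\lambda}f$ by appropriate unimodular factors to remove the constant term and the terms of $\phi$ that are linear in $x$ alone or in $\omega$ alone. Hypothesis H1) says that $\partial^2_{\omega x}\phi(0;0) \in \mathrm{Mat}(n-1,n)$ has rank $n-1$, so an invertible linear change in $\omega$ together with an invertible linear change in $x'$ normalize $\partial^2_{\omega x'}\phi(0;0) = \mathrm{I}_{n-1}$; a shear $x' \mapsto x' - x_n v$ with $v := \partial_\omega \partial_{x_n}\phi(0;0)$ then additionally achieves $\partial_\omega\partial_{x_n}\phi(0;0) = 0$.

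At this stage $\omega \mapsto \partial_{x'}\phi(0;\omega)$ is a local diffeomorphism at $0$ (its differential is $\mathrm{I}_{n-1}$), so one may use it as a new $\omega$-coordinate; this makes $\partial_{x'}\phi(0;\omega) = \omega$ hold identically, and subtracting the pure $x$-independent term $\phi(0;\omega)$ (an $L^p$-preserving modulation of $f$) makes $\phi(0;\omega) = 0$. Setting $h(\omega) := \partial_{x_n}\phi(0;\omega)$ then yields the desired decomposition $\phi(x;\omega) = \langle x',\omega\rangle + x_n h(\omega) + \mathcal{E}(x;\omega)$ with $\mathcal{E}$ quadratic in $x$ by construction and $h$ quadratic in $\omega$ by the previous normalizations. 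Moreover, after these reductions the generalised Gauss map satisfies $G(0;0) = e_n$, so H2) becomes the statement that $\partial^2_{\omega\omega}h(0)$ is a non-degenerate symmetric matrix of signature $\sigma$; an orthogonal change of $\omega$ (with the corresponding transpose-inverse change of $x'$ that preserves $\langle x',\omega\rangle$), followed by a diagonal rescaling of each $\omega_j$ (again compensated in $x'$), brings this Hessian to the form $\mathrm{I}_{n-1,\sigma}$ without disturbing the structure of the phase.

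Finally I would apply a parabolic rescaling $\tilde{x}' := rx'$, $\tilde{x}_n := r^2 x_n$, $\tilde{\omega} := r\omega$ (so $\lambda$ is replaced by $r^2\lambda$) with $r$ small, depending on the prescribed $c_{\mathrm{ex}}$, the integer $N_{\mathrm{ex}}$ and the $C^{N_{\mathrm{ex}}}$-norm of the original phase. The principal quadratic part $\langle x',\omega\rangle + x_n h(\omega)$ is exactly preserved by this scaling, so the structural form from the previous step persists; meanwhile, each term in the Taylor expansion of $\mathcal{E}$ vanishes to strictly higher degree in $x$ or in $\omega$ than the corresponding principal monomial, and hence every derivative entering the smallness conditions collects at least one positive power of $r$ and can be made $< c_{\mathrm{ex}}$. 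The amplitude $a$ simultaneously localizes into a ball of radius $\sim r$, producing the claimed supports $X \times \Omega$, and the reduced amplitude satisfies the claimed derivative bounds since the original $a$ lies in $C^{\infty}_c$. Since $r$ depends only on the admissible parameters $n$, $p$, $\varepsilon$, the change $\lambda \mapsto r^2\lambda$ alters $\lambda^{\varepsilon}$ by a bounded factor only. The main obstacle is a careful term-by-term verification in this last step that each derivative appearing in the smallness hypotheses indeed picks up at least one factor of $r$; this relies on the precise vanishing structure of $\mathcal{E}$ arranged in the first two steps and on the exact homogeneity of the principal part under the parabolic rescaling.
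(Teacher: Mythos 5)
Your overall strategy — localisation, affine normalisation using H1) and H2), modulations to kill the pure-$x$ and pure-$\omega$ parts of the phase, a change of the $\omega$-variable, and a final rescaling — is the strategy of Lemmas 4.1 and 4.3 of \cite{GHI2019}, to which the paper defers. But as written there are two concrete gaps. The first concerns the condition that $\mathcal{E}$ be quadratic in $\omega$, i.e.\ $\mathcal{E}(x;0)=\partial_\omega\mathcal{E}(x;0)=0$ for \emph{all} $x\in X$ (footnote condition i)). Your normalisations only give $\partial_\omega\phi(x;0)=x'+O(|x|^2)$: the modulations control $\phi(x;0)$ and $\phi(0;\omega)$, the linear normalisation fixes $\partial^2_{\omega x}\phi$ at the single point $(0;0)$, and your $\omega$-reparametrisation controls the phase only along $\{x=0\}$. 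The missing ingredient is a (nonlinear) change of the \emph{spatial} variable, $y':=\partial_\omega\phi(x;0)$, $y_n:=x_n$, a local diffeomorphism by H1), which straightens $\partial_\omega\phi(\,\cdot\,;0)$ to exactly $x'$. This is not cosmetic: it is part of the conclusion of the lemma, and your final step relies on exactly this vanishing — without it $\mathcal{E}$ may contain monomials such as $x_n^2\omega_j$, which none of your operations make small.

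The second gap is in the rescaling step itself. The map $(\tilde x',\tilde x_n,\tilde\omega)=(rx',r^2x_n,r\omega)$ does \emph{not} preserve the principal part: $\langle x',\omega\rangle$ and $x_nQ(\omega)$ pick up $r^{-2}$ and $r^{-4}$ respectively, so the contraction in $x$ must be paired with a dilation in $\omega$ (one restricts $f$ to an $r$-ball in $\omega$ and rescales that ball to unit size). More seriously, even the correctly oriented parabolic rescaling cannot produce all of the smallness conditions: a monomial $x^{\alpha}\omega^{\beta}$ acquires the factor $r^{|\beta|-|\alpha'|-2\alpha_n}$, which is a \emph{negative} power of $r$ for, e.g., $\alpha=2e_n$ and $|\beta|=2$ — precisely the derivatives covered by the relaxed clause ``$|\alpha|\geq 2$, $|\beta|\geq 0$'' in \eqref{derivative reduction}. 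Those derivatives are instead controlled by exploiting the identity $\phi^{\lambda}(x;\omega)=\lambda\phi(x/\lambda;\omega)$: replacing $\phi$ by $\epsilon^{-1}\phi(\epsilon\,\cdot\,;\omega)$ and $\lambda$ by $\epsilon\lambda$ leaves the operator unchanged while multiplying every $x$-derivative of order $|\alpha|$ by $\epsilon^{|\alpha|-1}$; this isotropic spatial rescaling, combined with continuity/localisation for the second- and third-order mixed conditions and the parabolic $\omega$-rescaling for $|\beta|\geq 3$, is what closes the argument. A single parabolic rescaling will not, no matter how carefully the term-by-term verification you defer to is carried out.
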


The proof of Lemma~\ref{reduction lemma} is a simple adaptation of the proofs of Lemma 4.1 and Lemma 4.3 in \cite{GHI2019} (which describe the case $\sigma=n-1$) and is thus omitted here. 

\begin{definition} Henceforth $c_{\mathrm{ex}} > 0$ and $N_{\mathrm{ex}} \in \N$ are assumed to be fixed constants (which are allowed to depend only on admissible parameters), chosen to satisfy the requirements of the forthcoming arguments. A phase of signature $\sigma$ satisfying the properties of Lemma~\ref{reduction lemma} for this choice of $\sigma$, $c_{\mathrm{ex}}$ and $N_{\mathrm{ex}}$ is said to be \emph{reduced}.
\end{definition}

\subsection{Wave packet decomposition}\label{wave packet section} The wave packet decomposition from \cite{GHI2019} is now reviewed and some notation is established. All statements in this subsection are proved in \cite{GHI2019}.

Throughout the following sections $\varepsilon > 0$ is a fixed small parameter and $\delta > 0$ is a tiny number satisfying\footnote{For $A, B \geq 0$ the notation $A \ll B$ or $B \gg A$ is used to denote that $A$ is `much smaller' than $B$; a more precise interpretation of this is that $A \leq C_{\varepsilon}^{-1}B$ for some constant $C_{\varepsilon} \geq 1$ which can be chosen to be large depending on $n$ and $\varepsilon$.} $\delta \ll \varepsilon$ and $\delta \sim_{\varepsilon} 1$. For any spatial parameter satisfying $1\ll R\ll\lambda$, a wave packet decomposition at scale $R$ is carried out as follows. Cover $B^{n-1}$ by finitely-overlapping balls $\theta$ of radius $R^{-1/2}$ and let $\psi_{\theta}$ be a smooth partition of unity adapted to this cover. These $\theta$ are referred to as $R^{-1/2}$-\emph{caps}. Cover $\R^{n-1}$ by finitely-overlapping balls of radius $CR^{(1+\delta)/2}$ centred on points belonging to the lattice $R^{(1+\delta)/2}\Z^{n-1}$. By Poisson summation one may find a bump function adapted to $B(0, R^{(1+\delta)/2})$ so that the functions $\eta_v(z) := \eta(z- v)$ for $v \in R^{(1+\delta)/2}\Z^{n-1}$ form a partition of unity for this cover. Let $\mathbb{T}$ denote the collection of all pairs $(\theta, v)$. Thus, for $f \colon \R^{n-1} \to \C$ with support in $B^{n-1}$ and belonging to some suitable \emph{a priori} class one has
\begin{equation*}
f = \sum_{(\theta, v) \in \mathbb{T}} (\eta_v(\psi_{\theta}f)\;\widecheck{}\;)\;\widehat{}\; = \sum_{(\theta, v) \in \mathbb{T}} \hat{\eta}_v \ast(\psi_{\theta}f).
\end{equation*} 
For each $R^{-1/2}$-cap $\theta$ let $\omega_{\theta} \in B^{n-1}$ denote its centre. Choose a real-valued smooth function $\tilde{\psi}$ so that the function $\tilde{\psi}_{\theta}(\omega) := \tilde{\psi}(R^{1/2}(\omega - \omega_{\theta}))$ is supported in $\theta$ and $\tilde{\psi}_{\theta}(\omega) = 1$ whenever $\omega$ belongs to a $cR^{-1/2}$ neighbourhood of the support of $\psi_{\theta}$ for some small constant $c > 0$. Finally, define
\begin{equation*}
f_{\theta, v} := \tilde{\psi}_{\theta} \cdot [\hat{\eta}_v \ast (\psi_{\theta}f)]. 
\end{equation*}
It is not difficult to show
\begin{equation*}
\|f - \sum_{(\theta, v) \in \T}f_{\theta, v}\|_{L^{\infty}(\R^{n-1})} \leq \mathrm{RapDec}(R)\|f\|_{L^2(B^{n-1})},
\end{equation*}
whilst the functions $f_{\theta, v}$ are also almost orthogonal: if $\mathbb{S} \subseteq \T$, then
\begin{equation*}
\big\| \sum_{(\theta, v) \in \mathbb{S}} f_{\theta, v} \big\|_{L^2(\R^{n-1})}^2 \sim  \sum_{(\theta, v) \in \mathbb{S}} \|f_{\theta, v} \|_{L^2(\R^{n-1})}^2.
\end{equation*}
A precise description of the rapidly decaying term ${\rm RapDec}(R)$, frequently used in forthcoming sections, is inserted here.
\begin{definition}  The notation $ \mathrm{RapDec}(R)$ is used to denote any quantity $C_R$ which is rapidly decaying in $R$. More precisely, $C_R = \mathrm{RapDec}(R)$ if 
\begin{equation*}
    |C_R| \lesssim_{\varepsilon} R^{-N} \qquad \textrm{for all $N \leq \sqrt{N_{\mathrm{ex}}}$,}
\end{equation*}
where $N_{\mathrm{ex}}$ is the large integer appearing in the definition of reduced phase from \S\ref{reductions section}. Note that $N_{\mathrm{ex}}$ may be chosen as large as desired, under the condition that it depends only on $n$ and $\varepsilon$.
\end{definition}
Let $T^{\lambda}$ be an operator with reduced phase $\phi$ and amplitude $a$ supported in $X\times \Omega$ as in Lemma~\ref{reduction lemma}. For $(\theta, v)\in \T$, within $B(0,R)$ the function $T^\lambda f_{\theta,v}$ is essentially supported inside a curved $R^{1/2+\delta}$-tube $T_{\theta,v}$ determined by $\phi$, $\theta$ and $v$. More precisely, there exists a curve 
\begin{equation*}
   \Gamma_{\theta, v}^{\lambda} = (\gamma_{\theta,v}^{\lambda}(\,\cdot\,),\,\cdot\,) \colon I_{\theta,v}^{\lambda} \to \R^n,
\end{equation*}
for some $I_{\theta,v}^\lambda\subset [-\lambda,\lambda]$, that parametrises the set
\begin{equation*}
    \{x\in X:\partial_\omega\phi(x;\omega_\theta)=v\}.
\end{equation*}
This curve $\Gamma_{\theta,v}^\lambda$ forms the core of the tube $T_{\theta,v}$. In particular, for
\begin{equation*}
T_{\theta,v} := \big\{ (x',x_n) \in B(0,R) : x_n \in I_{\theta, v}^{\lambda} \textrm{ and } |x' - \gamma^\lambda_{\theta, v}(x_n)| \leq R^{1/2 + \delta} \big\}
\end{equation*}
the following concentration estimate holds.

\begin{lemma} If $1 \ll R \ll \lambda$ and $x \in B(0,R) \setminus T_{\theta, v}$, then 
\begin{equation*}
|T^\lambda f_{\theta, v}(x)| \leq (1 + R^{-1/2}|\partial_{\omega}\phi^{\lambda}(x;\omega_{\theta}) - v|)^{-(n+1)}\mathrm{RapDec}(R)\|f\|_{L^2(B^{n-1})}. 
\end{equation*}
\end{lemma}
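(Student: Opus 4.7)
The plan is to establish the wave packet concentration estimate by a standard integration-by-parts argument in the $\omega$-variable, exploiting the Fourier localisation of $f_{\theta,v}$ near the frequency $v$. First I would substitute the definition of $f_{\theta,v}$ and interchange the order of integration to write
\begin{equation*}
T^{\lambda}f_{\theta,v}(x) = \int_{\theta} \psi_\theta(\omega')f(\omega')K(x;\omega')\,\ud\omega',
\end{equation*}
where, writing $\Psi(x;\omega):=\phi^\lambda(x;\omega)-\inn{v}{\omega}$,
\begin{equation*}
K(x;\omega') := e^{2\pi i\inn{v}{\omega'}}\int e^{2\pi i\Psi(x;\omega)}a^\lambda(x;\omega)\tilde\psi_\theta(\omega)\hat\eta(\omega-\omega')\,\ud\omega.
\end{equation*}
Cauchy--Schwarz in $\omega'$ then bounds $|T^\lambda f_{\theta,v}(x)|$ by $|\theta|^{1/2}\sup_{\omega'\in\theta}|K(x;\omega')|\cdot\|f\|_{L^2}$, reducing matters to a pointwise bound on $K$.

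To exploit the oscillation, I would analyse $\partial_\omega\Psi(x;\omega)=\partial_\omega\phi^\lambda(x;\omega)-v$. The identity $\phi^\lambda(x;\omega)=\lambda\phi(x/\lambda;\omega)$, together with the vanishing relations built into a reduced phase (forcing $\mathcal{E}(x;\omega)=O(|x|^2)$ and the $\omega$-derivatives of $h$ to be bounded), yields $|\partial^k_\omega\phi^\lambda(x;\omega)|\lesssim R$ for all $k\geq 2$ and $x\in B(0,R)$. A first-order Taylor expansion then gives $|\partial_\omega\Psi(x;\omega)-(\partial_\omega\phi^\lambda(x;\omega_\theta)-v)|\lesssim R^{1/2}$ on $\supp\tilde\psi_\theta$. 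Setting $M:=|\partial_\omega\phi^\lambda(x;\omega_\theta)-v|$, the near-identity bound on $\partial^2_{\omega x'}\phi^\lambda$ from Lemma~\ref{reduction lemma} implies $M\sim |x'-\gamma^\lambda_{\theta,v}(x_n)|$; hence $x\notin T_{\theta,v}$ forces $M\gtrsim R^{1/2+\delta}\gg R^{1/2}$, and so $|\partial_\omega\Psi(x;\omega)|\sim M$ uniformly on $\supp\tilde\psi_\theta$. The standard IBP operator associated to $\Psi$ is therefore non-singular on the integration domain.

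Iterating this IBP operator $N$ times, each application produces a factor at worst $CR^{(1+\delta)/2}/M$: the dominant cost comes from differentiating $\hat\eta(\omega-\omega')$, which lives at scale $R^{-(1+\delta)/2}$; the factor $\tilde\psi_\theta$ contributes at the smaller scale $R^{-1/2}$; and derivatives falling on the coefficient $\partial_\omega\Psi/|\partial_\omega\Psi|^2$ produce factors of order $R/M^2\leq \bigl(R^{(1+\delta)/2}/M\bigr)^2$. The rapid decay of $\hat\eta$ keeps the remaining $\omega$-integral of size $O(1)$, giving $\sup_{\omega'\in\theta}|K(x;\omega')|\leq C^N(R^{(1+\delta)/2}/M)^N$.

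It remains to optimise $N$. Combining with the Cauchy--Schwarz step yields $|T^\lambda f_{\theta,v}(x)|\leq R^{-(n-1)/4}C^N(R^{(1+\delta)/2}/M)^N\|f\|_{L^2}$, and a direct exponent computation (using $M\gtrsim R^{1/2+\delta}$ to convert $M^{-N}$ losses into $R$-gains) shows that for any target decay rate $N_0$ one may choose $N=\lceil 2N_0/\delta\rceil+O_n(1)$ so that
\begin{equation*}
R^{-(n-1)/4}\bigl(CR^{(1+\delta)/2}/M\bigr)^N \leq (R^{1/2}/M)^{n+1}\,R^{-N_0}.
\end{equation*}
Since $\delta\sim_\varepsilon 1$ and $N_{\mathrm{ex}}$ is allowed to depend on $\varepsilon$, any $N_0\leq\sqrt{N_{\mathrm{ex}}}$ is admissible, producing the claimed $\mathrm{RapDec}(R)$ factor. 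The principal technical point is exactly this exponent bookkeeping: the $\delta$-loss incurred by working at scale $R^{(1+\delta)/2}$ (a feature of the wave packet setup) must be compensated by taking $N$ proportionally larger in $1/\delta$, which is what motivates the relation $\delta\ll\varepsilon$ underlying the whole scheme.
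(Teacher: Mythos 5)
The paper does not prove this lemma itself but defers to \cite{GHI2019}, where the argument is exactly the non-stationary phase computation you describe: unpack $f_{\theta,v}$, reduce to a kernel bound by Cauchy--Schwarz, observe that $|\partial_\omega(\phi^\lambda(x;\omega)-\langle v,\omega\rangle)|\gtrsim|\partial_\omega\phi^\lambda(x;\omega_\theta)-v|\gtrsim R^{1/2+\delta}$ on $\supp\tilde\psi_\theta$ for $x\notin T_{\theta,v}$, and integrate by parts $O(N_0/\delta)$ times, with the dominant per-step cost $R^{(1+\delta)/2}/M$ coming from $\hat\eta$. Your exponent bookkeeping and the reduction of the $\mathrm{RapDec}(R)$ claim to the choice of $N_{\mathrm{ex}}$ are correct, so this is essentially the paper's (i.e.\ \cite{GHI2019}'s) proof.
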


The geometry of the core curve of $T_{\theta,v}$ is related to the generalised Gauss map $G^\lambda$ associated to the operator $T^\lambda$: the tangent line $T_{\Gamma^{\lambda}_{\theta,v}(t)}\Gamma^{\lambda}_{\theta,v}$ lies in the direction of the unit vector $G^{\lambda}(\Gamma^{\lambda}_{\theta,v}(t); \omega_{\theta})$ for all $t \in I^{\lambda}_{\theta,v}$. For instance, if $\phi^{\lambda}(x; \omega)$ is of the form $\langle x', \omega \rangle + x_nh(\omega)$, giving rise to an extension operator, then the $T_{\theta,v}$ are straight tubes.




\section{Partial transverse equidistribution estimates}\label{trans eq sec}




\subsection{Overview} In this section the key tool required for the proof of Theorem~\ref{k-broad theorem} is introduced and proved. This is a `partial' transverse equidistribution estimate, which bounds the $L^2$ norm of $T^{\lambda}g$ under certain geometric hypotheses on the wave packets of $g$: see Lemma \ref{trans eq lem 1} below. This lemma generalises the transverse equidistribution estimates for the elliptic case in \cite{Guth2018} and \cite{GHI2019}. It is a key step in the argument where the signature $\mathrm{sgn}(\phi)$ plays a r\^ole. Indeed, once Lemma \ref{trans eq lem 1} is in place, the remainder of the proof of Theorem~\ref{k-broad theorem} follows as in the elliptic case, with only minor numerological changes, as discussed in the following section.




\subsection{Tangential wave packets and transverse equidistribution}\label{tang wp sec} Throughout this section let $T^{\lambda}$ be a H\"ormander-type operator with reduced phase $\phi$ of signature $\sigma$ and for some $1\ll R \ll \lambda$ define the (curved) tubes $T_{\theta,v}$ as in \S\ref{wave packet section}. Here a special situation is considered where $T^{\lambda}g$ is made up of a sum of wave packets which are tangential to some algebraic variety, in a sense described below. To begin, the relevant algebraic preliminaries are recounted. 

\begin{definition} Given any collection of polynomials $P_1, \dots, P_{n-m} \colon \R^n \to \R$ the common zero set
\begin{equation*}
    Z(P_1, \dots, P_{n-m}) := \big\{\,x \in \R^n \,:\, P_1(x) = \cdots = P_{n-m}(x) = 0\,\big\}
\end{equation*}
will be referred to as a \emph{variety}.\footnote{The ideal generated by the $P_j$ is \emph{not} required to be irreducible.} Given a variety $Z = Z(P_1, \dots, P_{n-m})$, define its \emph{(maximum) degree} to be the number 
\begin{equation*}
\overline{\deg}\,Z := \max \{\deg P_1, \dots, \deg P_{n-m} \}.
\end{equation*}
\end{definition}
It will often be convenient to work with varieties which satisfy the additional property that
\begin{equation}\label{non singular variety}
\bigwedge_{j=1}^{n-m}\nabla P_j(z) \neq 0 \qquad \textrm{for all $z \in Z = Z(P_1, \dots, P_{n-m})$.}
\end{equation}
In this case the zero set forms a smooth $m$-dimensional submanifold of $\R^n$ with a (classical) tangent space $T_zZ$ at every point $z \in Z$. A variety $Z$ which satisfies \eqref{non singular variety} is said to be an \emph{$m$-dimensional transverse complete intersection}.

Let $\delta_m$ denote a small parameter satisfying $0 < \delta \ll \delta_m \ll 1$ (here $\delta$ is the same parameter as that which appears in the definition of the wave packets).

\begin{definition} Suppose $Z = Z(P_1, \dots, P_{n-m})$ is a transverse complete intersection. A tube $T_{\theta, v}$ is $R^{-1/2 + \delta_m}$-tangent to $Z$ in $B(0,R)$ if
\begin{equation*}
T_{\theta, v} \subseteq N_{R^{1/2 + \delta_m}}(Z)
\end{equation*}
and
\begin{equation*}
\angle(G^{\lambda}(x; \omega_{\theta}), T_zZ) \leq \bar{c}_{\mathrm{tang}} R^{-1/2 + \delta_m}
\end{equation*}
for any $x \in T_{\theta, v}$ and $z \in Z \cap B(0,2R)$ with $|x - z| \leq\bar{C}_{\mathrm{tang}} R^{1/2 + \delta_m}$.
\end{definition}

Here $\bar{c}_{\mathrm{tang}} > 0$ (respectively, $\bar{C}_{\mathrm{tang}} \geq 1$) is a dimensional constant, chosen to be sufficiently small (respectively, large) for the purposes of the following arguments. 

\begin{definition} If $\mathbb{S} \subseteq \T$, then $f$ is said to be concentrated on wave packets from $\mathbb{S}$ if
\begin{equation*}
f = \sum_{(\theta, v) \in \mathbb{S}} f_{\theta, v} + \mathrm{RapDec}(R)\|f\|_{L^2(B^{n-1})}.
\end{equation*}
\end{definition}

One wishes to study functions concentrated on wave packets from the collection
\begin{equation*}
\T_Z := \big\{(\theta, v) \in \T : T_{\theta, v} \textrm{ is $R^{-1/2 + \delta_m}$-tangent to $Z$ in $B(0,R)$}\big\}.
\end{equation*}

Let $B \subseteq \R^n$ be a fixed ball of radius $R^{1/2 + \delta_m}$ with centre $\bar{x} \in B(0,R)$. Throughout this section the analysis will be essentially confined to a spatially localised operator $\eta_B\cdot T^{\lambda}g $ where $\eta_B$ is a suitable choice of Schwartz function concentrated on $B$. It is remarked that, for any $(\theta, v) \in \T$, a stationary phase argument shows that the Fourier transform of $\eta_B\cdot T^{\lambda}g_{\theta, v}$ is concentrated near the surface
\begin{equation}\label{Sigma def}
\Sigma := \{ \Sigma(\omega) : \omega \in \Omega \} \quad \textrm{where} \quad \Sigma(\omega) := \partial_{x}\phi^{\lambda}(\bar{x};\omega).
\end{equation}
Now consider the refined set of wave packets 
\begin{equation*}
\T_{Z,B} := \big\{(\theta, v) \in \T_Z : T_{\theta, v} \cap B \neq \emptyset \big\}.
\end{equation*}

 Let $R^{1/2} < \rho \ll R$ and throughout this subsection let $\tau \subset \R^{n-1}$ be a fixed cap of radius $O(\rho^{-1/2 + \delta_m})$ centred at a point in $B^{n-1}$. Now define
\begin{equation*}
\T_{Z, B,\tau} := \big\{(\theta, v) \in \T_Z : \theta \cap \tau \neq \emptyset \textrm{ and }  T_{\theta, v} \cap B \neq \emptyset  \big\}.
\end{equation*}
For $1 \leq m \leq n$ denote
\begin{equation*}
   \mu(n,\sigma,m) :=  \max\Big\{n -2 m +1, \frac{n+1+\sigma}{2} - m, 0 \Big\}
\end{equation*}
so that
\begin{equation*}
    \mu(n,\sigma,m) = \left\{\begin{array}{lll}
        n - 2m + 1&  \textrm{if} & 1 \leq m \leq \frac{n-\sigma +1}{2}\\[3pt]
         \frac{n+\sigma+1}{2} -m & \textrm{if} & \frac{n-\sigma + 1}{2} \leq m \leq \frac{n + \sigma +1}{2} \\[3pt]
        0 & \textrm{if} & \frac{n + \sigma +1}{2} \leq m \leq n
    \end{array} \right. .
\end{equation*}

With these definitions, the key partial transverse equidistribution result is as follows. 

\begin{lemma}\label{trans eq lem 1} With the above setup, if $\dim Z = m$ and $\overline{\deg}\,Z \lesssim_{\varepsilon} 1$ and $g$ is concentrated on wave packets from $\T_{Z, B,\tau}$, then
\begin{equation*}
\int_{N_{\rho^{1/2 + \delta_m}}(Z) \cap B} |T^{\lambda}g|^2 \lesssim_{\varepsilon,\delta} R^{1/2+O(\delta_m)}(\rho/R)^{\mu(n,\sigma,m)/2}\|g\|^2_{L^2(B^{n-1})}.
\end{equation*}
\end{lemma}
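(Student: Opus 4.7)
The plan is to generalise the transverse equidistribution argument of \cite{Guth2018, GHI2019} (which covers the maximal signature case) by inserting a signature-sensitive linear algebra input. I proceed as follows.

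First, I would fix a reference point $\bar{x} \in B$ (the centre of $B$) and a point $z_0 \in Z \cap B$, and set $V := T_{z_0} Z$ (an $m$-dimensional subspace) and $\omega_0 \in B^{n-1}$ equal to the centre of $\tau$. The hypothesis that $g$ is concentrated on wave packets from $\T_{Z,B,\tau}$ implies that each $\omega_\theta$ appearing effectively in the wave packet sum lies within $O(R^{-1/2+\delta_m})$ of the $(m-1)$-dimensional manifold $A := \{\omega : G^{\lambda}(\bar{x}, \omega) \in V\}$ and within $O(\rho^{-1/2+\delta_m})$ of $\omega_0$. A standard stationary phase / wave packet calculation then shows that the spatially localised function $\eta_B \cdot T^{\lambda} g$ has Fourier transform essentially supported in a rectangular slab around $\Sigma(\omega_0)$, of side length $\rho^{-1/2+\delta_m}$ in the $(m-1)$ directions spanned by $d\Sigma(\omega_0)(T_{\omega_0}A) \subseteq T_{\Sigma(\omega_0)}\Sigma$, side length $R^{-1/2+\delta_m}$ in the remaining $(n-m)$ tangent-to-$\Sigma$ directions, and side length $\lesssim R^{-1/2-\delta_m}$ in the normal direction $G^{\lambda}(\bar{x},\omega_0)$ arising from the $\eta_B$ cutoff.

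By the uncertainty principle, $\eta_B \cdot T^{\lambda} g$ is essentially constant at scale $R^{1/2-O(\delta_m)}$ along the spatial subspace
\begin{equation*}
S := \bigl(d\Sigma(\omega_0)(T_{\omega_0}A) + \R\,G^{\lambda}(\bar{x},\omega_0)\bigr)^\perp + \R\,G^{\lambda}(\bar{x},\omega_0),
\end{equation*}
which has dimension $n-m+1$. The transverse equidistribution subspace is then $W := V^\perp \cap S \subseteq V^\perp$, and the goal is to show $\dim W \geq \mu(n,\sigma,m)$. Using the reduced form of $\phi$ (Lemma \ref{reduction lemma}), the Hessian $\partial_{\omega\omega}^2\partial_{x_n}\phi^{\lambda}$ at $(\bar{x},\omega_0)$ is close to $\mathrm{I}_{n-1,\sigma}$, and after passing to orthonormal coordinates adapted to $G^\lambda(\bar x,\omega_0)$, the estimation of $\dim W$ reduces to lower bounding $\dim(U \cap I U)$, where $U \subseteq \R^{n-1}$ is a subspace of dimension $n-m$ and $I$ is a symmetric invertible matrix of signature $\sigma$.

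The key linear algebraic input is: for any such $U$ and $I$,
\begin{equation*}
\dim(U \cap IU) \geq \max\bigl\{\,2(n-m)-(n-1),\;\;(n-m) - (n-1-\sigma)/2,\;\;0\,\bigr\} = \mu(n,\sigma,m).
\end{equation*}
The first term is trivial from $\dim(U + IU) \leq n-1$. The second, crucial, signature bound follows by observing that if $P$ denotes the $(n-1+\sigma)/2$-dimensional positive eigenspace of $I$ and $N$ its negative eigenspace, then $U \cap P \subseteq U \cap IU$ (as $I$ acts as the identity on $P$), while the projection of $U$ along $P$ into $N$ gives $\dim(U \cap P) \geq \dim U - \dim N = (n-m) - (n-1-\sigma)/2$. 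The $O(c_{\mathrm{ex}})$ perturbation of $\mathrm{I}_{n-1,\sigma}$ present in the general reduced case preserves the signature and hence the bound.

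With the subspace $W$ of dimension $\geq \mu := \mu(n,\sigma,m)$ in hand, a Fubini argument---slicing $N_{\rho^{1/2+\delta_m}}Z \cap B$ transverse to $W$, using that $Z$ is well approximated by $V$ on the scale of $B$ (since $Z$ is a bounded-degree transverse complete intersection) and that $|T^{\lambda}g|^2$ is essentially constant at scale $R^{1/2-O(\delta_m)}$ in the $W$-directions---yields the shrinkage factor $(\rho/R)^{\mu/2}$ relating the integrals over the $\rho^{1/2+\delta_m}$- and $R^{1/2+\delta_m}$-neighbourhoods. Combining this with the standard wave-packet orthogonality bound $\int_B |T^{\lambda} g|^2 \lesssim R^{1/2+O(\delta_m)} \|g\|_{L^2(B^{n-1})}^2$ produces the claimed inequality. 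The main obstacle is the linear algebraic bound above, which is precisely where the signature hypothesis enters essentially; secondary technical points include controlling the reduced-phase perturbations of $\mathrm{I}_{n-1,\sigma}$ and verifying that the Fubini slicing remains robust when $W$ is not axis-aligned, both of which should follow by routine adaptation of the analogous arguments in \cite{Guth2018, GHI2019}.
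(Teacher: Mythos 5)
Your proposal is correct and follows essentially the same route as the paper's proof: localise the Fourier support of $\eta_B\cdot T^{\lambda}g$ along the slice $A_\xi$, apply the uncertainty principle to get equidistribution along a subspace $W$ whose dimension is controlled by $\dim\bigl(V_{\mathrm{sl}}^{\perp}\cap \mathrm{I}_{n-1,\sigma}(V_{\mathrm{sl}}^{\perp})\bigr)$, and then run the Fubini/Wongkew covering argument together with H\"ormander's $L^2$ bound. The only notable (and harmless) differences are that you prove the key dimension bound on $U\cap IU$ via the containment $U\cap P\subseteq U\cap IU$ for the positive eigenspace $P$ rather than the paper's matrix-rank computation, and that you fix $V=T_{z_0}Z$ of dimension exactly $m$ whereas the paper takes $V$ of minimal dimension $\le m$ (which is what it uses to justify the transversality needed in the covering step, at the cost of an extra numerical verification $\dim V+\mu(n,\sigma,\dim V)-m\ge\mu(n,\sigma,m)$).
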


The remainder of the section is dedicated to the proof of this lemma. For a discussion of the philosophy and heuristics behind estimates of this kind, see \cite[\S6]{Guth2018} or \cite[\S8]{GHI2019}, as well as \S\ref{new sec}. It is noted that in the maximum signature case $\mu(n,n-1,m) = n-m$ for all $1 \leq m \leq n$, so this lemma recovers the previous elliptic case result in \cite[Lemma 8.4]{GHI2019} (see also \cite[Lemma 6.2]{Guth2018}). On the other hand, in the range $\frac{n + \sigma +1}{2} \leq m \leq n$ where $\mu(n,\sigma,m) = 0$ the result follows from a classical $L^2$ bound of H\"ormander and does not depend on any geometric considerations regarding the wave packets.




\subsection{Wave packets tangential to linear subspaces} Here, as a step towards  Lemma~\ref{trans eq lem 1}, transverse equidistribution estimates are proven for functions concentrated on wave packets tangential to some fixed linear subspace $V \subseteq \R^n$. As before, let $B$ be a ball of radius $R^{1/2 + \delta_m}$ with centre $\bar{x} \in \R^n$ and define
\begin{equation*}
\T_{V,B} := \big\{(\theta, v) \in \T : \angle(G^{\lambda}(\bar{x}, \omega_{\theta}), V) \lesssim R^{-1/2 + \delta_m}  \textrm{ and }  T_{\theta, v} \cap B \neq \emptyset   \big\}.
\end{equation*}
Let $R^{1/2} < \rho < R$ and for $\tau \subset \R^{n-1}$ a ball of radius $O(\rho^{-1/2+ \delta_m})$ centred at a point in $B^{n-1}$ define
\begin{equation*}
\T_{V, B,\tau} := \big\{(\theta, v) \in \T_{V,B} : \theta \cap (\tfrac{1}{10}\cdot \tau) \neq \emptyset\big\}
\end{equation*}
where $(\tfrac{1}{10}\cdot \tau)$ is the cap concentric to $\tau$ but with $1/10$th of the radius.

The key estimate is the following.

\begin{lemma}\label{trans eq lem 2} If $V \subseteq \R^n$ is a linear subspace, then there exists a linear subspace $V'$ with the following properties:
\begin{enumerate}[1)]
\item $\mu(n,\sigma,\dim V) \leq \dim V' \leq n-\dim V$.
\item $V, V'$ are quantitatively transverse in the sense that there exists a uniform constant $c_{\mathrm{trans}} > 0$ such that
\begin{equation*}
 \angle(v, v') \geq 2c_{\mathrm{trans}} \qquad \textrm{for all non-zero vectors $v \in V$ and $v' \in V'$.}
\end{equation*}
\item If $g$ is concentrated on wave packets from $\T_{V, B,\tau} $, $\Pi$ is any plane parallel to $V'$ and $x_0 \in \Pi \cap B$, then the inequality
\begin{align*}
\int\displaylimits_{\Pi \cap B(x_0, \rho^{1/2 + \delta_m})}\!\!\!\!\!\!\!\!\!\!\!\! |T^{\lambda}g|^2 \lesssim_{\delta} R^{O(\delta_m)} (\rho/R)^{\dim V'/2} \|g\|_{L^2(B^{n-1})}^{2\delta/(1+\delta)} \big(\int\displaylimits_{\Pi \cap 2B} |T^{\lambda}g|^2 \big)^{1/(1+\delta)}
\end{align*}
holds up to the inclusion of a $\mathrm{RapDec}(R)\|g\|_{L^2(B^{n-1})}$ term on the right-hand side. 
\end{enumerate}
\end{lemma}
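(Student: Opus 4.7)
The strategy is to adapt the uncertainty-principle argument for the elliptic case in \cite[Lemma 8.4]{GHI2019} (see also \cite[Lemma 6.2]{Guth2018}), replacing the ``$V' = V^\perp$'' choice used there by a signature-sensitive complement whose dimension is controlled by $\mu(n,\sigma,m)$. Fix a base point $\omega_0 \in \tau$ with $G^\lambda(\bar x;\omega_0) \in V$. By the reduction to reduced phases (\S\ref{reductions section}) and a translation/rotation, one may assume $\bar x = 0$, $\omega_0 = 0$ and $G^\lambda(0;0) = e_n$, so $V = \mathbb{R}e_n \oplus W$ for some $W \subseteq \R^{n-1}$ with $\dim W = m-1$, where $m := \dim V$. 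The reduced form $\phi(x;\omega) = \inn{x'}{\omega} + x_n h(\omega) + \mathcal{E}(x;\omega)$ together with $\partial^2_{\omega\omega} h(0) = \mathrm{I}_{n-1,\sigma}$ gives, via a direct linearisation of the Gauss map, that the tangent space $T := T_{\Sigma(0)}\Sigma(A)$ equals $\mathrm{I}_{n-1,\sigma}(W) \times \{0\}$ up to controlled error, where $A := \{\omega \in \tau : G^\lambda(\bar x;\omega) \in V\}$ and $\Sigma$ is as in \eqref{Sigma def}.

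I will define $V'$ as a complement of $V \cap T^\perp$ inside $T^\perp$; this immediately forces $V \cap V' = \{0\}$, and the quantitative angle bound (2) follows by tracking constants via the uniform curvature hypothesis H2 on $\mathrm{supp}\,a$. An elementary computation shows $V \cap T^\perp = \mathbb{R}e_n \oplus \mathrm{rad}(W)$, where $\mathrm{rad}(W) := W \cap \mathrm{I}_{n-1,\sigma}(W)^\perp$ is the radical of the restriction of the form $\mathrm{I}_{n-1,\sigma}$ to $W$. Hence $\dim V' = (n-(m-1)) - (1 + \dim \mathrm{rad}(W)) = n - m - \dim \mathrm{rad}(W)$. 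Three independent bounds control $\dim \mathrm{rad}(W)$: the obvious $\dim \mathrm{rad}(W) \leq \dim W = m-1$; the nondegeneracy of the form on $\R^{n-1}$ gives $\dim \mathrm{rad}(W) \leq \dim W^\perp = n-m$; and any isotropic subspace of a signature-$\sigma$ form on $\R^{n-1}$ has dimension at most $(n-1-\sigma)/2$. Taking the minimum of the three and subtracting from $n-m$ yields $\dim V' \geq \mu(n,\sigma,m)$, proving (1); the upper bound $\dim V' \leq n-m$ is built into the construction.

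For conclusion (3), localise to $B$ by multiplying by a Schwartz bump $\eta_B$. The wave packet concentration from \S\ref{wave packet section} ensures that $\widehat{\eta_B \cdot T^\lambda g}$ is concentrated in an $R^{-1/2+\delta_m}$-neighbourhood of $\Sigma(A)$, which near $\Sigma(0)$ lies in an $R^{-1/2+O(\delta_m)}$-neighbourhood of the affine subspace $\Sigma(0) + T$. Since $V' \subseteq T^\perp$, the Fourier transform (in $V'$-variables) of the restriction of $\eta_B \cdot T^\lambda g$ to any affine plane $\Pi$ parallel to $V'$ is supported in an $R^{-1/2+O(\delta_m)}$-ball. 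The standard locally constant lemma then implies that $|T^\lambda g|$ is essentially constant at scale $R^{1/2-O(\delta_m)}$ in the $V'$-directions on $\Pi$. Averaging this comparison over the tiling of $\Pi \cap 2B$ by $\rho^{1/2+\delta_m}$-balls yields the inequality in (3), with the factor $(\rho/R)^{\dim V'/2}$ accounting for the $\dim V'$ directions along which equidistribution holds. The mild $\|g\|_{L^2(B^{n-1})}^{2\delta/(1+\delta)}$ correction, together with the $1/(1+\delta)$ power on the right, absorbs the $\mathrm{RapDec}(R)\|g\|_{L^2(B^{n-1})}$ wave-packet tails via a routine H\"older interpolation against the trivial $L^2 \to L^2$ bound for $T^\lambda$.

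The main obstacle is the dimension count in the second paragraph, which is precisely where the signature $\sigma$ enters the argument: the three regimes of $\mu(n,\sigma,m)$ correspond to the three bounds on $\dim \mathrm{rad}(W)$ becoming binding in turn, and in particular the isotropic bound $(n-1-\sigma)/2$ is what distinguishes the present setting from the elliptic case $\sigma = n-1$ (where that bound forces $\dim \mathrm{rad}(W) = 0$, recovering $V' = V^\perp$). The second delicate point is the uniformity of the angular constant $c_{\mathrm{trans}}$: this follows by tracking the dependence on $c_{\mathrm{ex}}$ and on the spectral gap of $\mathrm{I}_{n-1,\sigma}$, both of which are admissible. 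Once these two geometric inputs are in place, the uncertainty-principle step is essentially a direct transcription of the argument in \cite{Guth2018, GHI2019}.
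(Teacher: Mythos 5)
Your overall architecture matches the paper's (normalise so $G^{\lambda}(\bar{x};\omega_0)=e_n$ and $V=\R e_n\oplus W$, identify the tangent space of the slice as $T\approx \mathrm{I}_{n-1,\sigma}(W)\times\{0\}$, place $V'$ inside $T^{\perp}$, and run the uncertainty-principle argument for part 3)), and your observation that the exponent $(n-1-\sigma)/2$ comes from the maximal dimension of an isotropic subspace is a clean way to see the numerology. However, there is a genuine gap in the construction of $V'$: taking $V'$ to be \emph{a complement of $V\cap T^{\perp}$ inside $T^{\perp}$} does not yield the uniform transversality in part 2). Concretely, take $n=3$, $\sigma=0$, $\mathrm{I}_{2,0}=\mathrm{diag}(1,-1)$ and $W=\langle(1,1+\epsilon)\rangle$ with $\epsilon>0$ small, so $V=\langle(1,1+\epsilon,0),e_3\rangle$. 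Here $\mathrm{rad}(W)=\{0\}$, so your count insists on $\dim V'=n-m-\dim\mathrm{rad}(W)=1$; but $T^{\perp}=\langle(1+\epsilon,1,0),e_3\rangle$ and $V\cap T^{\perp}=\R e_3$, and \emph{every} complementary line $\langle(1+\epsilon,1,c)\rangle$ makes angle $O(\epsilon)$ with $V$ (its projection onto $V^{\perp}=\langle(1+\epsilon,-1,0)\rangle$ has length $O(\epsilon)$). So no uniform $c_{\mathrm{trans}}$ exists for your $V'$ as $W$ degenerates towards the isotropic line $\langle(1,1)\rangle$ --- which is exactly the hyperbolic failure of equidistribution described in \S\ref{new sec}, consistent with $\mu(3,0,2)=0$.

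The failure propagates to part 1): the space that \emph{does} satisfy 2) is (up to the $O(c_{\mathrm{ex}})$ rotation) $T^{\perp}\cap V^{\perp}=\big(W^{\perp}\cap \mathrm{I}_{n-1,\sigma}(W)^{\perp}\big)\times\{0\}$, which is contained in $V^{\perp}$ and hence quantitatively transverse to $V$ by the standard angle computation, but which is in general a \emph{proper} subspace of your complement. Its dimension is $n-2m+1+\dim\big(W\cap \mathrm{I}_{n-1,\sigma}(W)\big)$, not $n-m-\dim\mathrm{rad}(W)$; these quantities genuinely differ (e.g.\ $W=\langle e_1+e_3,\,e_2+e_3\rangle$ in $\R^3$ with $\mathrm{I}_{3,1}=\mathrm{diag}(1,1,-1)$ gives $\dim(W\cap\mathrm{I}W)=1$ while $\dim\mathrm{rad}(W)=0$). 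So the lower bound $\geq\mu(n,\sigma,m)$ must be proved for $\mathrm{codim}\,\big(W\cap\mathrm{I}_{n-1,\sigma}(W)\big)$ rather than for $\dim\mathrm{rad}(W)$; this is the content of the paper's rank computation with the block matrix of the $N_k'$ and $\tilde{N}_k=\mathrm{I}_{n-1,\sigma}(N_k')$, and your isotropic-subspace bound does not substitute for it. Your treatment of part 3) (Fourier support within $R^{-1/2+O(\delta_m)}$ of $\Sigma(0)+T$, local constancy along $V'\subseteq T^{\perp}$) is fine for any subspace of $T^{\perp}$ and would go through once $V'$ is corrected.
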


\begin{proof}[Proof (of Lemma~\ref{trans eq lem 2})] Many of the steps of the proof are similar to the proof of Lemma 8.7 from \cite{GHI2019}, although the construction of $V'$ itself is different from that used in the positive-definite case. 


 
\subsection*{Constructing the subspace $V_{\mathrm{aux}}'$} The first step in the argument is to construct an auxiliary space $V_{\mathrm{aux}}'$; the desired subspace $V'$ is then obtained by rotating $V_{\mathrm{aux}}'$.

One may assume without loss of generality that  
\begin{equation}\label{TE2 1}
\overline{\angle}(V, e_n^{\perp}) := \max_{v \in V\cap S^{n-1}} \angle(v, e_n^{\perp}) \gtrsim 1
\end{equation}
since otherwise the family of tubes $\mathbb{T}_{V,B}$ is empty and there is nothing to prove. Consider the horizontal slice $V_{\mathrm{sl}} := \mathrm{proj}_{\R^{n-1}} (V \cap \R^{n-1} \times \{0\}) \subseteq \R^{n-1}$. The angle condition \eqref{TE2 1} ensures that $\dim V_{\mathrm{sl}} = \dim V -1$. Let $\widetilde{V}_{\mathrm{sl}}$ denote the preimage of $V_{\mathrm{sl}}$ (which also corresponds to the image) under the linear mapping induced by the matrix $
\mathrm{I}_{n-1, \sigma}$; recall, $
\mathrm{I}_{n-1, \sigma}$ is the matrix appearing in Example \ref{prototypical example} and in the definition of reduced form from \S\ref{reductions section}. The auxiliary space is defined to be
\begin{equation*}
    V_{\mathrm{aux}}' := V_{\mathrm{sl}}^{\perp} \cap \widetilde{V}_{\mathrm{sl}}^{\perp},
\end{equation*}
where the orthogonal complements are taken inside $\mathbb{R}^{n-1}$.
The following example partially motivates the above definition. 

\begin{example}\label{Gauss preimage example} Consider the prototypical case of the extension operator $E_{\sigma}$ from Example~\ref{prototypical example}. Here the unnormalised Gauss map $G_0$ is an affine map, and so
\begin{equation*}
    A_{\omega} := \{ \omega \in \R^{n-1} :  G_0(\omega) \in V \}
\end{equation*}
is an affine subspace. A simple computation shows that $A_{\omega}$ is parallel to $\widetilde{V}_{\mathrm{sl}}$. 
\end{example}




\subsection*{Dimension bounds for $V_{\mathrm{aux}}'$} The next step of the proof is to show that the auxiliary space satisfies the dimension bounds described in part 1) of the lemma. It is clear that $\dim V_{\mathrm{aux}}' \leq n - \dim V$ since $V_{\mathrm{aux}}' \subseteq V_{\mathrm{sl}}^{\perp}$ and the latter subspace has dimension equal to 
\begin{equation*}
 n - 1 - \dim V_{\mathrm{sl}} = n-1 - (\dim V - 1) =  n - \dim V. 
\end{equation*} 
It remains to show that $\dim V_{\mathrm{aux}}' \geq \mu(n,\sigma,\dim V)$. Since $V_{\mathrm{sl}}^{\perp} \cap \widetilde{V}_{\mathrm{sl}}^{\perp} = (V_{\mathrm{sl}} + \widetilde{V}_{\mathrm{sl}})^{\perp}$ and $\dim V_{\mathrm{sl}} = \dim \widetilde{V}_{\mathrm{sl}} = \dim V - 1$, it follows that
\begin{align*}
    \dim V_{\mathrm{aux}}' &= n - 1 - \dim \big( V_{\mathrm{sl}} +\widetilde{V}_{\mathrm{sl}}\big) \\
    &= n - 1 - \dim V_{\mathrm{sl}} - \dim \widetilde{V}_{\mathrm{sl}} + \dim V_{\mathrm{sl}} \cap \widetilde{V}_{\mathrm{sl}} \\
    &=n-2\dim V+1+\dim V_{\mathrm{sl}}\cap\widetilde{V}_{\mathrm{sl}},
\end{align*}
from which the estimate $\dim V'_{\mathrm{aux}}\geq n-2\dim V+1$ directly follows. It thus suffices to prove that $\dim V'_{\mathrm{aux}}\geq (n+\sigma+1)/2-\dim V$, or equivalently
\begin{equation*}
\mathrm{codim}\, V_{\mathrm{sl}} \cap \widetilde{V}_{\mathrm{sl}}\leq n-\dim V+\frac{n-1-\sigma}{2}.
\end{equation*}

Fix an orthonormal basis $\{N_1, \dots, N_{n - \dim V}\}$ for $V^{\perp}$ so that
\begin{equation*}
    V = \big\{ \xi \in \hat{\R}^n : \langle \xi, N_k \rangle = 0 \textrm{ for $1 \leq k \leq n - \dim V$} \big\}.
\end{equation*}
The angle condition \eqref{TE2 1} implies that $\{N_1', \dots, N_{n-\dim V}'\}$ is a linearly independent set of vectors, where $N_k = (N_k', N_{k,n}) \in \R^{n-1} \times \R$ and, clearly,\footnote{To establish the desired dimensional bounds, the only required property of the vectors $N_k'$ is that they form a basis of $V_{\mathrm{sl}}^\perp$, not that they arise from a basis for $V^\perp$ in the above manner. However, the vectors $N_k$ are introduced as they will be used in subsequent parts of the proof.}
\begin{equation*}
     V_{\mathrm{sl}} = \big\{ u \in \R^{n-1} : \langle u , N_k' \rangle = 0 \textrm{ for $1 \leq k \leq n - \dim V$} \big\}.
\end{equation*}
On the other hand, 
\begin{equation*}
    \widetilde{V}_{\mathrm{sl}} = \big\{ u \in \R^{n-1} : \langle u, \tilde{N}_k \rangle = 0 \textrm{ for $1 \leq k \leq n - \dim V$} \big\}
\end{equation*}
where the vectors $\tilde{N}_k := I_{n-1, \sigma}(N_k') \in \R^{n-1}$ satisfy
\begin{equation*}
    \tilde{N}_k = 
    \begin{pmatrix}
    N_{k, +}\\
    -N_{k, -}
    \end{pmatrix} \quad \textrm{for} \quad N_k' = \begin{pmatrix}N_{k,+} \\
    N_{k,-}
    \end{pmatrix}
    \in \R^{\sigma} \times \R^{n-1-\sigma}.
\end{equation*}

Combining the observations of the previous paragraph,
\begin{equation*}
     V_{\mathrm{sl}} \cap \widetilde{V}_{\mathrm{sl}} = \big\{ u \in \R^{n-1} : \langle u , N_k' \rangle = \langle u , \tilde{N}_k \rangle = 0 \textrm{ for $1 \leq k \leq n - \dim V$} \big\}
\end{equation*}
and, consequently, 
\begin{equation*}
 \mathrm{codim}\, V_{\mathrm{sl}} \cap \widetilde{V}_{\mathrm{sl}} =  \mathrm{rank}\, 
 \begin{pmatrix}
    N_1' & \dots & N_{n - \dim V}' & \tilde{N}_1 & \cdots & \tilde{N}_{n-\dim V}
    \end{pmatrix}.
\end{equation*}
Note that
\begin{equation*}
    \frac{1}{2}\cdot \big(N_k' - \tilde{N}_k \big) =
    \begin{pmatrix}
    0 \\
    N_{k,-}
    \end{pmatrix}
\end{equation*}
and, since matrix rank is preserved under elementary column operations, 
\begin{equation*}
   \mathrm{codim}\, V_{\mathrm{sl}} \cap \widetilde{V}_{\mathrm{sl}} =  \mathrm{rank}\, 
 \begin{pmatrix}
N_{1,+} & \dots & N_{n - \dim V,+} & 0 & \cdots & 0 \\
N_{1,-} & \dots & N_{n - \dim V,-} & N_{1,-} & \cdots & N_{n-\dim V,-}
    \end{pmatrix}.
\end{equation*}
The left $(n-1)\times (n-\dim V)$ block is made up of $n-\dim V$ linearly independent columns $N_1', \dots, N_{n-\dim V}'$. For the right-hand block, the number of linearly independent columns can be at most the number of non-zero rows, which is equal to $(n-1-\sigma)/2$. Altogether, this bounds the matrix rank above by
\begin{equation*}
    n - \dim V + \frac{n-1-\sigma}{2},
\end{equation*}
as desired.


 
\subsection*{Constructing the subspace $V'$} 

One may assume without loss of generality that $S_{\omega} \cap \tau \neq \emptyset$ where
\begin{equation*}
S_{\omega} := \big\{ \omega \in \Omega : G^{\lambda}(\bar{x}; \omega) \in V \big\},
\end{equation*}
since otherwise the family of tubes $\mathbb{T}_{V,B, \tau}$ is empty and there is nothing to prove. Recalling \eqref{TE2 1}, it follows that $S_{\omega}$ is a smooth surface in $\R^{n-1}$ of dimension $\dim V - 1$; indeed, this can be verified as a simple calculus exercise, but it is also treated explicitly as Claim 1 in the proof of Lemma 8.7 from \cite{GHI2019} (the claim is stated in the positive-definite case, but the argument does not depend on the signature). For notational convenience, write
\begin{equation}\label{convenient notation}
\Psi(u)  := \Psi_{\bar{x}}^{\lambda}(u) \qquad \textrm{and} \qquad   \bar{h}(u) := h_{\bar{x}}^{\lambda}(u) = \partial_{x_n} \phi^{\lambda}(\bar{x}; \Psi(u))
\end{equation} 
for the functions as defined in \S\ref{basic geometry section}.  Consider the surface
\begin{equation*}
 S_u := \Psi^{-1}(S_{\omega}) = \{u \in U : G_0^{\lambda}(\bar{x}; \Psi (u)) \in V\},
\end{equation*}
given by the diffeomorphic image of $S_{\omega}$ under the map $\Psi$. Fix some $u_0 \in S_u \cap \Psi^{-1}(\tau)$ and let $A_u$ denote the tangent plane to $S_u$ at $u_0$. Here, the tangent plane is interpreted as a $(\dim V - 1)$-dimensional affine subspace of $\R^{n-1}$ through $u_0$. Now define $A_{\xi} := A_u \times \R \subseteq \R^n$, so that $\dim A_{\xi} = \dim V$, and let $V_u$ and $V_{\xi}$ be the linear subspaces parallel to $A_u$ and $A_{\xi}$, respectively. 

The spaces $\widetilde{V}_{\mathrm{sl}} \subset \R^{n-1}$ and $V_u \subset \R^{n-1}$ both have dimension $\dim V - 1$. Moreover, the localisation to the cap $\tau$ and ball $B$ implies that $\widetilde{V}_{\mathrm{sl}}$ and $V_u$ are close to one another in the following sense.

\begin{claim} Let $c_{\mathrm{ex}}$ be the constant defined in \S\ref{reductions section}. Then 
\begin{equation*}
    \max_{v^* \in \widetilde{V}_{\mathrm{sl}} \cap S^{n-2}} \angle(v^*, V_u) = O(c_{\mathrm{ex}}).
\end{equation*}
\end{claim}

The proof of the claim is temporarily postponed. Assuming its validity, it follows that there exists a choice of $O_V \in \mathrm{SO}(n-1,\R)$ mapping $\widetilde{V}_{\mathrm{sl}}$ to $V_u$ which satisfies
\begin{equation*}
    \|O_V - I_{n-1}\|_{\mathrm{op}} = O(c_{\mathrm{ex}}). 
\end{equation*}
Indeed, if $\{v_1^*, \dots, v_{\dim V -1}^*\}$ is a choice of orthonormal basis for $\widetilde{V}_{\mathrm{sl}}$, then the claim implies that there exists a basis $\{v_1, \dots, v_{\dim V - 1}\}$ for $V_u$ satisfying 
\begin{equation*}
    \angle(v_k^*, v_k) = O(c_{\mathrm{ex}}) \qquad \textrm{for $1 \leq k \leq \dim V - 1$.}
\end{equation*} 
Applying the Gram--Schmidt process, one may further assume $\{v_1, \dots, v_{\dim V - 1}\}$ is orthonormal, at the expense of a larger implied constant. A rotation $O_V$ with the desired properties is given by stipulating that it maps $v_k^*$ to $v_k$ for $1 \leq k \leq n$. 

Fixing a rotation $O_V$ which satisfies the above property, 
\begin{equation*}
    V' := \big(O_V(V_{\mathrm{sl}}^{\perp}) \cap V_u^{\perp}\big) \times \{0\} = O_V(V_{\mathrm{sl}}^{\perp}) \times \{0\} \cap V_{\xi}^{\perp}.
\end{equation*}
Since $V_u^{\perp} = O_V(\widetilde{V}_{\mathrm{sl}}^{\perp})$, clearly $V' = O_V(V'_{\mathrm{aux}}) \times \{0\}$. In particular, the space $V'$ inherits the dimension bounds from $V'_{\mathrm{aux}}$ and therefore the dimension condition 1) from the lemma is immediately verified. 

It remains to prove the claim. The argument is almost identical to that used to prove Claim 4 in the proof of Lemma 8.7 of \cite{GHI2019}. Nevertheless, here the signature of the phase plays a r\^ole and therefore the details are sketched. 

\begin{proof}[Proof (of Claim)] 

Fixing $v^* \in \widetilde{V}_{\mathrm{sl}} \cap S^{n-2}$, elementary linear geometry considerations reduce the problem to showing 
\begin{equation*}
    |\mathrm{proj}_{V_u^{\perp}} v^*| = O(c_{\mathrm{ex}}).
\end{equation*}
For $\bar{h}$ as in \eqref{convenient notation}, recall that $u \mapsto (u, \bar{h}(u))$ is a graph parametrisation of the surface $\Sigma_{\bar{x}}^{\lambda}$ from \S\ref{basic geometry section} and $u \mapsto G_0^{\lambda}(\bar{x};\Psi(u))$ is the unnormalised Gauss map associated to this parametrisation. It follows that
\begin{equation*}
    S_u = \big\{ u \in U : -\langle\partial_u \bar{h}(u), N_k' \rangle + N_{k,n} = 0 \textrm{ for $1 \leq k \leq n-\dim V$} \big\}.
\end{equation*}
Differentiating the defining equations in the above expression and recalling that $u_0$ is a fixed point featured in the definition of $A_u$, one deduces that a basis for $V_u^{\perp}$ is given by $\{M_1, \dots, M_{n - \dim V}\}$ where
\begin{equation*}
M_k := \partial_{uu}^2 \bar{h}(u_0)N_k' \qquad \textrm{for $1 \leq k \leq n - \dim V$.}
\end{equation*}
Lemma~\ref{reduction lemma} together with some calculus (see \cite[Lemma 4.5]{GHI2019} for a similar computation) imply that 
\begin{equation*}
\|\partial_{uu}^2 \bar{h}(u_0) - \mathrm{I}_{n-1, \sigma}\|_{\mathrm{op}} = O(c_{\mathrm{ex}}).
\end{equation*}
 Since  $\langle v^*, \tilde{N}_k \rangle = 0$ for $1 \leq k \leq n - \dim V$ and $\tilde{N}_k = I_{n-1, \sigma} (N_k')$, it follows that 
\begin{equation}\label{V space claim 4}
|\langle v^*, M_k \rangle| = |\langle v^*, M_k - \tilde{N}_k \rangle| \leq |M_k - \tilde{N}_k| = O(c_{\mathrm{ex}}).
\end{equation}

Let $\mathbf{M}$ be the $(n-1) \times (n - \dim V)$ matrix whose $k$th column is given by the vector $M_k$. The orthogonal projection of $v^*$ onto the subspace $V_u^{\perp}$ can be expressed in terms of $\mathbf{M}$ via the formula  
\begin{equation*}
\mathrm{proj}_{V_u^{\perp}} v^* := \mathbf{M}(\mathbf{M}^{\top}\mathbf{M})^{-1}\mathbf{M}^{\top} v^*.
\end{equation*}
By \eqref{V space claim 4}, the components of the vector $\mathbf{M}^{\top} v^*$ are all $O(c_{\mathrm{ex}})$. Furthermore, it is not difficult to show that $\|\mathbf{M}(\mathbf{M}^{\top}\mathbf{M})^{-1}\|_{\mathrm{op}} \lesssim 1$, and combining these observations establishes the claim.  
\end{proof}




\subsection*{Verifying the transversality condition in  2)} Provided $c_{\mathrm{ex}}$ is chosen to be sufficiently small, the transversality condition holds for the subspace $V'$. To see this, first consider the auxiliary space $V'_{\mathrm{aux}}$. By elementary geometric considerations,
\begin{equation*}
    \min_{\substack{v \in V \cap S^{n-1} \\ v' \in V_{\mathrm{sl}}^{\perp} \times \{0\} \cap S^{n-1}}}\angle(v, v') = \overline{\angle}(V, e_n^{\perp}) \gtrsim 1,
\end{equation*}
where the latter inequality is by \eqref{TE2 1}; this computation is discussed in detail in  \cite[Sublemma 6.6]{Guth2018} and is represented diagrammatically in Figure~\ref{transverse figure}. The above inequality implies that $V$ and $V_{\mathrm{aux}}'$ are quantitatively transverse, since $V_{\mathrm{aux}}'$ is a subspace of $V_{\mathrm{sl}}^{\perp}$. 

It remains to pass from the auxiliary space $V_{\mathrm{aux}}'$ to $V'$.

\begin{figure} 
\centering 
\tdplotsetmaincoords{70}{110}
\begin{tikzpicture}[tdplot_main_coords,font=\sffamily, scale=0.9]
\draw[-latex] (0,0,0) -- (4,0,0);
\draw (-4,0,0) -- (-3,0,0);
\draw[-latex] (0,-4,0) -- (0,4,0);
\draw[-latex] (0,0,0) -- (0,0,4) node[anchor=north east]{$\xi_n$};

\node[align=center] at (0,0.4,0.4) {\Large{$\bm{\theta}$}};

\node[anchor=south west,align=center] at (-0.8,1.2,0) {\Large$U$};

\begin{scope}[canvas is yz plane at x=0]
\draw[very thick,black] (1,0) arc (0:75:1cm) node[midway] (line) {};

   \end{scope}

\draw[very thick, blue](0,-3,0)--(0,3,0);
\node[anchor=south west,align=center] (line) at (3,3,-1) {\Large{\color{blue}$V_{\mathrm{sl}}^{\perp}$}};
\draw[-latex, blue] (line) to[out=180,in=-105] (0.4,1.6,0.05);

\draw[fill=yellow,opacity=0.2] (-3,-3,0) -- (-3,3,0) -- (3,3,0) -- (3,-3,0) -- cycle;
\draw[black, thin] (-3,-3,0) -- (-3,3,0) -- (3,3,0) -- (3,-3,0) -- cycle;
\draw[blue, thick](0,0,0)--(3,0,0);
\draw[blue, thick](-3,0,0)--(-1.9,0,0);
\draw[->,black, very thick] (0,0,0) -- (0,2,0);

\node[anchor=south west,align=center] (line) at (3,3.3,4.5) {\Large${\color{blue}V}$};

\tdplotsetrotatedcoords{-90}{75}{0}
\begin{scope}[tdplot_rotated_coords]
\draw[fill=blue,opacity=0.2] (-3,-3,0) -- (-3,3,0) -- (3,3,0) -- (3,-3,0) -- cycle;
\draw[black, thin] (-3,-3,0) -- (-3,3,0) -- (3,3,0) -- (3,-3,0) -- cycle;
\draw[->,black, very thick] (0,0,0) -- (-2,0,0);
\end{scope}
\end{tikzpicture}
\captionsetup{singlelinecheck=off}
\caption[.]{The transversality condition 
\begin{equation*}
    \theta := \min_{\substack{v \in V \cap S^{n-1} \\ v' \in V_{\mathrm{sl}}^{\perp} \cap S^{n-1}}}\angle(v, v') = \overline{\angle}(V, e_n^{\perp}) \gtrsim 1; \qquad\qquad\qquad\qquad
\end{equation*}
see \cite[Sublemma 6.6]{Guth2018} for a formal proof of this fact.}\label{transverse figure}
\end{figure}




\subsection*{Verifying the transverse equidistribution estimate in 3)} The remaining steps of the proof closely follow the argument used to prove Lemma 8.7 of \cite{GHI2019}. The localisation to $\tau$ implies that the tangent space $A_u$ is a good approximation for the surface $S_u$. In particular, the key observation is that if $(\theta, v) \in \mathbb{T}_{V, B, \tau}$, then 
\begin{equation}\label{Fourier trans eq}
    \mathrm{dist}(\xi_{\theta}, A_{\xi}) \lesssim R^{-1/2 + \delta_m} \qquad \textrm{for $\xi_{\theta} := \Sigma(\omega_{\theta})$.}
\end{equation}
As in \S\ref{wave packet section}, here  $\omega_{\theta} \in B^{n-1}$ denotes the centre of the cap $\theta$ whilst $\Sigma$ is the parametrisation of the smooth hypersurface from \eqref{Sigma def}. 

The inequality \eqref{Fourier trans eq} follows from the proof of Claim 3 in the proof of Lemma 8.7 of \cite{GHI2019}.  Since $V_{\xi}$ is the linear subspace parallel to the affine subspace $A_{\xi}$, the above inequality implies that $\mathrm{proj}_{V_{\xi}^{\perp}}\xi_{\theta}$ lies in some fixed ball of radius $O(R^{-1/2 + \delta_m})$ whenever $(\theta, v) \in \mathbb{T}_{V, B, \tau}$. 

As in \cite{GHI2019} and \cite{Guth2018}, the desired transverse equidistribution estimate 3) follows as a consequence of the  localisation of the $\mathrm{proj}_{V_{\xi}^{\perp}}\xi_{\theta}$ described above. Indeed, since each $\eta_B \cdot T^{\lambda}g_{\theta,v}$ is essentially Fourier supported in a small ball around $\xi_{\theta}$, this implies the projection of the Fourier support of $\eta_B \cdot T^{\lambda}g_{\theta,v}$ onto $V_{\xi}^{\perp}$ is also localised to a $O(R^{-1/2 + \delta_m})$-ball. The transverse equidistribution property now follows as a manifestation of the uncertainty principle (see, in particular, \cite[Lemma 8.5]{GHI2019}). The reader is referred to \cite{GHI2019} for the full details.


\end{proof}




\subsection{The proof of the transverse equidistribution estimate} Using ideas from \cite{Guth2018, GHI2019}, one may easily pass from Lemma~\ref{trans eq lem 2} to Lemma~\ref{trans eq lem 1}. Much of the proof is essentially identical to the proof of \cite[Lemma 8.4]{GHI2019} therefore only a sketch of the argument is provided.

It suffices to prove Lemma~\ref{trans eq lem 1} in the case $1 \leq m = \dim Z \leq (n+\sigma+1)/2$, as otherwise $\mu(n,\sigma,m)=0$ and the statement is a simple consequence of H\"ormander's classical $L^2$ bound (see the discussion around \eqref{trans eq 4} below). 

 Consider $Z, B, \tau$ and $g$ as in the statement of Lemma~\ref{trans eq lem 1}. It may be assumed that $g$ is concentrated on those wave packets $(\theta,v)$ from $\mathbb{T}_{Z,B,\tau}$ for which $T_{\theta,v}$ intersects $N_{R^{1/2+\delta_m}}(Z)\cap B$, as for all other $(\theta,v)$ the function $|T^\lambda_{\theta,v}g|$ is very small on $N_{\rho^{1/2+\delta_m}}(Z)\cap B$. By the $R^{1/2+\delta_m}$-tangent condition, it follows that there exists $z\in Z\cap 2B$ such that
 \begin{equation*}
    \angle(G^\lambda(\bar{x},\theta),T_zZ) \lesssim R^{-1/2 + \delta_m}
\end{equation*}
for all such $(\theta,v)$. Therefore, there exists a subspace $V\subset \mathbb{R}^n$ of minimal dimension $\dim V\leq \dim Z$ such that
\begin{equation*}
    \angle(G^\lambda(\bar{x},\theta),V)\lesssim R^{-1/2 + \delta_m}
\end{equation*}
for all wave packets $(\theta,v)$ upon which $g$ is concentrated. This implies that $g$ is concentrated on wave packets $\mathbb{T}_{V,B,\tau}$, as defined in \S\ref{tang wp sec}. By Lemma~\ref{trans eq lem 2} there exists a linear subspace $V'\subseteq \mathbb{R}^n$ satisfying
\begin{equation}\label{dimension bounds}
    \mu(n,\sigma,\dim V)\leq\dim V'\leq n-\dim V,
\end{equation}
\begin{equation*}
    \angle(v,v')\geq 2c_{\rm trans} \qquad \text{for all non-zero vectors }v\in V \text{ and } v'\in V'
\end{equation*}
and the transverse equidistribution estimate
\begin{equation} \label{trans eq 1}
    \int\displaylimits_{\Pi' \cap B(x_0, \rho^{1/2 + \delta_m})}\!\!\!\!\!\!\!\!\!\!\!\! |T^{\lambda}g|^2 \lesssim_{\delta} R^{O(\delta_m)} (\rho/R)^{\dim V'/2} \|g\|_{L^2(B^{n-1})}^{2\delta/(1+\delta)} \big(\int\displaylimits_{\Pi' \cap 2B} |T^{\lambda}g|^2 \big)^{1/(1+\delta)}
\end{equation}
for every affine subspace $\Pi'$ parallel to $V'$ and $x_0\in B$. 

In contrast to the positive-definite case in \cite{GHI2019}, where one may ensure that $\dim V+\dim V'=n$, only the generally weaker dimension bounds \eqref{dimension bounds} hold here. However, the subspace $\widetilde{V}:=V'\oplus (V+V')^\perp$  satisfies $\dim V+\dim \widetilde{V}=n$ and the quantitative transversality condition
\begin{equation*}
    \angle(v,\widetilde{v})\geq 2c_{\rm trans} \qquad \text{for all non-zero vectors }v\in V \text{ and } \widetilde{v}\in\widetilde{V},
\end{equation*}
as well the transverse equidistribution estimate
\begin{eqnarray} \label{trans eq 2}
\begin{aligned}
    \int\displaylimits_{\Pi \cap B(x_0, \rho^{1/2 + \delta_m})}\!\!\!\!\!\!\!\!\!\!\!\! |T^{\lambda}g|^2 \lesssim_{\delta} R^{O(\delta_m)} (\rho/R)^{\dim V'/2} \|g\|_{L^2(B^{n-1})}^{2\delta/(1+\delta)} \big(\int\displaylimits_{\Pi \cap 2B} |T^{\lambda}g|^2 \big)^{1/(1+\delta)}
\end{aligned}
\end{eqnarray}
for every affine subspace $\Pi$ parallel to $\widetilde{V}$ and $x_0\in \Pi\cap B$, which follows from \eqref{trans eq 1} by Fubini and H\"older's inequality (as well as the fact that $\delta\ll\delta_m)$. Following closely the proof of Lemma 8.4 in \cite{GHI2019}, one may further prove that for each $z\in Z\cap 2B$ the pair $T_zZ, \widetilde{V}$ satisfies the quantitative transversality condition
\begin{equation*}
    \angle(v,\widetilde{v})\geq c_{\rm trans}
\end{equation*}
for all non-zero vectors $v\in T_zZ\cap (T_zZ\cap \widetilde{V})^\perp$ and $\widetilde{v}\in\widetilde{V}\cap (T_zZ\cap \widetilde{V})^\perp$. Since in addition $\dim T_zZ+\dim \widetilde{V}\geq n$, Lemma 8.13 in \cite{GHI2019} implies that
\begin{equation*}
    \Pi\cap N_{\rho^{1/2+\delta_m}}(Z)\cap B\subseteq N_{C\rho^{1/2+\delta_m}}(\Pi\cap Z)\cap 2B
\end{equation*}
for every plane $\Pi$ parallel to $\widetilde{V}$. As $\Pi\cap Z$ is a complete transverse intersection of dimension $\dim Z+\dim\widetilde{V}-n=m -\dim V$, it follows by Wongkew's theorem \cite{Wongkew1993} that $\Pi\cap N_{\rho^{1/2+\delta_m}}(Z)\cap B$ can be covered by
\begin{equation*}
    O_{\varepsilon}\left(R^{O(\delta_m)}(R/\rho)^{(m-\dim V)/2} \right)
\end{equation*}
balls of radius $\rho^{1/2+\delta_m}$. Applying the estimate \eqref{trans eq 2} in each of these balls and summing, one obtains
\begin{equation*}
    \int\displaylimits_{\Pi \cap N_{\rho^{1/2 + \delta_m}}(Z)\cap B}\!\!\!\!\!\!\!\!\!\!\!\! |T^{\lambda}g|^2 \lesssim_{\varepsilon,\delta} R^{O(\delta_m)} (\rho/R)^{(\dim V+\dim V'-m)/2} \|g\|_{L^2(B^{n-1})}^{2\delta/(1+\delta)} \big(\int\displaylimits_{\Pi \cap 2B} |T^{\lambda}g|^2 \big)^{1/(1+\delta)}
\end{equation*}
for all planes $\Pi$ parallel to $\widetilde{V}$. Integrating over all such planes and applying H\"older's inequality, one deduces that
\begin{equation}\label{trans eq 3}
    \int\displaylimits_{N_{\rho^{1/2 + \delta_m}}(Z)\cap B}\!\!\!\!\!\!\!\!\!\!\!\! |T^{\lambda}g|^2 \lesssim_{\varepsilon,\delta} R^{O(\delta_m)} (\rho/R)^{(\dim V+\dim V'-m)/2} \|g\|_{L^2(B^{n-1})}^{2\delta/(1+\delta)} \big(\int\displaylimits_{2B} |T^{\lambda}g|^2 \big)^{1/(1+\delta)}.
\end{equation}
By H\"ormander's $L^2$ bound \cite{Hormander1973} (see also \cite[Chapter IX]{Stein1993} or \cite[Lemma 5.5]{GHI2019}),
\begin{equation}\label{trans eq 4}
    \big(\int\displaylimits_{2B} |T^{\lambda}g|^2 \big)^{1/(1+\delta)}\lesssim R^{1/2+O(\delta_m)}\big( \int\displaylimits_{B^{n-1}}|g|^2\big)^{1/(1+\delta)}.
\end{equation}
Substituting this into \eqref{trans eq 3}, the desired estimate in Lemma \ref{trans eq lem 1} follows provided 
\begin{equation}\label{trans eq 5}
    \dim V+\dim V'-m\geq \mu(n,\sigma,m).
\end{equation}

It remains to show \eqref{trans eq 5} holds. In view of \eqref{dimension bounds}, this would follow from
\begin{equation*}
    \dim V+\mu(n,\sigma,\dim V)-m\geq \mu(n,\sigma,m).
\end{equation*}
By the initial reduction at the beginning of the subsection, $\dim V\leq m \leq (n+\sigma+1)/2$. If $0 \leq \dim V\leq (n-\sigma+1)/2$, then $\mu(n,\sigma,\dim V)=n-2\dim V+1$ and 
\begin{align*}
\dim V+\mu(n,\sigma,\dim V)-m &= n-m-\dim V+1\\
&\geq \max\left\{n-2m+1,n-m-\frac{n-\sigma+1}{2}+1\right\}\\
&=\mu(n,\sigma,m).
\end{align*}
On the other hand, if $(n-\sigma+1)/2 \leq \dim V\leq (n+\sigma+1)/2$, then $\mu(n,\sigma,\dim V)=(n+\sigma+1)/2-\dim V$ and
\begin{equation*}
\dim V+\mu(n,\sigma,\dim V)-m = \frac{n+\sigma+1}{2}-m=\mu(n,\sigma,m).
\end{equation*}
This concludes the proof of Lemma \ref{trans eq lem 1}.



\section{Proof of Theorem~\ref{k-broad theorem}}\label{broad theorem proof sec}

Theorem~\ref{k-broad theorem} is a special case of the following inductive proposition (in place of Proposition 10.1 from \cite{GHI2019}). Define
\begin{equation*}
    e_{k,n,\sigma}(p):=\frac{1}{2}\left(\frac{1}{2}-\frac{1}{p}\right)\frac{n+1+\sigma+2k}{2}.
\end{equation*}

\begin{proposition} Given $\varepsilon>0$ sufficiently small and $1\leq m\leq n$ there exist 
\begin{equation*}
0<\delta\ll\delta_{n-1}\ll \delta_{n-2} \ll \ldots \ll \delta_1\ll \varepsilon
\end{equation*}
and constants $\bar{C}_\varepsilon$, ${\bar A}_\varepsilon$ dyadic, $D_{m,\varepsilon} \lesssim_{\varepsilon} 1$ and $\vartheta_m < \varepsilon$ such that the following holds.

Suppose $Z=Z(P_1,\ldots,P_{n-m})$ is a transverse complete intersection with $\overline{\deg}\, Z\leq D_{m,\varepsilon}$. For all $0\leq \sigma\leq n-1$, $2\leq k\leq n$, $1\leq A\leq {\bar A}_\varepsilon$ dyadic and $1\leq K\leq R \leq \lambda$ the inequality 
\begin{equation*}
\|T^{\lambda} f\|_{\mathrm{BL}^p_{k,A}(B(0,R))}\lesssim_{\varepsilon} K^{\bar{C}_{\varepsilon}} R^{\vartheta_m + \delta\left(\log {\bar{A}}_\varepsilon - \log A\right)-e_{k,n,\sigma}(p)+1/2}\|f\|_{L^2(B^{n-1})}
\end{equation*}
holds for all translates $T^\lambda$ of H\"ormander-type operators with reduced phase of signature $\sigma$, whenever $f$ is concentrated on wave packets from $\T_Z$ and
\begin{equation*}
2\leq p \leq \bar{p}_0(m,\sigma,k) := \left\{
\begin{array}{ll}
\bar{p}(m,\sigma,k)          & \textrm{if $k < m$} \\
\bar{p}(m,\sigma,m)+\delta   & \textrm{if $k = m$}
\end{array} \right. .
\end{equation*}
\end{proposition}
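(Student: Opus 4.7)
I would follow the strategy of \cite[Proposition~10.1]{GHI2019}, substituting the partial transverse equidistribution estimate of Lemma~\ref{trans eq lem 1} for the (stronger) equidistribution estimate used there. The argument proceeds by a double induction: an outer induction on the dimension $m = \dim Z$, starting from the base case $k = m$ and ascending to $m = n$ (which yields Theorem~\ref{k-broad theorem} upon taking $Z = \mathbb{R}^n$ as a degenerate transverse complete intersection), and an inner induction on the spatial scale $R$ used to run polynomial partitioning at each level.

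\emph{Base case ($k = m$).} When $Z$ has the critical dimension $k$, the tangency hypothesis aligns the wave packets of $f$ with $Z$. After wave packet decomposition and the passage from $k$-linear to $k$-broad inequalities in the spirit of \cite[\S6.2]{GHI2019}, a direct application of the Bennett--Carbery--Tao multilinear Kakeya bound gives the estimate at $p = \bar p(m,\sigma,m) = 2m/(m-1)$; interpolating with the trivial $L^{\infty}$ bound provides the $\delta$-perturbed range $\bar p_0(m,\sigma,k) = \bar p(m,\sigma,m) + \delta$. This step does not involve the signature.

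\emph{Inductive step ($k < m$).} Assume the proposition holds at all smaller dimensions and, at dimension $m$, at all smaller radii. Apply Guth's polynomial partitioning lemma on $Z$ at degree $D = D_{m,\varepsilon}$ to obtain a polynomial $P$ whose zero set partitions $Z \cap B(0,R)$ into $\sim D^m$ cells $O_c$ of roughly equal mass. As in \cite[\S10]{GHI2019}, one obtains a decomposition
\begin{equation*}
\|T^\lambda f\|_{\mathrm{BL}^p_{k,A}(B(0,R))}^p \lesssim D^m \max_c \|T^\lambda f_c\|_{\mathrm{BL}^p_{k,A/2}(O_c)}^p + \|T^\lambda f\|_{\mathrm{BL}^p_{k,A}(W)}^p,
\end{equation*}
where $f_c$ records the wave packets entering the cell $O_c$ and $W = N_{R^{1/2+\delta_m}}(\{P=0\})$. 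The cellular term is treated by the inner induction (at broadness $A/2$ and a smaller effective radius), closing because the cellular gain $D^{m(1/p - 1/2)}$ strictly beats the $D^m$ multiplicity; meanwhile, on the wall $W$ one further splits the wave packets into those tangential to the lower-dimensional transverse complete intersection $Z \cap \{P = 0\}$ of dimension $m-1$ and a transverse remainder whose contribution is handled by a Kakeya-type incidence argument.

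\emph{Role of Lemma~\ref{trans eq lem 1}.} Before feeding the tangential part of the wall into the outer induction (now at dimension $m-1$), one performs an auxiliary wave packet decomposition at an intermediate scale $\rho$ with $R^{1/2} \ll \rho \ll R$ and localises to an $R^{1/2+\delta_m}$-ball $B$ and a cap $\tau$ of radius $\rho^{-1/2+\delta_m}$. Lemma~\ref{trans eq lem 1} then upgrades the standard H\"ormander $L^2$ bound on $N_{R^{1/2+\delta_m}}(Z) \cap B$ to one on $N_{\rho^{1/2+\delta_m}}(Z) \cap B$, at the price of a favorable factor $(\rho/R)^{\mu(n,\sigma,m)/2}$. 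It is precisely this factor that—combined with the polynomial-partitioning gain—matches the drop in the target exponent $e_{k,n,\sigma}(p)$ when descending from dimension $m$ to $m-1$. The three regimes in the definition of $\mu(n,\sigma,m)$ are designed to mirror the three regimes of $\bar p(n,\sigma,k)$, and the induction therefore closes with the exponent predicted by Theorem~\ref{k-broad theorem}.

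\emph{Main obstacle.} The delicate part is the bookkeeping that links $e_{k,n,\sigma}(p)$, $\mu(n,\sigma,m)$, and the three regimes of $\bar p(n,\sigma,k)$, while controlling the sub-polynomial losses. One must verify that at each inductive pass the deficit introduced by the wall analysis is strictly smaller than $\vartheta_m$, so that the $R^{\vartheta_m}$ slack can absorb the $R^{O(\delta_m)}$ losses from Lemma~\ref{trans eq lem 1} and from Wongkew's theorem (invoked inside its proof to count balls covering $\Pi \cap N_{\rho^{1/2+\delta_m}}(Z)$). The hierarchy $\delta \ll \delta_{n-1} \ll \cdots \ll \delta_1 \ll \varepsilon$ is fixed precisely so that each level of the outer induction improves $\vartheta_m$ by a fixed proportion, the partitioning degrees $D_{m,\varepsilon}$ remain admissible, and the cellular induction in $R$ terminates. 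Verifying consistency of these constraints in the general-signature case—where $\mu(n,\sigma,m)$ is a genuinely piecewise-linear function of $m$, rather than the uniform $n-m$ of the elliptic case—is the single new numerological point relative to \cite{GHI2019}; once it is checked, the remaining steps are essentially identical to those of the maximal-signature argument.
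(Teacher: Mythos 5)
Your proposal is correct and takes essentially the same route as the paper: the paper's proof is precisely the observation that the argument of \cite[Proposition~10.1]{GHI2019} goes through verbatim once the exponent $n-m$ from the elliptic transverse equidistribution estimate is replaced by $\mu(n,\sigma,m)$ from Lemma~\ref{trans eq lem 1}, with the induction closing because of the choice of $e_{k,n,\sigma}(p)$. Your sketch reconstructs that polynomial-partitioning induction and correctly isolates the same single point of departure, namely the substitution of the partial equidistribution gain $(\rho/R)^{\mu(n,\sigma,m)/2}$ and the attendant numerology.
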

Here, $\T_Z$ is defined as in $\S$\ref{trans eq sec}; that is,
\begin{equation*}
\T_Z := \{(\theta,v) \in \mathbb{T} :\; T_{\theta,v}\text{ is }R^{-1/2+\delta_m}\text{-tangent to } Z\text{ in }B(0,R)\}
\end{equation*}
and the parameters $D_{m,\varepsilon}$, $\theta_m$, $\bar{A}_\varepsilon$, $\delta,\delta_1,\ldots,\delta_{n-1}$, as well as translates of H\"ormander-type operators, are defined as in \cite{GHI2019}.

\begin{proof}
The proof is the same as that of Proposition 10.1 in \cite{GHI2019}, with the exception that the exponent $n-m$ in inequality (10.30) of \cite{GHI2019}, which is due to equidistribution under a positive definite assumption on the phase, is here replaced by $\mu(n,\sigma,m)$, the exponent appearing in the equidistribution Lemma \ref{trans eq lem 1}. This exponent is carried through to the end of the inductive proof, and the induction closes due to the above definition of $e_{k,n,\sigma}(p)$.
\end{proof}




\section{Proof of Theorem~\ref{main theorem}: Narrow decoupling}\label{narrow sec}




\subsection{Overview} It remains to pass from the $k$-broad estimates of Theorem~\ref{k-broad theorem} to linear estimates for the oscillatory integral operators $T^{\lambda}$. As in \cite{Guth2018, GHI2019}, this is achieved via the Bourgain--Guth method from \cite{Bourgain2011}, which recursively partitions the norm $\|T^{\lambda}f\|_{L^p(B_R)}$ into two pieces:\medskip

\noindent \textbf{Broad part.} This is the part of the norm which can be estimated using the $k$-broad inequalities from Theorem~\ref{k-broad theorem}.\medskip

\noindent \textbf{Narrow part.} This consists of the remaining contributions to the norm, which cannot be controlled using the $k$-broad estimates.\medskip

In this section the tools for analysing the narrow part are reviewed. The main ingredient is a Wolff-type $\ell^p$-decoupling inequality: see Proposition~\ref{dec prop} below. In the next section, a sketch of the Bourgain--Guth argument is provided which combines Theorem~\ref{k-broad theorem} and Proposition~\ref{dec prop} (or, more precisely, Corollary~\ref{dec cor}) in order to deduce Theorem~\ref{main theorem}. 




\subsection{Decoupling regions} Let $h \colon B^{n-1} \to \R$ be a smooth function such that 
\begin{equation*}
    h(0) = 0 \qquad \textrm{and} \qquad \partial_u h(0) = 0
\end{equation*}
and such that the Hessian $\partial_{uu}^2 h(u)$ is non-degenerate for all $u \in B^{n-1}$ with fixed signature $0 \leq \sigma \leq n-1$. In such cases $h$ is said to be of signature $\sigma$. Consider the surface
\begin{equation*}
    \Sigma[h] := \big\{ \Gamma_h(u) : u \in B^{n-1} \big\}, \quad \textrm{where } \Gamma_h(u) := \begin{pmatrix}
    u \\
    h(u)
    \end{pmatrix},
\end{equation*}
which is of non-vanishing Gaussian curvature and has second fundamental form of constant signature $\sigma$. Note that the Gauss map $G_h \colon B^{n-1} \to S^{n-1}$ associated to this surface is given by
\begin{equation*}
    G_h(u) := \frac{1}{(1 + |\partial_{u}h(u)|^2)^{1/2}} G_{h,0}(u) \qquad \textrm{where} \qquad G_{h,0}(u) :=
    \begin{pmatrix}
    -\partial_{u}h(u) \\
    1
    \end{pmatrix}.
\end{equation*}
In particular, $G_h(0) = \vec{e}_n$ and the image set $G_h(B^{n-1})$ is contained in a spherical cap in the northern hemisphere, centred around the north pole. 

Given $\bar{u} \in B^{n-1}$ and $\delta > 0$ define the matrices
\begin{equation*}
    [h]_{\bar{u}} := 
    \begin{bmatrix}
        I_{n-1} & 0 \\
        \partial_{u}h(\bar{u})^{\top} & 1 
    \end{bmatrix} \qquad \textrm{and} \qquad [h]_{\bar{u},\delta} := [h]_{\bar{u}} \circ D_{\delta}
\end{equation*}
where $D_{\delta} = \mathrm{diag}[\delta^{1/2}, \dots, \delta^{1/2}, \delta]$ corresponds to an anisotropic (parabolic) scaling of the coordinates. This definition may be partially motivated by considering a quadratic form $Q(u) := \frac{1}{2} \inn{\mathcal{L}u}{u}$ for $\mathcal{L} \colon \R^{n-1} \to \R^{n-1}$ an invertible, self-adjoint linear mapping. By forming the Taylor expansion of $\Gamma_Q$, it follows that
\begin{equation}\label{dec reg 1}
    \Gamma_{Q}(\bar{u} + \delta^{1/2} u) = [Q]_{\bar{u},\delta} \cdot \Gamma_{Q}(u) + \Gamma_{Q}(\bar{u}).
\end{equation}
In particular, the above identity shows that the surface $\Sigma[Q]$ can be diffeomorphically mapped to a $\delta^{1/2}$-cap\footnote{In particular, the set $\Gamma_{Q}\big(B(\bar{u},\delta^{1/2})\big)$.} via an affine transformation of the ambient space. Moreover, the matrix $[Q]_{\bar{u},\delta}$  corresponds to the linear part of this affine transformation. 

\begin{definition}\label{slab def} A \textit{$\delta^{1/2}$-slab} on $\Sigma[h]$ is a set of the form
\begin{equation*}
    \theta(\bar{u}; \delta) := \big\{ \xi \in \hat{\R}^n : \xi = [h]_{\bar{u}, \delta} \cdot \eta + \Gamma_h(\bar{u}) \textrm{ for some $\eta \in [-1,1]^n$}\big\}. 
\end{equation*}
If $\theta = \theta(\bar{u}; \delta)$ is a $\delta^{1/2}$-slab, then $\bar{u}$ is referred to as the \textit{centre} of the slab and in such cases the notation $\bar{u} = u_{\theta}$ is used. It will also be convenient to write $[h]_{\theta}$ for $[h]_{u_{\theta}, \delta}$ whenever $\theta = \theta(u_{\theta}; \delta)$. 
\end{definition}

These regions are defined in view of the scaling considerations discussed above. In particular, in the quadratic case, where $h = Q$ as above, the slabs inherit a scaling structure from \eqref{dec reg 1}, as described in the proof of Lemma~\ref{dec rescaling lemma} below.

\begin{definition} Given $V$ a subspace of $\R^n$, a \textit{$\delta^{1/2}$-slab decomposition on $\Sigma[h]$ along $V$} is a family $\Theta(V, \delta)$ of $\delta^{1/2}$-slabs satisfying:
\begin{enumerate}[i)]
    \item The $\delta^{1/2}$-slabs belonging to $\Theta(V, \delta)$ are finitely-overlapping, and in particular the maximum number of overlapping slabs is bounded by a dimensional constant.
    \item $\angle(G_h(u_{\theta}), V) \leq \delta^{1/2}$ for all $\theta \in \Theta(V, \delta)$.
\end{enumerate}
\end{definition}

As in \S\ref{trans eq sec} (see \eqref{TE2 1}), to avoid degenerate situations it is assumed that
\begin{equation}\label{V angle}
    \overline{\angle}(V, e_n^{\perp}) := \max_{v \in V\cap S^{n-1}} \angle(v, e_n^{\perp}) \gtrsim 1.
\end{equation}
Thus, any maximal $\delta^{1/2}$-slab decomposition $\Theta(V,\delta)$ essentially forms a decomposition of the neighbourhood of the $(d-1)$-dimensional submanifold
\begin{equation*}
  \Sigma[h;V] := \big\{\Gamma_h(u) \in \Sigma[h] : G_h(u) \in V\}
\end{equation*}
of height $\delta$ in the normal direction to $\Sigma[h]$ and of width $\delta^{1/2}$ in the tangential directions to $\Sigma[h]$. 



\subsection{Constant coefficient decoupling: quadratic case} For $n \geq d \geq 2$ and $0 \leq \sigma \leq n-1$ such that $n-1-\sigma$ is even, define the exponents
\begin{align}\label{dec Leb exp}
    p_{\mathrm{dec}}(n,\sigma,d) &:= \left\{\begin{array}{lll}
        \infty &  \textrm{if} & 2 \leq d \leq \frac{n-\sigma +1}{2}\\[3pt]
         2\cdot\frac{2d - n + \sigma +3}{2d - n + \sigma - 1} & \textrm{if} & \frac{n-\sigma + 1}{2} \leq d \leq \frac{n + \sigma +1}{2} \\[3pt]
        2 \cdot \frac{2d - n + 1}{2d - n - 1} & \textrm{if} &  \frac{n + \sigma +1}{2} \leq d \leq n
    \end{array} \right. , \\
\label{decoupling exponent}
    e(n,\sigma,d) &:= \left\{\begin{array}{lll}
        d-1 &  \textrm{if} & 2 \leq d \leq \frac{n-\sigma +1}{2}\\[3pt]
         \frac{d-1}{2} + \frac{n - 1 - \sigma}{4} & \textrm{if} & \frac{n-\sigma + 1}{2} \leq d \leq \frac{n + \sigma +1}{2} \\[3pt]
        \frac{n - 1}{2} & \textrm{if} &  \frac{n + \sigma +1}{2} \leq d \leq n
    \end{array} \right. .
\end{align}
With this and the definitions from the previous subsection, the main decoupling inequality reads as follows.

\begin{proposition}\label{dec prop} Let $2 \leq d \leq n$, $0 \leq \sigma \leq n-1$ with $n-1-\sigma$ even and $\delta >0$. Suppose that $h \colon B^{n-1} \to \R$ is of signature $\sigma$, that $V \subseteq \R^n$ is a vector subspace of dimension $d$ satisfying \eqref{V angle} and $\Theta(V,\delta)$ is $\delta^{1/2}$-slab decomposition on $\Sigma[h]$ along $V$. For all $2 \leq p \leq p_{\mathrm{dec}}(n,\sigma,d)$ and $\varepsilon > 0$, the inequality
\begin{equation*}
    \big\| \sum_{\theta \in \Theta(V,\delta)} F_{\theta} \big\|_{L^p(\R^n)} \lesssim_{\varepsilon, h} \delta^{- e(n,\sigma,d)(1/2 - 1/p)  - \varepsilon}  \Big(\sum_{\theta \in \Theta(V,\delta)}\| F_{\theta} \|_{L^p(\R^n)}^p \Big)^{1/p}
\end{equation*}
holds whenever $(F_{\theta})_{\theta \in \Theta(V,\delta)}$ is a tuple of functions satisfying $\supp \hat{F}_{\theta} \subseteq \theta$ for all $\theta \in \Theta(V,\delta)$. 
\end{proposition}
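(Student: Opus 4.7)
The plan is to reduce to the prototypical quadratic phase and then combine known $\ell^2$-decoupling results for elliptic and hyperbolic parabol\ae\ via a cylindrical factorization argument that accommodates the ``flat'' directions of the slice $\Sigma[h; V]$. By a smooth change of variables localized to a small neighborhood of the origin, in the spirit of the reductions in \cite[\S4]{GHI2019}, one may assume $h = Q_\sigma$ is the standard quadratic form of signature $\sigma$ from Example~\ref{prototypical example}; the error introduced in this linearization is absorbed into the $\delta^{-\varepsilon}$ factor by standard parabolic rescaling and $\varepsilon$-induction on $\delta$.

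In the quadratic model the Gauss map is affine, so the centers $\{u_\theta : \theta \in \Theta(V,\delta)\}$ lie on a $(d-1)$-dimensional affine subspace $A \subset \R^{n-1}$ and $\Sigma[Q_\sigma; V]$ is the graph of the restricted form $Q_\sigma|_A$. The second fundamental form of this $(d-1)$-dimensional submanifold is governed by $Q_\sigma|_A$, and the same linear-algebra computation used in the proof of Lemma~\ref{trans eq lem 2}, namely the bound $\dim W \leq (n-1-\sigma)/2$ on any $Q_\sigma$-isotropic subspace $W \subset \R^{n-1}$, yields the worst-case estimate $\mathrm{rank}(Q_\sigma|_A) \geq r$ with $r := \max\{0,\, d - 1 - (n-1-\sigma)/2\}$. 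After a linear change of coordinates diagonalizing $Q_\sigma|_A$, the slice splits as a curved $r$-dimensional paraboloid (of controlled signature) cylindered over $(d-1-r)$ flat directions.

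The decoupling inequality then follows by factoring the slab decomposition along this cylinder: on the curved $r$-dimensional factor apply Bourgain--Demeter $\ell^2$-decoupling, in its elliptic form \cite{Bourgain2013, BD2015} when $Q_\sigma|_A$ is definite and in its hyperbolic form \cite{BD2017} otherwise, giving no $\delta^{-\varepsilon}$-loss up to the Lebesgue exponent $2(r+2)/r$; on the $(d-1-r)$-dimensional flat factor apply only the trivial triangle inequality, which contributes a full $(\#\text{flat slabs})^{1-1/p}$ loss. Converting the resulting $\ell^2 L^p$-decoupling into the $\ell^p L^p$ form required by the proposition via Minkowski,
\begin{equation*}
    \Bigl(\sum_\theta \|F_\theta\|_p^2\Bigr)^{1/2} \leq (\#\Theta)^{1/2 - 1/p} \Bigl(\sum_\theta \|F_\theta\|_p^p\Bigr)^{1/p},
\end{equation*}
and using $\#\Theta \sim \delta^{-(d-1)/2}$, produces the advertised $e(n,\sigma,d)$ and $p_{\mathrm{dec}}(n,\sigma,d)$ in each of the three regimes: in the small-$d$ regime ($r=0$) only the trivial bound survives; in the large-$d$ regime ($r$ as large as possible) the full Bourgain--Demeter range is reached; in the intermediate regime the curved and flat contributions combine to give the hybrid formula.

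The hardest step, and the one I expect to be the main obstacle, is the exponent accounting in the middle regime, together with a precise identification of the worst-case $V$. One must verify that the $V$ whose associated $V_{\mathrm{sl}}$ contains a maximal $Q_\sigma$-isotropic subspace is indeed the extremal configuration (mirroring the construction of $V'_{\mathrm{aux}}$ in Lemma~\ref{trans eq lem 2}), check that the induced signature on the curved factor dictates whether the elliptic or hyperbolic Bourgain--Demeter theorem is applicable at the endpoint $p = p_{\mathrm{dec}}$, and confirm that the cylindrical factorization is robust enough under small perturbations of $V$ that the final $\ell^p L^p$ exponent reduces to exactly $(d-1)/2 + (n-1-\sigma)/4$ in the intermediate range.
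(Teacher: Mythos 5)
Your overall architecture matches the paper's: reduce to the quadratic model, analyse the rank of the restricted form on the slice $\Sigma[Q;V]$, decouple cylindrically (Bourgain--Demeter on the curved factor, something elementary on the flat factor), and upgrade to general $h$ by a Pramanik--Seeger induction on scales. However, there are two concrete quantitative gaps. First, handling the flat factor by ``only the trivial triangle inequality, which contributes a full $(\#\text{flat slabs})^{1-1/p}$ loss'' does not yield the stated exponent. The flat pieces have finitely overlapping Fourier supports, so they are almost orthogonal in $L^2$; interpolating Plancherel at $p=2$ with the triangle inequality at $p=\infty$ gives a loss of $(\#\text{flat slabs})^{1-2/p}$, i.e.\ $\delta^{-\nu(1/2-1/p)}$ where $\nu$ is the flat dimension, and this is exactly what is needed to land on $e(n,\sigma,d)(1/2-1/p)$. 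Your $(\#)^{1-1/p}$ loss overshoots by a factor $\delta^{-\nu/(2p)}$, so the exponent you obtain is strictly worse than $e(n,\sigma,d)$ whenever $\nu>0$ and $p<\infty$. Second, your lower bound $\mathrm{rank}(Q_\sigma|_A)\geq \max\{0,\,d-1-(n-1-\sigma)/2\}$, coming only from the isotropic-subspace bound, misses the additional constraint that the flat dimension is at most $n-d$ (equivalently $\mathrm{rank}\geq 2d-n-1$). That second bound is what produces the third regime $d\geq(n+\sigma+1)/2$, where $e(n,\sigma,d)=(n-1)/2$ and $p_{\mathrm{dec}}(n,\sigma,d)=2\cdot\tfrac{2d-n+1}{2d-n-1}$; with your $r$ alone, both the loss and the range you would obtain there are strictly weaker than claimed. (In the paper this is Lemma~\ref{rest form lem}: both bounds follow from the observation that neither $E_-\oplus E_0$ nor $E_+\oplus E_0$ can meet the positive, resp.\ negative, eigenspace of $\mathcal{L}$.)

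Two further points that you flag but do not resolve also need an argument. Concerning the range of $p$: your $r$ is only a lower bound on the rank, so for a given $V$ the curved factor may have dimension $r'>r$, in which case Bourgain--Demeter only covers $p\leq 2(r'+2)/r'<p_{\mathrm{dec}}(n,\sigma,d)$. The paper closes this not by identifying a worst-case $V$ but by interpolating the combined decoupling inequality against the trivial bound $\|\sum_{\theta}F_\theta\|_{L^\infty}\lesssim\delta^{-(d-1)/2}\max_\theta\|F_\theta\|_{L^\infty}$, which extends the range to all of $[2,p_{\mathrm{dec}}(n,\sigma,d)]$ without worsening the exponent. Finally, for indefinite slices the $\ell^2L^p$ form of Bourgain--Demeter with only $\delta^{-\varepsilon}$ loss is false in part of the range $p\leq 2(r+2)/r$ (hyperbolic paraboloids contain linear subspaces); one must invoke the $\ell^pL^p$ formulation of \cite{BD2017} with loss $\delta^{-r(1/4-1/(2p))-\varepsilon}$ directly, which coincides numerically with your ``$\ell^2$ then Minkowski'' accounting but is the statement that is actually available.
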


For the $2 \leq d \leq \frac{n-\sigma +1}{2}$ range the decoupling is elementary, but for the remaining $d$ values Proposition \ref{dec prop} relies on the Bourgain--Demeter decoupling theorem for surfaces of non-vanishing Gaussian curvature \cite{BD2017}.  

In this subsection the proof of Proposition~\ref{dec prop} (or, more precisely, the reduction of this proposition to the main theorem in \cite{BD2017}) is described in the special case where the surface under consideration is quadratic. In particular, here $h := Q$ for some quadratic form
\begin{equation}\label{quad dec 1}
    Q(u) := \frac{1}{2} \inn{\mathcal{L}u}{u} 
\end{equation}
where $\mathcal{L} \colon \R^{n-1} \to \R^{n-1}$ is an invertible, self-adjoint linear mapping of signature $\sigma$. This prototypical case is essentially treated in \cite{BEH} (see also \cite{BD2017}) but, for completeness, the details are given.\medskip

\noindent\textit{Slice geometry.} Fix $Q$ as in \eqref{quad dec 1} and a $d$-dimensional subspace $V$ satisfying \eqref{V angle}. The first step is to understand the basic geometry of $\Sigma[Q;V]$. This is a quadratic surface, associated to some potentially degenerate quadratic form. The key is to determine the possible degree of degeneracy, which depends on the signature $\sigma$ of the original matrix $\mathcal{L}$.

For $Q$ as in \eqref{quad dec 1}, the unnormalised Gauss map $G_{Q,0}$ is an affine function. Thus,  the preimage
\begin{equation}\label{quad dec 1.5}
    A_u := \big\{u \in \R^{n-1} : G_{Q}(u) \in V\big\}
\end{equation}
is an affine subspace of dimension $d-1$ (see also Example \ref{Gauss preimage example} above) and
\begin{equation*}
    \Sigma[Q;V] = \Sigma[Q] \cap A_{\xi}, \qquad \textrm{where $A_{\xi} := A_u \times \R$}.
\end{equation*}
In particular, $\Sigma[Q;V]$ is the graph of the form $Q$ restricted to the subspace $A_u$. Furthermore, if $V_u$ denotes the $(d-1)$-dimensional linear subspace parallel to $A_u$, then $\Sigma[Q;V]$ is the image of the graph of $Q$ over $V_u$ under an invertible affine transformation.\medskip

\noindent\textit{Restrictions of quadratic forms.} Given a linear subspace $U \subseteq \R^{n-1}$ of dimensions $d-1$, consider the restriction $Q|_U$ of the quadratic form $Q$ to $U$, which is a (possibly degenerate) quadratic form on $U$. In particular, there exists a self-adjoint linear map $\mathcal{L}_{U} \colon U \to U$ such that $Q|_U(u) = \frac{1}{2} \inn{\mathcal{L}_{U}(u)}{u}$ for all $u \in U$. For $\rho > 0$ let $\mathbf{N}(\mathcal{L}_U;\rho)$ denote the number of eigenvalues of $\mathcal{L}_U$ inside the interval $(-\rho,\rho)$ and let $\rho(\mathcal{L}) > 0$ denote the minimum modulus of the eigenvalues of $\mathcal{L}$. 

The following lemma is a minor modification of \cite[Lemma 3.3]{BEH}, which in turn is adapted from the proof of Proposition 3.2 in \cite{BD2017}.

\begin{lemma}[\cite{BEH,BD2017}]\label{rest form lem} Let $2 \leq d \leq n$, $0 \leq \sigma \leq n-1$ with $n-1 - \sigma$ even and $\mathcal{L} \colon \R^{n-1} \to \R^{n-1}$ be an invertible, self-adjoint linear mapping of signature $\sigma$. If $U$ is a vector space of dimension $d-1$, then 
\begin{equation*}\label{flat dimensions}
    \mathbf{N}\big(\mathcal{L}_U; \rho(\mathcal{L}) \big) \leq 
    \nu(n,\sigma,d) := 
    \left\{
    \begin{array}{ll}
        d-1 & \textrm{if $1 \leq d \leq \frac{n-\sigma + 1}{2}$} \\[3pt]
        \frac{n-\sigma-1}{2} & \textrm{if $\frac{n-\sigma + 1}{2} \leq d \leq \frac{n+\sigma + 1}{2}$}  \\[3pt]
        n-d &  \textrm{if $\frac{n+\sigma+1}{2} \leq d \leq n$}
    \end{array}\right. ,
\end{equation*}
where $\mathcal{L}_U$ is the linear mapping obtained by restricting to $U$ the quadratic form associated to $\mathcal{L}$, as described above. 
\end{lemma}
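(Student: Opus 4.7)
The strategy is to reduce the lemma to an application of Cauchy's interlacing theorem. Fixing an orthonormal basis of $U$, let $P \colon \mathbb{R}^{d-1} \hookrightarrow \mathbb{R}^{n-1}$ denote the associated isometric embedding. Since the bilinear form $(u,v) \mapsto \langle \mathcal{L} u, v\rangle$ on $U \times U$ is represented by $M := P^\top \mathcal{L} P$ in this basis, the self-adjoint operator $\mathcal{L}_U$ is also represented by $M$, and Cauchy's interlacing theorem therefore gives
\begin{equation*}
\lambda_k \geq \mu_k \geq \lambda_{k + n - d} \qquad \text{for } 1 \leq k \leq d-1,
\end{equation*}
where $\mu_1 \geq \cdots \geq \mu_{d-1}$ are the eigenvalues of $\mathcal{L}_U$ and $\lambda_1 \geq \cdots \geq \lambda_{n-1}$ those of $\mathcal{L}$.

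Next, combine the signature assumption with the definition of $\rho(\mathcal{L})$: setting $\sigma_\pm := (n-1\pm\sigma)/2$ (which are non-negative integers since $n-1-\sigma$ is even), one has $\lambda_k \geq \rho(\mathcal{L})$ for $1 \leq k \leq \sigma_+$ and $\lambda_k \leq -\rho(\mathcal{L})$ for $k > \sigma_+$. The lower interlacing bound $\mu_k \geq \lambda_{k+n-d}$ then forces $\mu_k \geq \rho(\mathcal{L})$ whenever $k + (n-d) \leq \sigma_+$, so at least $\bigl(\sigma_+ - (n-d)\bigr)_+$ of the $\mu_k$ satisfy $\mu_k \geq \rho(\mathcal{L})$. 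Applying the analogous argument to $-\mathcal{L}$ (with $\sigma_-$ in place of $\sigma_+$), at least $\bigl(\sigma_- - (n-d)\bigr)_+$ of the $\mu_k$ satisfy $\mu_k \leq -\rho(\mathcal{L})$. Subtracting from the total eigenvalue count yields
\begin{equation*}
\mathbf{N}\bigl(\mathcal{L}_U; \rho(\mathcal{L})\bigr) \;\leq\; (d-1) - \bigl(\sigma_+ - (n-d)\bigr)_+ - \bigl(\sigma_- - (n-d)\bigr)_+.
\end{equation*}

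The remaining task is to verify that the resulting upper bound coincides with $\nu(n,\sigma,d)$ in each of the three regimes delineated by the thresholds $d = (n-\sigma+1)/2$ and $d = (n+\sigma+1)/2$. For $2 \leq d \leq (n-\sigma+1)/2$ both positive parts vanish, producing the bound $d-1$; for $(n-\sigma+1)/2 \leq d \leq (n+\sigma+1)/2$ only the $\sigma_+$-contribution is active, producing $(n-1-\sigma)/2$ after a short simplification; and for $(n+\sigma+1)/2 \leq d \leq n$ both contributions are active, collapsing the bound to $n-d$. There is no substantive obstacle in the argument: the heart of the proof is the single invocation of Cauchy interlacing applied separately to the positive and negative spectral parts of $\mathcal{L}$, and the case analysis reduces to routine arithmetic once the interlacing inequality is in hand.
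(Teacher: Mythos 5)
Your proof is correct, but it reaches the bound by a different route than the paper. The paper's argument is a self-contained dimension count: it decomposes $\R^{n-1} = X_- \oplus X_+$ according to the sign of the eigenvalues of $\mathcal{L}$ and $U = E_- \oplus E_0 \oplus E_+$ according to the position of the eigenvalues of $\mathcal{L}_U$ relative to $\pm\rho(\mathcal{L})$, observes that $(E_-\oplus E_0)\cap X_+ = (E_+\oplus E_0)\cap X_- = \{0\}$ by comparing values of the quadratic form, and deduces $\dim E_\mp + \dim E_0 \leq \dim X_\mp$; the three bounds $d-1$, $\tfrac{n-\sigma-1}{2}$, $n-d$ then drop out directly. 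You instead invoke Cauchy's interlacing theorem for the compression $P^{\top}\mathcal{L}P$ and convert the signature data into lower bounds on the number of eigenvalues of $\mathcal{L}_U$ of modulus at least $\rho(\mathcal{L})$, arriving at the single closed-form bound $(d-1) - (\sigma_+-(n-d))_+ - (\sigma_--(n-d))_+$, which one checks equals $\nu(n,\sigma,d)$ in each regime (your arithmetic here is right). The two arguments are at bottom the same mechanism --- interlacing is itself proved by exactly the subspace-intersection count the paper carries out --- so neither is stronger; the paper's version avoids citing an external theorem and makes the role of the quadratic form's sign transparent, while yours packages the whole estimate into one formula before the case analysis. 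One cosmetic point: your identification $\sigma_\pm = (n-1\pm\sigma)/2$ tacitly fixes which sign is in the majority; since the paper's signature is $|\sigma_+-\sigma_-|$ you should say ``after replacing $\mathcal{L}$ by $-\mathcal{L}$ if necessary'' (harmless, as $\mathbf{N}$ and $\rho$ are unchanged, and your final bound is symmetric in $\sigma_\pm$ anyway).
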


Applying Lemma~\ref{rest form lem} to the subspace $U := V_u$, it follows that the slice $\Sigma[Q;V]$ has at least $d-1 - \nu(n,\sigma, d)$ principal curvatures bounded away from zero. 

\begin{proof}[Proof (of Lemma~\ref{rest form lem})] The desired inequality is equivalent to showing
\begin{equation}\label{quad dec 2}
  \mathbf{N}\big(\mathcal{L}_U; \rho(\mathcal{L}) \big) \leq \min\Big\{ d-1, \frac{n-\sigma-1}{2}, n-d\Big\}.
\end{equation}
The bound $d-1$ is obvious, since the total number of eigenvalues cannot exceed the dimension of $U$. 

In order to prove the remaining bounds, form the following orthogonal decompositions of $\R^{n-1}$ and $U$:
\begin{itemize}
    \item Let $X_-$ and $X_+$ denote the subspaces of $\R^{n-1}$ spanned by the eigenvectors of $\mathcal{L}$ with negative and positive eigenvalues, respectively. 
    \item Let $E_-$, $E_0$ and $E_+$ denote the subspaces of $U$ spanned by the eigenvectors of $\mathcal{L}_U$ with eigenvalues lying in the intervals $(-\infty, -\rho(\mathcal{L})]$, $(-\rho(\mathcal{L}), \rho(\mathcal{L}))$ and $[\rho(\mathcal{L}), \infty)$, respectively.
\end{itemize}
In this notation, 
\begin{equation}\label{quad dec 3}
  \frac{n-\sigma-1}{2} = \min\{\dim X_-, \dim X_+\} \quad \textrm{and} \quad  \mathbf{N}\big(\mathcal{L}_U; \rho(\mathcal{L}) \big) = \dim E_0.
\end{equation}

The key observation is that
\begin{equation*}
   \big( E_- \oplus E_0 \big) \cap X_+ = \big( E_+ \oplus E_0 \big) \cap X_- = \{0\},
\end{equation*}
which is a simple consequence of the definitions. Thus,
\begin{subequations}
\begin{align}
\label{quad dec 4a}
    \dim E_- + \dim E_0 &\leq \dim X_-, \\
\label{quad dec 4b}    
    \dim E_+ + \dim E_0 &\leq \dim X_+,
\end{align}
\end{subequations}
and these inequalities together with \eqref{quad dec 3} immediately imply the $\frac{n-\sigma-1}{2}$ bound in \eqref{quad dec 2}. The remaining bound in \eqref{quad dec 2} follows by summing together \eqref{quad dec 4a} and \eqref{quad dec 4b}, using the fact that $\dim E_- + \dim E_+ = d - 1 - \dim E_0$ and $\dim X_- + \dim X_+ = n-1$. 
\end{proof}

\noindent \textit{Trivial decoupling.} Recall, $V_u$ is the $(d-1)$-dimensional linear subspace parallel to the affine subspace $A_u$ defined in \eqref{quad dec 1.5}. Consider the eigenspace decomposition $V_u = E_- \oplus E_0 \oplus E_+$ defined with respect to $\mathcal{L}_{V_u}$ as in the proof of Lemma~\ref{rest form lem}. The eigenvectors generating $E_0$ have eigenvalues of small modulus and therefore correspond to the (relatively) flat directions of $\Sigma[Q;V]$. Note that $E_0$ has dimension $\nu := \mathbf{N}\big(\mathcal{L}_{V_u}; \rho(\mathcal{L}) \big)$, which is bounded by Lemma~\ref{rest form lem}. In these flat directions one applies a trivial decoupling inequality, based on Plancherel's theorem. 

To make the above discussion precise, note that \begin{equation*}
    \supp \hat{F}_{\theta} \subseteq N_{\delta^{1/2}} A_{\xi} \cap B(0,1) \qquad\textrm{for all $\theta \in \Theta(V;\delta)$},
\end{equation*}
where $A_{\xi} = A_u \times \R$ is the affine subspace introduced above. Since $A_u$ is parallel to $V_u$, one may write $A_u = V_u + b$ for some $b \in \R^{n-1}$. Thus, $W_u^{(0)} := E_- \oplus E_+ + b$ is a subspace of $A_u$ and $A_{\xi}$ may be foliated into translates of $W_{\xi}^{(0)} := W_u^{(0)} \times \R$ by writing
\begin{equation*}
    A_{\xi} = \bigcup_{a \in E_0} W_{\xi}^{(a)} \quad \textrm{for $W_{\xi}^{(a)} := W_u^{(a)} \times \R$ and $W_u^{(a)} := W_u^{(0)} + a$.}
\end{equation*}

Let $\mathcal{A}(V;\delta)$ denote a collection of sets $\alpha := N_{\delta^{1/2}} W_{\xi}^{(a)}$ for $a$ varying over a $\delta^{1/2}$-net in $E_0 \cap B(0,1)$, so that $\mathcal{A}(V;\delta)$ forms a cover of the support of the $\hat{F}_{\theta}$ by finitely-overlapping sets. Note that $\# \mathcal{A}(V;\delta) \lesssim \delta^{-\nu/2}$.

Fix a smooth partition of unity $(\zeta_{\alpha})_{\alpha \in \mathcal{A}(V;\delta)}$. Thus, given any $g \in L^1(\R^{n-1})$ with Fourier support in $N_{\delta^{1/2}} A_{\xi} \cap B(0,1)$, one may write $g = \sum_{\alpha \in \mathcal{A}(V;\delta)} g_{\alpha}$ where each $g_{\alpha}$ is defined via the Fourier transform by $\hat{g}_{\alpha} := \hat{g} \cdot \zeta_{\alpha}$. In particular, 
\begin{equation*}
    F := \sum_{\theta \in \Theta(V;\delta)} F_{\theta} \quad \textrm{may be written as} \quad   F = \sum_{\alpha \in \mathcal{A}(V;\delta)} F_{\alpha} = \sum_{\alpha \in \mathcal{A}(V;\delta)}\sum_{\theta \in \Theta(V;\delta)} (F_{\theta})_{\alpha}.
\end{equation*}
For all $2 \leq p \leq \infty$, an elementary argument shows that
\begin{align}
\nonumber
    \big\|\sum_{\theta \in \Theta(V;\delta)} F_{\theta} \big\|_{L^p(\R^n)} &= \big\|\sum_{\alpha \in \mathcal{A}(V;\delta)} F_{\alpha} \big\|_{L^p(\R^n)} \\
    \label{quad dec 5}   
    &\lesssim \delta^{-\nu(1/2-1/p)} \big(\sum_{\alpha \in \mathcal{A}(V;\delta)}\| \sum_{\theta \in \Theta(V;\delta)} (F_{\theta})_{\alpha} \|_{L^p(\R^n)}^p\big)^{1/p}.
\end{align}
Indeed, this follows by interpolation between the $p = 2$ and $p=\infty$ cases (first setting up the estimate in a suitably general formulation, amenable to interpolation), which follow from Plancherel's theorem and the triangle inequality, respectively.\medskip

\noindent\textit{Applying the Bourgain--Demeter theorem.} Now consider the $(d-1-\nu)$-dimensional eigenspace $E :=  E_- \oplus E_+$. The eigenvectors generating $E$ have eigenvalues of large modulus and correspond to `curved' directions. In particular, the restriction of $Q$ to $E$ is a non-degenerate form. Owing to this, one may take advantage of the Bourgain--Demeter theorem \cite{BD2017}. 

Fix $\alpha = N_{\delta^{1/2}}W_{\xi}^{(a)} \in \mathcal{A}(V;\delta)$ and consider the linear subspace 
\begin{equation*}
  V^{(a)} := \langle G_Q(u) : u \in W_u^{(a)} \rangle \subseteq V.   
\end{equation*}
 It is not difficult to show that $V^{(a)}$ is of dimension $d-\nu$ and
\begin{equation*}
    W_u^{(a)} = \big\{ u \in \R^{n-1} : G_Q(u) \in V^{(a)}\big\}. 
\end{equation*}
Choose coordinates $x = (x', x'')$ for $x' \in V^{(a)}$ and $x'' \in (V^{(a)})^{\perp}$. Fix $x'' \in (V^{(a)})^{\perp}$ and define
\begin{equation*}
    g_{x'',\theta}(x') := (F_{\theta})_{\alpha}(x',x'').
\end{equation*}
By elementary properties of the Fourier transform, it follows that
\begin{equation*}
    \supp \hat{g}_{\theta} \subseteq \mathrm{proj}_{V^{(a)}} \theta 
\end{equation*}

Since the eigenvalues associated to eigenvectors in $E$ are bounded away from zero, it follows that $Q$ restricts to a nondenegerate form on $W_u^{(a)}$. Consequently:
\begin{enumerate}[i)]
    \item $\mathrm{proj}_{V^{(a)}} \Sigma[Q;V^{(a)}]$ is a smooth hypersurface in $V^{(a)}$ of non-vanishing Gaussian curvature.
    \item $\mathrm{proj}_{V^{(a)}} \theta$ are finitely-overlapping and appropriate neighbourhoods of the $\mathrm{proj}_{V^{(a)}} \theta$ form a $\delta^{1/2}$-slab decomposition of the entire hypersurface $\mathrm{proj}_{V^{(a)}} \Sigma[Q;V^{(a)}]$.
\end{enumerate}
For a proof of these observations see, for instance, \cite[Lemma 3.4]{BHS}.

In light of the above, for each fixed $x'' \in (V^{(a)})^{\perp}$, the function $g_{x'', \theta}$ satisfies the hypotheses of the decoupling theorem for negatively-curved surfaces from \cite{BD2017}. Thus, for all $2 \leq p \leq 2 \cdot \frac{d - \nu + 1}{d - \nu-1}$ and $\varepsilon > 0$ the inequality 
\begin{equation*}
    \| \sum_{\theta \in \Theta(V;\delta)} g_{x'', \theta} \|_{L^p(V^{(a)})} \lesssim_{\varepsilon,Q} \delta^{-(d-1 - \nu)(1/4 - 1/2p) - \varepsilon} \big(\sum_{\theta \in \Theta(V;\delta)}\| g_{x'', \theta} \|^p_{L^p(V^{(a)})}\big)^{1/p}
\end{equation*}
holds uniformly in $x''$. Taking $p$ powers, integrating over all $x'' \in (V^{(a)})^{\perp}$ and then taking the $p$ roots, one concludes that 
\begin{equation}\label{quad dec 6}
\| \sum_{\theta \in \Theta(V;\delta)} (F_{\theta})_{\alpha} \|_{L^p(\R^n)} \lesssim_{\varepsilon,Q} \delta^{-(d-1 - \nu)(1/4 - 1/2p) - \varepsilon} \big(\sum_{\theta \in \Theta(V;\delta)}\|  (F_{\theta})_{\alpha} \|^p_{L^p(\R^n)}\big)^{1/p}.
\end{equation}
This efficiently decouples the $L^p$-norms on the right-hand side of \eqref{quad dec 5}.
\medskip

\noindent\textit{Combining the decouplings.} Finally, fix $\theta \in \Theta(V;\delta)$ and observe that 
\begin{equation}\label{quad dec 7}   
    \big(\sum_{\alpha \in \mathcal{A}(V;\delta)}\|  (F_{\theta})_{\alpha} \|^p_{L^p(\R^n)}\big)^{1/p} \leq \Big\| \big(\sum_{\alpha \in \mathcal{A}(V;\delta)} |(F_{\theta})_{\alpha}|^2 \big)^{1/2}  \Big\|_{L^p(\R^n)} \lesssim \|F_{\theta}\|_{L^p(\R^n)}
\end{equation}
by an elementary square function estimate (see, for instance, \cite[Lemma 2.4.6]{Sogge2017}). Combining \eqref{quad dec 5}, \eqref{quad dec 6} and \eqref{quad dec 7}, for all $2 \leq p \leq 2 \cdot \frac{d - \nu + 1}{d - \nu-1}$ and $\varepsilon > 0$ the inequality 
\begin{equation}\label{quad dec 8}
    \big\| \sum_{\theta \in \Theta(V,\delta)} F_{\theta} \big\|_{L^p(\R^n)} \lesssim_{\varepsilon, Q} \delta^{- (d-1 + \nu)(1/4 - 1/2p)  - \varepsilon}  \Big(\sum_{\theta \in \Theta(V,\delta)}\| F_{\theta} \|_{L^p(\R^n)}^p \Big)^{1/p}
\end{equation}
holds. By Lemma~\ref{rest form lem}, the $\delta$ dependence in \eqref{quad dec 8} is at least as good as that in Proposition~\ref{dec prop}. However, Lemma~\ref{rest form lem}, together with the definitions \eqref{flat dimensions}, \eqref{dec Leb exp} and \eqref{decoupling exponent}, also implies that 
\begin{equation}\label{quad dec 9}
  2 \cdot \frac{d - \nu + 1}{d - \nu-1} \leq p_{\mathrm{dec}}(n,\sigma,d),
\end{equation}
and so the range of $p$ in \eqref{quad dec 8} is potentially insufficient for the present purpose. To remedy this, one may interpolate against the trivial inequality
\begin{equation}\label{quad dec 10}
   \big\|\sum_{\theta \in \Theta(V;\delta)} F_{\theta} \big\|_{L^{\infty}(\R^n)} 
\lesssim \delta^{-(d-1)/2}  \max_{\theta \in \Theta(V;\delta)}\|F_{\theta} \big\|_{L^{\infty}(\R^n)}.
\end{equation}
Indeed, the desired decoupling inequality in Proposition~\ref{dec prop} follows by interpolating between \eqref{quad dec 8} and \eqref{quad dec 10}, in view of the exponent relation \eqref{quad dec 9}.




\subsection{Constant coefficient decoupling: general case} To complete the proof of Proposition~\ref{dec prop}, it remains to extend the result from quadratic surfaces to graphs of arbitrary smooth $h$ of signature $\sigma$. This is achieved via a now standard iteration argument originating in the work of Pramanik--Seeger \cite{PS2007}. The argument relies on the fact that, locally, each such $h$ is a small perturbation of a quadratic surface of the same signature, and also on special scaling properties of the decoupling inequalities which manifest in the proof of Lemma~\ref{dec rescaling lemma} below.

Consider the slight generalisation of the setup from the previous subsection where $h \colon \R^{n-1} \to \R$ is a quadratic of signature $\sigma$ defined by
\begin{equation}\label{scale dec 1}
    h(u) := \frac{1}{2} \inn{\mathcal{L}u}{u} + \inn{\vec{b}}{u} + a
\end{equation}
where $\mathcal{L} \colon \R^{n-1} \to \R^{n-1}$ is an invertible, self-adjoint linear mapping of signature $\sigma$, whilst $\vec{b} \in \R^{n-1}$ and $a \in \R$. Fix $V$ a $d$-dimensional subspace satisfying \eqref{V angle}, a pair of scales $0 < \delta < \rho < 1$ and a $\rho^{1/2}$-slab $\alpha$ on $\Sigma[h]$ with $\angle(G(u_{\alpha}), V) \leq \rho^{1/2}$.

\begin{lemma}\label{dec rescaling lemma} With the above setup, suppose $\Theta(\alpha) \subseteq \Theta(V;\delta)$ is a collection of $\delta^{1/2}$-slabs $\theta$ satisfying $\theta \subseteq \alpha$. For all $2 \leq p \leq p_{\mathrm{dec}}(n,\sigma,d)$ and $\varepsilon > 0$, the inequality 
\begin{equation}\label{scale dec 2}
    \big\| \sum_{\theta \in \Theta(\alpha)} F_{\theta} \big\|_{L^p(\R^n)} \lesssim_{\varepsilon, h} (\delta/\rho)^{-e(n,\sigma,d)(1/2 - 1/p)  - \varepsilon}  \Big(\sum_{\theta \in \Theta(\alpha)}\| F_{\theta} \|_{L^p(\R^n)}^p \Big)^{1/p}
\end{equation}
holds whenever $(F_{\theta})_{\theta \in \Theta(\alpha)}$ is a tuple of functions satisfying $\supp \hat{F}_{\theta} \subseteq \theta$ for all $\theta \in \Theta(\alpha)$.
\end{lemma}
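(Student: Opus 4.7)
The plan is a parabolic rescaling argument that reduces the estimate at the slab $\alpha$ to the quadratic-form case of Proposition~\ref{dec prop} established in the previous subsection, but applied at unit scale. Set $L_\alpha := [h]_{u_\alpha, \rho}$ and consider the affine map $\xi \mapsto L_\alpha^{-1}(\xi - \Gamma_h(u_\alpha))$ on the frequency side, which sends $\alpha$ onto $[-1,1]^n$. Since $h$ is quadratic of signature $\sigma$, a direct Taylor expansion of $\Gamma_h$ about $u_\alpha$ (generalising \eqref{dec reg 1} to include the affine terms in \eqref{scale dec 1}) yields
\[
\Gamma_h(u_\alpha + \rho^{1/2} u) \;=\; L_\alpha \, \Gamma_{\tilde h}(u) + \Gamma_h(u_\alpha),
\qquad \tilde h(u) := \tfrac{1}{2}\inn{\mathcal{L}u}{u},
\]
so the restriction of $\Sigma[h]$ to $\alpha$ is the affine image of the unit portion of $\Sigma[\tilde h]$, and $\tilde h$ is a pure quadratic form of the same signature $\sigma$.

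Next I would verify that, under this rescaling, each $\delta^{1/2}$-slab $\theta \in \Theta(\alpha)$ corresponds to a $(\delta/\rho)^{1/2}$-slab $\tilde\theta$ on $\Sigma[\tilde h]$ centred at $\tilde u_\theta := (u_\theta - u_\alpha)/\rho^{1/2}$, while the $d$-dimensional subspace $V$ corresponds (via the dual map) to a $d$-dimensional subspace $\tilde V$ satisfying the non-degeneracy condition \eqref{V angle} and along which the family $\{\tilde\theta\}$ forms a $(\delta/\rho)^{1/2}$-slab decomposition on $\Sigma[\tilde h]$. The key point is that the angle hypotheses $\angle(G_h(u_\alpha), V) \leq \rho^{1/2}$ and $\angle(G_h(u_\theta), V) \leq \delta^{1/2}$ translate, under the rescaling, into $\angle(G_{\tilde h}(\tilde u_\theta), \tilde V) \leq (\delta/\rho)^{1/2}$.

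With the rescaled data in place, the quadratic case of Proposition~\ref{dec prop} applies directly to $\tilde h$ at scale $\delta/\rho$, producing the desired decoupling inequality for the pushforward functions $\tilde F_{\tilde\theta}$. Undoing the change of variables via the dual spatial-side map $x \mapsto L_\alpha^\top x$ transfers this back to the $F_\theta$; the Jacobian factors from this change of variables appear identically on both sides of the decoupling inequality and cancel, leaving precisely the factor $(\delta/\rho)^{-e(n,\sigma,d)(1/2 - 1/p) - \varepsilon}$ claimed in \eqref{scale dec 2}.

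The main obstacle is purely geometric bookkeeping: verifying that $\tilde V$ inherits \eqref{V angle} from the hypothesis $\angle(G_h(u_\alpha), V) \leq \rho^{1/2}$, and that the slab and transversality structure rescales with the correct $\delta/\rho$ parameter. Once this is established, no new analytic input beyond the quadratic case of Proposition~\ref{dec prop} is required, and in particular the range of $p$ and the exponent $e(n,\sigma,d)$ are unchanged by the rescaling.
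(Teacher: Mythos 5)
Your proposal is correct and follows essentially the same route as the paper: an affine frequency-side rescaling by $[h]_{u_\alpha,\rho}$ reducing to the pure quadratic part $Q$ of $h$, verification via the identities $[h]_{\alpha}^{-1}\circ[h]_{\theta} = [Q]_{\tilde\theta}$ and $[h]_{\alpha}^{-1}(\Gamma_h(u_\theta)-\Gamma_h(u_\alpha)) = \Gamma_Q(u_{\tilde\theta})$ that the rescaled slabs are $(\delta/\rho)^{1/2}$-slabs along a rescaled subspace $\tilde V$ (built from the rescaled normals $\rho^{1/2}[h]_\alpha^{-1}N_j$, which stay bounded and transverse precisely because $\angle(G_h(u_\alpha),V)\leq\rho^{1/2}$), followed by an application of the quadratic case of Proposition~\ref{dec prop} at scale $\delta/\rho$. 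The geometric bookkeeping you flag as the main obstacle is exactly the content of the paper's computation, and your Jacobian-cancellation remark matches the paper's ``affine rescaling and modulation'' step.
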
 

\begin{proof} Define functions $\tilde{F}_{\theta}$ via the Fourier transform by
\begin{equation*}
\big(\tilde{F}_{\theta}\big)\;\widehat{}\;(\xi) := \hat{F}_{\theta}\big([h]_{\alpha} \cdot \xi + \Gamma_h(u_{\alpha})\big).
\end{equation*}
and note that it suffices to prove the same inequality but with each $F_{\theta}$ replaced with $\tilde{F}_{\theta}$. Indeed, this follows by applying an affine rescaling and modulation to the functions appearing in both sides of the inequality in \eqref{scale dec 2}. 

By the Fourier support hypothesis on the $F_{\theta}$, it follows that each $\tilde{F}_{\theta}$ has Fourier support in the set
\begin{equation*}
\big\{ \xi \in \hat{\R}^n : \xi= [h]_{\alpha}^{-1}\circ [h]_{\theta} \cdot \eta + [h]_{\alpha}^{-1} \big(\Gamma_h(u_{\theta}) - \Gamma_h(u_{\alpha})\big) \textrm{ for some $\eta \in [-1,1]^n$}\big\}.   
\end{equation*}
Defining $u_{\tilde{\theta}} := \rho^{-1/2}\big(u_{\theta} - u_{\alpha}\big)$ and $\tilde{\theta} := \theta(u_{\tilde{\theta}}; \delta/\rho)$, a simple computation shows that
\begin{equation*}
    [h]_{\alpha}^{-1}\circ [h]_{\theta} = [Q]_{\tilde{\theta}} \qquad \textrm{and} \qquad  [h]_{\alpha}^{-1} \big(\Gamma_h(u_{\theta}) - \Gamma_h(u_{\alpha})\big) = \Gamma_{Q}(u_{\tilde{\theta}}),
\end{equation*}
where $Q$ is the leading homogeneous part of $h$, as defined in \eqref{quad dec 1}. In particular,
\begin{equation*}
    \supp \big(\tilde{F}_{\theta}\big)\;\widehat{}\; \subseteq \tilde{\theta} = \theta(u_{\tilde{\theta}}; \delta/\rho).
\end{equation*}

Let $(N_j)_{j=1}^{n-d}$ be an orthonormal basis for $V^{\perp}$ and write $N_j = (N_j', N_{j,n})$ where $N_j' \in \R^{n-1}$ is the vector formed by the first $n-1$ components of $N_j$. The angle condition \eqref{V angle} implies that the vectors $N_j'$ are quantitatively transverse in the sense that $|\bigwedge_{j=1}^{n-1} N_j'|\gtrsim_h 1$. Define
\begin{equation*}
    \tilde{N}_j := \rho^{1/2}[h]_{\alpha}^{-1} N_j
\end{equation*}
so that $\tilde{N}_j = (N_j', \tilde{N}_{j,n})$ where $\tilde{N}_{j,n} := \rho^{-1/2}\inn{G_{h,0}(u_{\alpha})}{N_j}$. Recall, by hypothesis, $\angle(G_h(u_{\alpha}), V) \leq \rho^{1/2}$ and therefore the vectors $\tilde{N}_j$ have magnitude $O_h(1)$. The vectors $\tilde{N}_j$ also inherit quantitative transversality from the $N_j'$.

Consider the $d$-dimensional subspace $\tilde{V} := \langle \tilde{N}_1, \dots, \tilde{N}_{n-d} \rangle^{\perp}$. A simple computation shows that
\begin{equation*}
    \inn{G_{Q,0}(u_{\tilde{\theta}})}{\tilde{N}_j} = \rho^{-1/2}\inn{G_{h,0}(u_{\theta})}{N_j} .
\end{equation*}
The condition $\angle(G_h(u_{\theta}), V) \leq \delta^{1/2}$ implies $|\inn{G_{h,0}(u_{\theta})}{N_j}| \lesssim_h \delta^{1/2}$ for $1 \leq j \leq n-d$ and, consequently, $\angle(G_{Q}(u_{\tilde{\theta}}), \tilde{V}) \lesssim_h (\delta/\rho)^{1/2}$. Thus, the claim follows by applying the decoupling inequality from the previous step to the function $Q$ at scale $\sim \delta/\rho$. 
\end{proof}

Following \cite{PS2007}, the general case of Proposition~\ref{dec prop} may be deduced from the quadratic case via an induction-on-scale procedure, using Lemma~\ref{dec rescaling lemma}. 

\begin{proof}[Proof (of Proposition~\ref{dec prop}: general case)] Fix $h \colon B^{n-1} \to \R$ of signature $\sigma$, a vector subspace  $V \subseteq \R^n$ of dimension $d$ satisfying \eqref{V angle} and a Lebesgue exponent $2 \leq p \leq \infty$. For $0 < \delta < 1$ define the \textit{decoupling constant} $\mathfrak{D}_{h,V,p}(\delta)$ to be the infimum over all constants $C \geq 1$ for which the inequality 
\begin{equation*}
    \big\| \sum_{\theta \in \Theta(V,\delta)} F_{\theta} \big\|_{L^p(\R^n)} \leq C\delta^{-e(n,\sigma,d)(1/2 - 1/p) }  \Big(\sum_{\theta \in \Theta(V,\delta)}\| F_{\theta} \|_{L^p(\R^n)}^p \Big)^{1/p}
\end{equation*}
holds for all $\delta^{1/2}-$slab decompositions $\Theta(V,\delta)$ on $\Sigma[h]$ along $V$ and all tuples of functions $(F_{\theta})_{\theta \in \Theta(V,\delta)}$ satisfying $\supp \hat{F}_{\theta} \subseteq \theta$ for all $\theta \in \Theta(V,\delta)$. With this notation, given $\varepsilon > 0$ the problem is to show that
\begin{equation}\label{cc dec 1}
    \fD_{h,V,p}(\delta) \lesssim_{h,\varepsilon} \delta^{- \varepsilon}.
\end{equation}

Fixing $\varepsilon > 0$, the argument proceeds by induction on the scale $\delta$, using the prototypical cases proved above to facilitate the induction step. In particular, let $\delta_{\circ} = \delta_{\circ}(h,\varepsilon) > 0$ be a fixed small parameter, depending only on $h$ and $\varepsilon$ and chosen sufficiently small for the purpose of the forthcoming argument. If $1 > \delta \geq \delta_{\circ}$, then the desired bound \eqref{cc dec 1} follows immediately from H\"older's inequality. This serves as the base case for the induction.\medskip 

\noindent \textbf{Induction hypothesis:} Fix $0 < \delta < \delta_{\circ}$ and suppose 
\begin{equation}\label{cc dec 2}
    \fD_{h,V,p}(\delta') \leq \mathbf{C}_{h,\varepsilon} (\delta')^{- \varepsilon}
\end{equation}
holds whenever $2\delta \leq \delta' < 1$.\medskip

Here $\mathbf{C}_{h,\varepsilon}$ is a fixed constant, which depends only on the admissible objects $h$ and $\varepsilon$, chosen sufficiently large for the purpose of the forthcoming argument. In particular, it suffices to take $\mathbf{C}_{h, \varepsilon}$ so that \eqref{cc dec 2} holds in the base case $1 > \delta' \geq \delta_{\circ}$ for the choice of $\delta_{\circ}$ determined below. 

Fix $\Theta(V,\delta)$ a $\delta^{1/2}$-slab decomposition on $\Sigma[h]$ along $V$. Let $\delta \ll \rho < 1$ be a second small parameter. Later in the argument $\rho$ is fixed by taking $\rho \sim_h \delta^{2/3}$, but for now it is helpful to keep it a free parameter. Fix a $\rho^{1/2}$-slab decomposition $\Theta(V,\rho)$ with the property that every $\theta\in \Theta(V,\delta)$ lies in at least one $\alpha \in \Theta(V,\rho)$.

Given a tuple of functions $(F_{\theta})_{\theta \in \Theta(V,\delta)}$ as in the statement of the proposition, form a tuple of functions $(F_{\alpha})_{\alpha \in \Theta(V,\rho)}$ by partitioning the collection $\Theta(V,\delta)$ into disjoint families $\Theta(\alpha)$ with $\theta \subseteq \alpha$ for all $\theta \in \Theta(\alpha)$ and taking
\begin{equation*}
 F_{\alpha} := \sum_{\theta \in \Theta(\alpha)} F_{\theta} \qquad \textrm{for all $\alpha \in \Theta(V,\rho)$.}
\end{equation*}
Clearly, $\supp \hat{F}_{\alpha} \subseteq \alpha$ and so, applying the induction hypothesis \eqref{cc dec 2} with $\delta' = \rho \geq 2 \delta$, one deduces that
\begin{align}\nonumber
      \big\| \sum_{\theta \in \Theta(V,\delta)} F_{\theta} \big\|_{L^p(\R^n)} &= \big\| \sum_{\alpha \in \Theta(V,\rho)} F_{\alpha} \big\|_{L^p(\R^n)} \\
      \label{cc dec 3}
      &\leq \mathbf{C}_{h,\varepsilon} \rho^{-e(n,\sigma,d)(1/2 - 1/p)  - \varepsilon}  \Big(\sum_{\alpha \in \Theta(V,\rho)}\| F_{\alpha} \|_{L^p(\R^n)}^p \Big)^{1/p}.
\end{align}
    
Fixing $\alpha \in \Theta(V,\rho)$, the problem is now to decouple the norm
\begin{equation*}
    \| F_{\alpha} \|_{L^p(\R^n)} = \big\| \sum_{\theta \in \Theta(\alpha)} F_{\theta} \big\|_{L^p(\R^n)}.
\end{equation*}
To achieve this, $h$ is locally approximated by a quadratic which facilitates application of the decoupling for quadratic surfaces derived in the previous steps. Let $u_{\alpha} \in B^{n-1}$ denote the centre of $\alpha$ and consider the second order approximation $h_{\alpha} \colon \R^{n-1} \to \R$ to $h$ around $u_{\alpha}$, given by 
\begin{equation*}
    h_{\alpha}(u) := \frac{1}{2} \inn{\partial_{uu}^2h(u_{\alpha})(u - u_{\alpha})}{u - u_{\alpha}} + \inn{\partial_{u}h(u_{\alpha})}{u - u_{\alpha}} + h(u_{\alpha}).
\end{equation*}
Note that each of the mappings $h_{\alpha}$ is of the form \eqref{scale dec 1}.

Let $\xi \in \theta = \theta(u_{\theta},\delta) \in \Theta(\alpha)$ so that there exists $\eta = (\eta',\eta_n) \in [-1,1]^{n-1} \times [-1,1]$ such that
\begin{equation*}
    \xi = [h]_{\theta} \cdot \eta + \Gamma_h(u_{\theta}).
\end{equation*}
A simple computation shows 
\begin{equation*}
    \xi = [h_{\alpha}]_{\theta} \cdot \tilde{\eta} + \Gamma_{h_{\alpha}}(u_{\theta}),
\end{equation*}
where $\tilde{\eta} = (\eta',\tilde{\eta}_n)$ for
\begin{equation*}
    \tilde{\eta}_n := \eta_n + \delta^{-1/2} \inn{\partial_{u}h(u_{\theta}) - \partial_{u}h_{\alpha}(u_{\theta})}{\eta'} + \delta^{-1}\big(h(u_{\theta}) - h_{\alpha}(u_{\theta})\big).
\end{equation*}
By Taylor's theorem and the hypothesis $\theta \in \Theta(\alpha)$, one deduces that
\begin{align*}
    |h(u_{\theta}) - h_{\alpha}(u_{\theta})| &\lesssim_h |u_{\theta} - u_{\alpha}|^3 \leq \rho^{3/2}, \\
    |\partial_{u}h(u_{\theta}) - \partial_{u}h_{\alpha}(u_{\theta})| &\lesssim_h |u_{\theta} - u_{\alpha}|^2 \leq \rho.
\end{align*}
Thus, taking $\rho := c_h \delta^{2/3}$ for a suitably small constant $c_h > 0$, depending only on the magnitude of the third order derivatives of $h$, one concludes that $\tilde{\eta}_n \in [-2,2]$. 

The previous observations show that each $F_{\theta}$ for $\theta \in \Theta(\alpha)$ has Fourier support in a $(2\delta)^{1/2}$-slab \textit{defined with respect to the quadratic surface $\Sigma[h_{\alpha}]$}. One may therefore apply the (rescaled version of the) decoupling inequality \eqref{scale dec 2} to conclude that
\begin{equation}\label{cc dec 4}
    \| F_{\alpha} \|_{L^p(\R^n)} \lesssim_{h,\varepsilon} (\delta/\rho)^{-e(n,\sigma,d)(1/2 - 1/p)  - \varepsilon/2} \Big(\sum_{\theta \in \Theta(\alpha)} \|  F_{\theta} \big\|_{L^p(\R^n)}\Big)^{1/p}.
\end{equation}
Combining \eqref{cc dec 3} and \eqref{cc dec 4} with the definition of the decoupling constant, 
\begin{equation*}
    \fD_{h,V,\varepsilon}(\delta) \leq C_{h,\varepsilon}  (\delta/\rho)^{\varepsilon/2} \mathbf{C}_{h,\varepsilon}\delta^{- \varepsilon},
\end{equation*}
where $C_{h,\varepsilon}$ is an amalgamation of the implicit constants arising in the above argument. Since, $\delta/\rho  \leq c_h^{-1} \delta_{\circ}^{1/3}$, by choosing $\delta_{\circ}$ from the outset to be sufficiently small, depending only on $h$ and $\varepsilon$, one may ensure that $C_{h,\varepsilon}(\delta/\rho)^{\varepsilon/2} \leq 1$ and so the induction closes. 
\end{proof}




\subsection{Variable coefficient decoupling}
Proposition~\ref{dec prop} can be used to study H\"ormander-type operators, provided that the operator is sufficiently localised. In particular, given a reduced phase function $\phi^{\lambda}$ of signature $\sigma$, recall from \S\ref{basic geometry section} that for a fixed vector $\bar{x} \in B(0,\lambda)$ in the spatial domain, 
\begin{equation*}
    h_{\bar{x}}^{\lambda}(u) := \partial_{x_n}\phi^{\lambda}\big(\bar{x}; \Psi^{\lambda}_{\bar{x}}(u)\big) 
\end{equation*}
is a smooth function of signature $\sigma$ on its domain. Moreover, if the corresponding operator $T^{\lambda}f$ is localised to a small ball around $\bar{x}$, then the Fourier transform of this localised function is supported in a neighbourhood of the surface $\Sigma[h_{\bar{x}}^{\lambda}]$. This facilitates application of the decoupling inequality from Proposition~\ref{dec prop} in this setting. 

To make the above discussion precise, fix a H\"ormander-type operator $T^{\lambda}$ and a function $f \in L^1(B^{n-1})$. Let $\mathcal{T}$ be a decomposition of the domain $B^{n-1}$ into finitely-overlapping balls $\tau \subseteq \R^{n-1}$ of radius $K^{-1}$, each with some centre $\omega_{\tau} \in B^{n-1}$, and fix a smooth partition of unity $\{\psi_{\tau}\}_{\tau \in \mathcal{T}}$ subordinate to $\mathcal{T}$. Correspondingly, decompose $f = \sum_{\tau \in \mathcal{T}} f_{\tau}$ where each $f_{\tau} = f \cdot \psi_{\tau}$; in particular, each $f_{\tau}$ satisfies $\supp f_{\tau} \subseteq \tau$.  

Thus, $T^{\lambda}f = \sum_{\tau \in \mathcal{T}} T^{\lambda}f_{\tau}$ and one is interested in studying this function localised to some ball $B_{K^2} = B(\bar{x}, K^2)$ of radius $K^2$. In view of this, let $\zeta \in C^{\infty}_c(\R^n)$ satisfy $\zeta(x) = 1$ for $x \in [-1,1]^n$ and $\zeta(x) = 0$ for $x \notin [-2,2]^n$ and define $\zeta_{B_{K^2}}(x) := \zeta(K^{-2}(x - \bar{x}))$. Let $T^{\lambda}_{B_{K^2}}$ denote the localised operator given by replacing the amplitude function $a^{\lambda}(x;\omega)$ in $T^{\lambda}$ with $a^{\lambda}(x;\omega)\cdot \zeta_{B_{K^2}}(x)$. The key observation is that, provided $K^2 \leq \lambda$, each function $T^{\lambda}_{B_{K^2}}f_{\tau}$ is essentially Fourier supported in a $K^{-1}$-slab, defined with respect to the function $h^{\lambda}_{\bar{x}}$. In particular, given $\varepsilon > 0$, for each $\tau$ associate a $K^{-(1-\varepsilon)}$-slab
\begin{equation}\label{vc slabs}
    \theta(\tau) := \theta\big(u_{\tau}; K^{-2(1-\varepsilon)}\big)
\end{equation}
defined as in Definition~\ref{slab def}, taking $h = h^{\lambda}_{\bar{x}}$ and $\bar{u} = u_{\tau} := \big(\Psi_{\bar{x}}^{\lambda}\big)^{-1}(\omega_{\tau})$. Let $\zeta_{\tau}$ denote the function obtained by precomposing $\zeta$ with the inverse of the affine transformation $\eta \mapsto \frac{1}{2}\cdot [h^{\lambda}_{\bar{x}}]_{\theta(\tau)} \cdot \eta + \Gamma(h^{\lambda}_{\bar{x}})(u_{\tau})$. Thus,  $\supp \zeta_{\tau} \subseteq \theta(\tau)$ for all $\xi \in \theta(\tau)$.

\begin{lemma}\label{freq loc lemma}  Given $\varepsilon > 0$ and $R^{\varepsilon} \lesssim_{\varepsilon} K^2 \leq \lambda$, with the above definitions,
\begin{equation}\label{freq loc eq}
    T^{\lambda}_{B_{K^2}}f_{\tau}(x) = \big(T^{\lambda}_{B_{K^2}}f_{\tau}\big) \ast \check{\zeta}_{\tau}(x) + \mathrm{RapDec}(R)(1+|x-\bar{x}|)^{-(n+1)}\|f_{\tau}\|_{L^2(B^{n-1})}.
\end{equation}
\end{lemma}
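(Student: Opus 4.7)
The lemma asserts that the Fourier transform of $g:=T^{\lambda}_{B_{K^2}}f_\tau$ is essentially supported in the slab $\theta(\tau)$, up to a rapidly decaying remainder with mild polynomial spatial spreading. Since $\zeta_\tau\equiv1$ on the inner half-slab and is supported in $\theta(\tau)$, we have $g\ast\check\zeta_\tau=\big(\hat g\cdot\zeta_\tau\big)\check{}\,$, so the claim \eqref{freq loc eq} is equivalent to showing that $\big(\hat g\cdot(1-\zeta_\tau)\big)\check{}\,$ satisfies the stated $\mathrm{RapDec}(R)$ bound with polynomial spatial decay. I would therefore compute
\[
\hat g(\xi)=\int_\tau f_\tau(\omega)\,I_\omega(\xi)\,d\omega,\qquad I_\omega(\xi):=\int a^\lambda(y;\omega)\,\zeta_{B_{K^2}}(y)\,e^{2\pi i[\phi^\lambda(y;\omega)-\langle y,\xi\rangle]}\,dy,
\]
and prove $|I_\omega(\xi)|\lesssim\mathrm{RapDec}(R)$ whenever $\omega\in\tau$ and $\xi\notin\mathrm{supp}\,\zeta_\tau$.

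The key point is a non-stationary phase argument in the $y$-integration. Using the reduced phase bounds from Lemma~\ref{reduction lemma} together with the scaling identity $\partial_{yy}^2\phi^\lambda=\lambda^{-1}\partial_{xx}^2\phi(y/\lambda;\omega)$, the Taylor expansion of $\partial_y\phi^\lambda$ around $(\bar x,\omega_\tau)$ yields, for $(y,\omega)\in B_{K^2}\times\tau$,
\[
\partial_y\phi^\lambda(y;\omega)=\partial_x\phi^\lambda(\bar x;\omega)+O(K^2/\lambda)=\Gamma_{h^\lambda_{\bar x}}(u_\tau)+[h^\lambda_{\bar x}]_{u_\tau}\cdot\big(\omega-\omega_\tau,\,O(K^{-2})\big)^\top+O(K^2/\lambda).
\]
Hence the critical set of $\xi$ values lies in the inner half of the parabolic neighborhood of $\Sigma[h^\lambda_{\bar x}]$ of dimensions $O(K^{-1})$ tangentially and $O(K^{-2})+O(K^2/\lambda)$ normally, which is absorbed by the $\varepsilon$-slack in the definition of $\theta(\tau)$ under $R^\varepsilon\lesssim K^2\leq\lambda$. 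Consequently, for $\xi\notin\mathrm{supp}\,\zeta_\tau$ one has the lower bound $|\partial_y\phi^\lambda(y;\omega)-\xi|\gtrsim K^{-2+c\varepsilon}$ uniformly on the support, and repeated integration by parts in $y$ (each step gaining $|\partial_y\phi^\lambda-\xi|^{-1}\lesssim K^{2-c\varepsilon}$ against the loss $\|\partial_y^N(a^\lambda\zeta_{B_{K^2}})\|_\infty\lesssim K^{-2N}$) produces the required $\mathrm{RapDec}(R)$ bound, provided the number of iterations is chosen below $N_{\mathrm{ex}}$.

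The pointwise weight $(1+|x-\bar x|)^{-(n+1)}$ is then obtained by a further integration by parts, this time in $\xi$, exploiting the oscillation $e^{2\pi i\langle x-\bar x,\xi\rangle}$ in the Fourier inversion of $\hat g\cdot(1-\zeta_\tau)$; alternatively, one can split $|x-\bar x|\lesssim K^{C}$ and $|x-\bar x|\gtrsim K^{C}$ (for $C$ large relative to $\varepsilon^{-1}$), handling the latter by direct spatial decay of the Schwartz-type kernel $\check\zeta_\tau$ together with the fact that $g$ is supported in $2B_{K^2}$.

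\textbf{Main obstacle.} The technical heart of the argument is confirming that the drift $O(K^2/\lambda)$ of $\partial_y\phi^\lambda(y;\omega)$ as $y$ ranges over $B_{K^2}$ is fully accommodated by the normal thickness $K^{-2(1-\varepsilon)}$ of $\theta(\tau)$ and, more delicately, that the resulting phase gradient lower bound for $\xi\notin\mathrm{supp}\,\zeta_\tau$ is strong enough (by a power of $R^{c\varepsilon}$) to turn iterated integration by parts into genuine $\mathrm{RapDec}(R)$ decay within the smoothness budget $N_{\mathrm{ex}}$. This is a numerological balancing between the scales $R$, $K$, $\lambda$, and $\varepsilon$, and where the hypothesis $R^\varepsilon\lesssim_\varepsilon K^2\leq\lambda$ is used decisively.
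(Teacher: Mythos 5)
Your overall architecture coincides with the paper's: both proofs represent $\big(T^{\lambda}_{B_{K^2}}f_{\tau}\big)\;\widehat{}\;(\xi)$ as an oscillatory integral in the spatial variable, obtain a lower bound on the phase gradient when $\xi$ lies outside the slab, convert iterated integration by parts (within the $N_{\mathrm{ex}}$ smoothness budget, using $K^2\gtrsim R^{\varepsilon}$) into $\mathrm{RapDec}(R)$, and recover the weight $(1+|x-\bar x|)^{-(n+1)}$ by differentiating in $\xi$ before inverting the Fourier transform. The difference lies in the step you yourself flag as the main obstacle, and there your argument as written would fail. You propose to absorb the drift $\partial_x\phi^{\lambda}(y;\omega)-\partial_x\phi^{\lambda}(\bar x;\omega)=O(K^2/\lambda)$, for $y$ ranging over $B_{K^2}$, into the normal thickness $K^{-2(1-\varepsilon)}$ of $\theta(\tau)$. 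Under the stated hypothesis $K^2\leq\lambda$ this drift can be of size comparable to $1$, which vastly exceeds $K^{-2+2\varepsilon}$ (and also the tangential width $K^{-1+\varepsilon}$), so the critical set of $\xi$ is not contained in $\theta(\tau)$ by this reasoning, and the asserted gradient lower bound $|\partial_y\phi^{\lambda}(y;\omega)-\xi|\gtrsim K^{-2+c\varepsilon}$ for $\xi\notin\supp\zeta_{\tau}$ does not follow.

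The paper separates the two issues that you have merged. First, its containment claim \eqref{freq loc 2} concerns only the frozen base point $\bar x$: the $K^{-2+\varepsilon}$-neighbourhood of $\{\partial_x\phi^{\lambda}(\bar x;\omega):\omega\in\tau\}$ lies in $\theta(\tau)$. This is verified by an explicit computation --- passing to the graph coordinate $u=(\Psi^{\lambda}_{\bar x})^{-1}(\omega)$, Taylor-expanding $h^{\lambda}_{\bar x}$ about $u_{\tau}$, and exhibiting $\eta\in[-1,1]^n$ with $\xi=[h^{\lambda}_{\bar x}]_{\theta(\tau)}\cdot\eta+\Gamma_{h^{\lambda}_{\bar x}}(u_{\tau})$ --- and this verification is the piece genuinely missing from your proposal (note also that the slab is parametrised by $u-u_{\tau}$, not by $\omega-\omega_{\tau}$, so the diffeomorphism $\Psi^{\lambda}_{\bar x}$ must be inserted). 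Second, the drift is not absorbed into the slab at all: after the rescaling $y=\bar x+K^2x$ the phase gradient is written as $K^2\big(\partial_x\phi^{\lambda}(\bar x;\omega)-\xi\big)$ plus a remainder, and for $\xi\notin\theta(\tau)$ the main term has size at least $K^2\cdot K^{-2+\varepsilon}=K^{\varepsilon}$, against which the drift enters only as a lower-order additive error in the non-stationary phase estimate rather than as a set the slab must contain. To repair your write-up, prove the containment claim at the fixed point $\bar x$ and then run the integration by parts with the drift appearing only in the gradient lower bound, not in the Fourier-support geometry.
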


Once this lemma is established, one may immediately apply Proposition~\ref{dec prop} to deduce the following (pseudo) variable coefficient decoupling inequality. 

\begin{corollary}\label{dec cor} Let $2 \leq d \leq n$, $0 \leq \sigma \leq n-1$ with $n-1-\sigma$ even and $\lambda \geq 1$. Suppose $T^{\lambda}$ is a H\"ormander-type operator with reduced phase of signature $\sigma$ and $V \subseteq \R^n$ is a $d$-dimensional linear subspace. For $2 \leq p \leq p_{\mathrm{dec}}(n,\sigma,d)$ and $\varepsilon > 0$ one has
\begin{align*}
\big\| \sum_{\tau \in V} T^{\lambda} g_{\tau} \big\|_{L^p(B_{K^2})} &\lesssim_{\varepsilon} K^{2e(n,\sigma,d)(1/2 - 1/p) + \varepsilon} \big(\sum_{\tau \in V} \| T^{\lambda} g_{\tau}\|_{L^p(2 \cdot B_{K^2})}^p \big)^{1/p} \\
& \qquad + \mathrm{RapDec}(R)\|f\|_{L^2(B^{n-1})}
\end{align*}
whenever $R^{\varepsilon} \lesssim_{\varepsilon} K^2 \leq \lambda$. Here the sums are over all caps $\tau$ for which $\angle(G^{\lambda}(\bar{x},\tau), V) \leq K^{-1}$ where $\bar{x}$ is the centre of $B_{K^2}$.
\end{corollary}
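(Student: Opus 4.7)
The plan is to reduce the variable coefficient statement to the constant coefficient decoupling of Proposition~\ref{dec prop} applied to the surface $\Sigma[h^{\lambda}_{\bar x}]$, where $\bar x$ is the centre of the localising ball $B_{K^2}$. First, replace $T^\lambda$ by its localisation $T^\lambda_{B_{K^2}}$ on the left-hand side: since $\zeta_{B_{K^2}}\equiv 1$ on $B_{K^2}$, we have $T^\lambda g_\tau = T^\lambda_{B_{K^2}}g_\tau$ on $B_{K^2}$, so
\begin{equation*}
\big\| \sum_{\tau\in V} T^\lambda g_\tau\big\|_{L^p(B_{K^2})} \leq \big\| \sum_{\tau\in V} T^\lambda_{B_{K^2}} g_\tau\big\|_{L^p(\R^n)}.
\end{equation*}
By Lemma~\ref{freq loc lemma}, each summand on the right satisfies $T^\lambda_{B_{K^2}}g_\tau = (T^\lambda_{B_{K^2}}g_\tau)*\check\zeta_\tau$ up to a $\mathrm{RapDec}(R)$ error with rapid spatial decay, and $\zeta_\tau$ is supported in the $K^{-(1-\varepsilon)}$-slab $\theta(\tau)$ on $\Sigma[h^\lambda_{\bar x}]$. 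Thus the Fourier support of each $T^\lambda_{B_{K^2}}g_\tau$ is essentially contained in $\theta(\tau)$.

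Next I would check that the slabs $\{\theta(\tau)\}$ indexed by the caps $\tau$ with $\angle(G^\lambda(\bar x,\omega_\tau),V)\leq K^{-1}$ fit into a $K^{-(1-\varepsilon)}$-slab decomposition on $\Sigma[h^\lambda_{\bar x}]$ along $V$. The key point is that $G^\lambda(\bar x,\omega)$ is parallel to the Gauss map $G_{h^\lambda_{\bar x}}(u)$ of $\Sigma[h^\lambda_{\bar x}]$ under the reparametrisation $\omega=\Psi^\lambda_{\bar x}(u)$; since $K^{-1}\leq K^{-(1-\varepsilon)}$, the angle condition from the corollary implies the angle condition required in the definition of a slab decomposition along $V$, with room to spare. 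The only nuisance is that the caps $\tau$ have radius $K^{-1}$ whereas the slabs $\theta(\tau)$ have tangential radius $K^{-(1-\varepsilon)}$, so the slabs can overlap by a factor of $K^{O(\varepsilon)}$. This is handled by choosing a maximal $K^{-(1-\varepsilon)}$-separated subcollection of the centres $\{u_\tau\}$ to obtain a genuinely finitely-overlapping slab decomposition $\{\alpha_j\}$, then grouping the original caps into bins according to which enlarged slab $C\alpha_j$ contains $\theta(\tau)$, setting $F_{\alpha_j}:=\sum_{\tau\in\mathrm{bin}(j)} T^\lambda_{B_{K^2}}g_\tau$.

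Now apply Proposition~\ref{dec prop} to the family $\{F_{\alpha_j}\}$ with $\delta = K^{-2(1-\varepsilon)}$ and the same $p$ in the admissible range $2\leq p\leq p_{\mathrm{dec}}(n,\sigma,d)$; this gives
\begin{equation*}
\big\|\sum_j F_{\alpha_j}\big\|_{L^p(\R^n)} \lesssim_\varepsilon K^{2e(n,\sigma,d)(1/2-1/p)+O(\varepsilon)}\Big(\sum_j \|F_{\alpha_j}\|_{L^p(\R^n)}^p\Big)^{1/p}.
\end{equation*}
Inside each bin there are only $O(K^{\varepsilon(n-1)})$ caps, so the triangle inequality followed by H\"older's inequality bounds $\|F_{\alpha_j}\|_{L^p}^p$ by $K^{O(\varepsilon)}\sum_{\tau\in\mathrm{bin}(j)}\|T^\lambda_{B_{K^2}}g_\tau\|_{L^p}^p$, and summing in $j$ collapses all bins back into the full sum over $\tau$. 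The $K^{O(\varepsilon)}$ loss is absorbed into the $\varepsilon$-term.

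Finally, unlocalise on the right: since $\zeta_{B_{K^2}}$ is Schwartz and concentrated on $B_{K^2}$, standard stationary phase (or just the rapid decay of $\check\zeta$) gives $\|T^\lambda_{B_{K^2}}g_\tau\|_{L^p(\R^n)}\leq \|T^\lambda g_\tau\|_{L^p(2B_{K^2})}+\mathrm{RapDec}(R)\|f\|_{L^2(B^{n-1})}$, and all the accumulated $\mathrm{RapDec}(R)$ error terms (from Lemma~\ref{freq loc lemma}, the regrouping, and the unlocalisation) combine into a single $\mathrm{RapDec}(R)\|f\|_{L^2(B^{n-1})}$ contribution using the hypothesis $R^\varepsilon\lesssim_\varepsilon K^2$. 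The main technical obstacle is the careful bookkeeping of the rapidly decaying tails through the regrouping step, particularly making sure the errors do not pick up powers of $\#\{\tau\}$ that cannot be absorbed; this is routine but requires checking that the implicit factors arising at each step are polynomial in $R$, so that they are annihilated by the rapid decay.
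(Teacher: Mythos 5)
Your proposal is correct and follows essentially the same route as the paper: localise via $T^{\lambda}_{B_{K^2}}$, use Lemma~\ref{freq loc lemma} to place the Fourier support of each piece in a slab $\theta(\tau)$ on $\Sigma[h^{\lambda}_{\bar{x}}]$, verify these form a slab decomposition along $V$, and apply Proposition~\ref{dec prop}. The only cosmetic difference is in handling the $K^{O(\varepsilon)}$-fold overlap of the slabs — you bin the caps into a separated subcollection, whereas the paper partitions the slabs into $O(K^{\varepsilon/10})$ finitely-overlapping subfamilies — but both devices cost only an absorbable $K^{O(\varepsilon)}$ factor.
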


\begin{proof} Defining the slabs $\theta(\tau)$ with $\varepsilon$ replaced with $\varepsilon':= \varepsilon/100n$ in \eqref{vc slabs}, the functions 
\begin{equation*}
   F_{\theta(\tau)} := \big(T^{\lambda}_{B_{K^2}}f_{\tau}\big) \ast \check{\zeta}_{\tau} 
\end{equation*}
 satisfy $\supp \hat{F}_{\theta(\tau)} \subseteq \theta(\tau)$. Recalling the discussion in \S\ref{basic geometry section}, the collection of slabs $\{\theta(\tau) : \tau \in V\}$ forms a  $K^{-(1-\varepsilon')}$-slab decomposition on $\Sigma_{\bar{x}}^{\lambda} = \Sigma[h_{\bar{x}}^{\lambda}]$ along $V$.\footnote{Strictly speaking, this is not quite true since the slabs have overlap depending on $K^{\varepsilon}$. However, since the collection can be partitioned into $O(K^{\varepsilon/10})$ finitely-overlapping subcollections, this only induces an acceptable $K^{\varepsilon/10}$ loss in the estimates.} Thus, for $p$ in the stated range, one may apply Proposition~\ref{dec prop} to deduce that
 \begin{equation*}
     \big\|\sum_{\tau \in V}F_{\theta(\tau)}\big\|_{L^p(\R^n)} \lesssim_{\varepsilon} K^{2e(n,\sigma,d)(1/2 - 1/p) + \varepsilon} \Big(\sum_{\tau \in V} \|F_{\theta(\tau)}\|_{L^p(\R^n)}^p \Big)^{1/p}.
 \end{equation*}
By transferring the $B_{K^2}$-localisation between the $L^p$-norm and the operator and applying the approximation from Lemma \ref{freq loc lemma}, the desired bound readily follows from the above display.
\end{proof}

It remains to prove the Fourier localisation lemma.

\begin{proof}[Proof (of Lemma~\ref{freq loc lemma})] Taking the Fourier transform, one may write 
\begin{equation*}
    \big(T^{\lambda}_{B_{K^2}}f_{\tau}\big)\;\widehat{}\;(\xi) = e^{-2\pi i \inn{\bar{x}}{\xi}}\int_{\R^{n-1}} B^{\lambda,K}_{\bar{x}}(\xi;\omega) f_{\tau}(\omega)\,\ud \omega
\end{equation*}
where the kernel $B^{\lambda,K}_{\bar{x}}$ satisfies
\begin{equation*}
   \partial_{\xi}^{\beta}B^{\lambda,K}_{\bar{x}}(\xi;\omega) := K^{2(n + |\beta|)} \int_{\R^n} e^{2\pi i \Phi^{\lambda,K}_{\bar{x}}(x;\xi;\omega)} a^{\lambda,K}_{\bar{x},\beta}(x;\omega)\,\ud x \qquad \textrm{for all $\beta \in \N_0^n$}
\end{equation*}
for phase function $\Phi^{\lambda,K}_{\bar{x}}$ and amplitude $a^{\lambda,K}_{\bar{x},\beta}$ given by
\begin{align*}
   \Phi^{\lambda,K}_{\bar{x}}(x;\xi;\omega) &:= \phi^{\lambda}(\bar{x}+K^2x;\omega) - K^2\inn{x}{\xi}, \\
   a^{\lambda,K}_{\bar{x},\beta}(x;\omega) &:= (2\pi i x)^{\beta}\cdot \zeta(x)\cdot a^{\lambda}(\bar{x} + K^2x;\omega).  
\end{align*}
Note that $a^{\lambda,K}_{\bar{x},\beta}$ is supported in $B^n \times B^{n-1}$ and, by the condition $K^2 \leq \lambda$, has derivatives uniformly bounded in $K$ and $\lambda$. On the other hand, 
\begin{equation}\label{freq loc 1}
    \partial_{x}  \Phi^{\lambda,K}_{\bar{x}}(x;\xi;\omega) = K^2\big( \partial_x\phi^{\lambda}(\bar{x};\omega) - \xi\big) + \big(\partial_x\phi^{\lambda}\big(\bar{x}+ K^2x;\omega\big) - \partial_x\phi^{\lambda}(\bar{x};\omega)\big),
\end{equation}
where the second term on the right-hand side is bounded above in magnitude by a constant depending only on the second derivatives of $\phi$. The key claim is that 
\begin{equation}\label{freq loc 2}
  \big\{\xi \in \hat{\R}^n: \big| \xi - \partial_x\phi^{\lambda}(\bar{x};\omega)\big| \leq K^{-2+\varepsilon} \textrm{ for some $\omega \in \tau$}\big\} \subseteq \theta(\tau). 
\end{equation}
Once this is established, one may bound the first term on the right-hand side of \eqref{freq loc 1} under appropriate hypotheses on $\omega$ and $\xi$ and, in particular, show that
\begin{equation*}
    |\partial_{x}  \Phi^{\lambda,K}_{\bar{x}}(x;\xi;\omega)| \gtrsim K^{\varepsilon} \qquad \textrm{for all $\omega \in \tau$ and $\xi \notin \theta(\tau)$.}
\end{equation*}
On the other hand, $|\partial_{x}^{\alpha}  \Phi^{\lambda,K}_{\bar{x}}(x;\xi;\omega)| \lesssim_{\alpha} 1$ for all $\alpha \in \N^n$ with $|\alpha| \geq 2$ and, thus, repeated integration-by-parts yields
\begin{equation*}
 \sup_{\omega \in \tau}\big| \partial_{\xi}^{\beta}\big[\big(1-\zeta_{\tau}(\xi)\big)B^{\lambda,K}_{\bar{x}}(\xi;\omega)\big]\big| \lesssim_{\beta,N} K^{-N} (1 + |\xi|)^{-(n+1)}  \qquad \textrm{for all $\beta \in \N_0^n$, $N \in \N$.}
\end{equation*}
From this it follows that
\begin{equation*}
 \big| \partial_{\xi}^{\beta}\big[e^{2\pi i \inn{\bar{x}}{\xi}}\big(T^{\lambda}_{B_{K^2}}f_{\tau} - T^{\lambda}_{B_{K^2}}f_{\tau} \ast \check{\zeta}_{\tau}\big)\;\widehat{}\;(\xi)\big]\big| \lesssim_{\beta,N} K^{-N} (1 + |\xi|)^{-(n+1)}\|f\|_{L^2(B^{n-1})}
\end{equation*}
 and the desired identity \eqref{freq loc eq} follows by taking inverse Fourier transforms and using repeated integration-by-parts to obtain the desired decay in the spatial variable. 

It remains to prove \eqref{freq loc 2}. Suppose $\xi \in \hat{\R}^n$ and $\omega \in \tau$ satisfy 
\begin{equation}\label{freq loc 3}
    | \xi - \partial_x\phi^{\lambda}(\bar{x};\omega)| \leq K^{-2+\varepsilon} 
\end{equation}
and let $u := \big(\Psi_{\bar{x}}^{\lambda} \big)^{-1}(\omega)$. Since $\big(\Psi_{\bar{x}}^{\lambda}\big)^{-1}$ is a diffeomorphism with bounded Jacobian, the condition $\omega \in \tau$ translates to $|u - u_{\tau}| \lesssim K^{-1}$, whilst \eqref{freq loc 3} implies that
\begin{equation}\label{freq loc 4}
    |\xi' - u| \leq K^{-2 + \varepsilon} \qquad \textrm{and} \qquad |\xi_n - h_{\bar{x}}^{\lambda}(u)| \leq K^{-2 + \varepsilon}
\end{equation}
Let $\eta = (\eta',\eta_n) \in \hat{\R}^n$ be given by 
\begin{equation*}
    \eta' := K^{1-\varepsilon}\big(\xi' - u_{\tau}\big), \qquad
    \eta_n := K^{2(1-\varepsilon)}\big(\xi_n - h^{\lambda}_{\bar{x}}(u_{\tau}) - K^{-1+\varepsilon}  \inn{\partial_uh_{\bar{x}}^{\lambda}(u_{\tau})}{\eta'}\big),
\end{equation*}
so that, in particular, $\eta$ satisfies 
\begin{equation*}
  \xi = [h_{\bar{x}}^{\lambda}]_{\theta(\tau)}\cdot \eta + \Gamma(h_{\bar{x}}^{\lambda})(u_{\tau}).
\end{equation*}
The last step is to show that $\eta \in [-1,1]^n$; indeed, once this is established it follows from the definitions that $\xi \in \theta(\tau)$, as required. It is clear from the earlier discussion that $\eta' \in [-1,1]^{n-1}$ and so matters are further reduced to showing $\eta_n \in [-1, 1]$. By Taylor's theorem,
\begin{align*}
    \xi_n - h_{\bar{x}}^{\lambda}(u) &= \xi_n - h_{\bar{x}}^{\lambda}(u_{\tau}) - \inn{\partial_uh_{\bar{x}}^{\lambda}(u_{\tau})}{u - u_{\tau}} + O(K^{-2}) \\
     &= \xi_n - h_{\bar{x}}^{\lambda}(u_{\tau}) - K^{-1+\varepsilon}\inn{\partial_uh_{\bar{x}}^{\lambda}(u_{\tau})}{\eta'} + O(K^{-2+\varepsilon}),
\end{align*}
where the second identity follows by writing $u - u_{\tau} = u - \xi' + K^{-1+\varepsilon}\eta'$ and using the first inequality in \eqref{freq loc 4}. Provided $K$ is sufficiently large, the result now follows by multiplying through by $K^{2(1-\varepsilon)}$ and applying the second inequality in \eqref{freq loc 4}. 

\end{proof}




\section{Proof of Theorem~\ref{main theorem}: from $k$-broad to linear estimates}\label{broad/narrow sec}




 Theorem~\ref{main theorem} may now be deduced as a consequence of the $k$-broad estimates from Theorem~\ref{k-broad theorem} and the decoupling inequality from Corollary~\ref{dec cor} via the method of \cite{Bourgain2011}. For the exponent $e(n,\sigma,d)$ as defined in \eqref{decoupling exponent}, the key proposition is as follows.

\begin{proposition}\label{proposition Bourgain Guth} Suppose that for all $K \geq 1$ and all $\varepsilon > 0$ any H\"ormander-type operator $T^{\lambda}$ with reduced phase of signature $\sigma$ obeys the $k$-broad inequality
\begin{equation*}
\|T^{\lambda}f\|_{\mathrm{BL}_{k,A}^p(B(0,R))} \lesssim_{\varepsilon} K^{C_{\varepsilon}} R^{\varepsilon} \|f\|_{L^p(B^{n-1})}
\end{equation*}
for some fixed $k, A, p, C_{\varepsilon}$ and all $R \geq 1$. If  
\begin{equation}\label{Bourgain--Guth exponents}
2 \cdot \frac{n - e(n,\sigma,k-1)}{n - 1 - e(n,\sigma,k-1)} \leq p \leq p_{\mathrm{dec}}(n,\sigma,k-1),
\end{equation}
then any H\"ormander-type operator $T^{\lambda}$ with reduced phase of signature $\sigma$ satisfies
\begin{equation*}
\|T^{\lambda}f\|_{L^p(B(0,R))} \lesssim_{\varepsilon} R^{\varepsilon} \|f\|_{L^p(B^{n-1})}. 
\end{equation*}
\end{proposition}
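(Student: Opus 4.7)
The plan is to run the Bourgain--Guth broad/narrow iteration on the radius $R$ exactly as in \cite{Bourgain2011, Guth2018, GHI2019}, with the signature-sensitive Corollary~\ref{dec cor} playing the r\^ole of the positive-definite decoupling inequality used in the maximal signature case. Let
\begin{equation*}
Q_p(R) := \sup \big\{ \|T^{\lambda}f\|_{L^p(B(0,R))}/\|f\|_{L^p(B^{n-1})} \big\},
\end{equation*}
the supremum being over all reduced-phase H\"ormander-type operators of signature $\sigma$ with $\lambda \geq R$ and all admissible $f$. The goal is to establish a recursive inequality of the schematic form
\begin{equation*}
Q_p(R) \leq C_{\varepsilon} K^{C_{\varepsilon}} R^{\varepsilon/2} + K^{-\eta}\, Q_p(R/K^2) \qquad (\eta > 0)
\end{equation*}
whose iteration $O_K(\log R)$ times, starting from the trivial base case $R \sim 1$, yields the claimed bound $Q_p(R) \lesssim_{\varepsilon} R^{\varepsilon}$.

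To set up the recursion I would fix $K = K(\varepsilon)$ large, cover $B(0,R)$ by balls $B_{K^2}$ of radius $K^2$, and decompose $B^{n-1}$ into $K^{-1}$-caps $\tau$, writing $f = \sum_{\tau} f_{\tau}$ via a smooth partition of unity. For each $B_{K^2}$ with centre $\bar{x}$ there are two cases, governed by the distribution of the Gauss directions $G^{\lambda}(\bar{x};\omega_{\tau})$ of those caps $\tau$ contributing significantly to $T^{\lambda}f$ on $B_{K^2}$. In the \emph{broad} case, these directions span (after $K^{-1}$-thickening) a full $k$-dimensional subspace; here the standard pointwise comparison $|T^{\lambda}f(x)| \lesssim K^{C}\|T^{\lambda}f\|_{\mathrm{BL}^p_{k,A}(B_{K^2})}$ holds, so the hypothesised $k$-broad estimate controls this contribution by $C_{\varepsilon} K^{C_{\varepsilon}}R^{\varepsilon}\|f\|_{L^p}$ upon summation over $B_{K^2}$. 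In the \emph{narrow} case, all significant caps satisfy $\angle(G^{\lambda}(\bar{x};\omega_{\tau}), V) \leq K^{-1}$ for some $(k-1)$-dimensional subspace $V$, placing us precisely in the setting of Corollary~\ref{dec cor} with $d = k-1$. Applying that corollary transfers the $L^p$-mass, at a cost of $K^{2e(n,\sigma,k-1)(1/2 - 1/p)+\varepsilon}$, to an $\ell^p$-sum of $\|T^{\lambda}f_{\tau}\|_{L^p(2B_{K^2})}$.

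Each surviving $T^{\lambda}f_{\tau}$ is then handled by parabolic rescaling, exactly as in \cite[\S 11]{GHI2019}: an affine change of variables reparametrises it as $\widetilde{T}^{\lambda/K^2}\tilde{f}_{\tau}$, a reduced-phase H\"ormander-type operator of the same signature $\sigma$ on a ball of radius $R/K^2$, contributing a factor of $Q_p(R/K^2)$ together with standard $L^p$ Jacobian weights. Combining the two cases, summing over the cover, and exploiting the $L^p$-almost-orthogonality of the $\{f_{\tau}\}$ produces the advertised recursion; the surplus exponent $\eta > 0$ appears precisely when $p$ satisfies the lower bound in \eqref{Bourgain--Guth exponents}, balancing the decoupling loss $e(n,\sigma,k-1)$ against the scale gain from $R \mapsto R/K^2$, while the upper bound in \eqref{Bourgain--Guth exponents} is what makes Corollary~\ref{dec cor} applicable in the first place. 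The main obstacle is the exponent bookkeeping that produces $\eta > 0$ with the correct dependence on $n, \sigma, k$: this is essentially the computation carried out in \cite[\S 11]{GHI2019} for the positive-definite case, with the exponent $(n-1)/2$ there replaced throughout by $e(n,\sigma,k-1)$. Once $K$ is chosen sufficiently large in $\varepsilon$ to absorb the $K^{C_{\varepsilon}}$ broad-case losses into $R^{\varepsilon}$, iteration closes the induction.
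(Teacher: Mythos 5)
Your proposal is correct and follows essentially the same route as the paper: the paper's proof consists precisely of invoking the Bourgain--Guth induction-on-scales argument of \cite[Proposition 11.2]{GHI2019}, with Corollary~\ref{dec cor} substituted for the decoupling input used there and the corresponding changes to the numerology, which is exactly the broad/narrow decomposition, rescaling and recursion you describe. Your sketch in fact supplies more of the mechanism (the definition of $Q_p(R)$, the two cases, and the r\^ole of each inequality in \eqref{Bourgain--Guth exponents}) than the paper itself records.
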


Here $p_{\mathrm{dec}}(n,\sigma,d)$ denotes the decoupling exponent defined in \eqref{dec Leb exp}. 

\begin{remark} In the positive-definite case, $\sigma = n - 1$ and 
\begin{equation*}
   e(n,n-1,k-1) = \frac{k-2}{2}, \qquad p_{\mathrm{dec}}(n,n-1,k-1) =  2 \cdot \frac{k}{k-2}
\end{equation*}

Thus, the condition \eqref{Bourgain--Guth exponents} becomes
\begin{equation*}
    2 \cdot \frac{2n- k + 2}{2n - k} \leq p \leq 2 \cdot \frac{k}{k-2} 
\end{equation*}
This is consistent with \cite[Proposition 9.1]{Guth2018} and \cite[Proposition 11.1]{GHI2019}.\footnote{In the references a more restrictive upper bound of $2 \cdot \frac{k-1}{k-2}$ appears rather than $2 \cdot \frac{k}{k-2}$. This is due to the use of non-endpoint decoupling inequalities in \cite{Guth2018, GHI2019}, which are in fact sufficient for the present purpose.} 
\end{remark}

Theorem~\ref{main theorem} is now a direct consequence of Proposition~\ref{proposition Bourgain Guth} and Theorem~\ref{k-broad theorem}. 

\begin{proof}[Proof (of Theorem~\ref{main theorem})] For each $k$ satisfying the constraint 
\begin{equation*}
2 \cdot \frac{n - e(n,\sigma,k-1)}{n - 1 - e(n,\sigma,k-1)} \leq \bar{p}(n,\sigma, k) 
\end{equation*}
one may apply Proposition~\ref{proposition Bourgain Guth} with $\bar{p}(n,\sigma, k) \leq  p \leq p_{\mathrm{dec}}(n,\sigma,k-1)$ to obtain a (potentially empty) range of estimates for the linear problem. It is not difficult to check that the optimal choice is given by 
\begin{equation*}
 k_* := \left\{ \begin{array}{ll}
    \frac{n+2}{2} & \textrm{for $n$ even} \\[5pt]
    \frac{n+1}{2} & \textrm{for $n$ odd}
 \end{array} \right.   
\end{equation*} 
and one may readily verify that $\bar{p}(n,\sigma, k_*) \leq p_{\mathrm{dec}}(n,\sigma,k_*-1)$. Thus, the linear estimate holds for all $p \geq \bar{p}(n,\sigma, k_*)$. This corresponds to the range of estimates stated in Theorem~\ref{main theorem}. 
 \end{proof}

\begin{proof}[Proof (of Proposition~\ref{proposition Bourgain Guth})] The proof of Proposition~\ref{proposition Bourgain Guth} relies on the induction-on-scales argument originating in \cite{Bourgain2011}. The details are identical to those of the proof of \cite[Proposition 11.2]{GHI2019} except that Corollary~\ref{dec cor} is now used in place of \cite[Theorem 11.5]{GHI2019}, and there are corresponding changes to the numerology. The reader is therefore referred to \cite{GHI2019} (see also \cite{Guth2018}) for the details.
\end{proof}




\bibliography{Reference}
\bibliographystyle{amsplain}

\end{document}